\newtheorem{theorem}{Theorem}[section]
\newtheorem{lemma}[theorem]{Lemma}
\newtheorem{proposition}[theorem]{Proposition}
\newtheorem{remark}[theorem]{Remark}
\theoremstyle{definition}
\newtheorem{definition}[theorem]{Definition}
\newcommand {\coker}{\mathrm{coker}}
\newcommand {\gr}{\mathrm{gr}}
\newcommand {\spec}{\mathrm{spec}}
\newcommand {\Hom}{\mathcal{H}\kern -0.25ex{\mathit om\/}}
\newcommand {\Ext}{\mathcal{E}\kern -0.25ex{\mathit xt\/}}
\newcommand {\ext}{\mathrm{Ext}}
\newcommand {\im}{\mathrm{im}}
\newcommand {\rk}{\mathrm{rk}}
\newcommand {\End}{\mathrm{End}}
\newcommand {\Hilb}{\mathcal{H}\kern -0.25ex{\mathit ilb\/}}
\newcommand {\PGL}{\mathrm{PGL}}
\newcommand {\fC}{\mathfrak{C}}
\newcommand {\fE}{\mathfrak{E}}
\newcommand {\fX}{\mathfrak{X}}
\newcommand {\bZ}{\mathbb{Z}}
\newcommand {\bP}{\mathbb{P}}
\newcommand{\cE}{{\mathcal E}}
\newcommand{\cF}{{\mathcal F}}
\newcommand{\cM}{{\mathcal M}}
\newcommand{\cN}{{\mathcal N}}
\newcommand{\cO}{{\mathcal O}}
\newcommand{\cK}{{\mathcal K}}
\newcommand{\cQ}{{\mathcal Q}}
\newcommand{\cI}{{\mathcal I}}
\newcommand{\calH}{{\mathcal H}}
\newcommand{\calL}{{\mathcal L}}
\newcommand{\Pic}{\operatorname{Pic}}
\def\p#1{{\bP^{#1}}}
\def\ga#1{{{\accent"12 #1}}}
\title[Moduli spaces of rank two aCM vector bundles]{MODULI SPACES OF RANK TWO ACM BUNDLES
ON THE SEGRE PRODUCT OF THREE PROJECTIVE LINES}
\subjclass[2000]{Primary 14J60; Secondary 14J45, 14D20}
\keywords{}
\author[G. Casnati, D. Faenzi, F. Malaspina]{Gianfranco Casnati, Daniele Faenzi, Francesco Malaspina}
\thanks{All the authors are members of GRIFGA--GDRE project, supported by CNRS
  and INdAM, and of the GNSAGA group of INdAM. The first and third authors are supported by the framework of PRIN 2010/11 \lq Geometria delle variet\ga a algebriche\rq, cofinanced by MIUR. The second author is partially supported by ANR GEOLMI contract ANR-11-BS03-0011}
\begin{document}

\sloppy

\begin{abstract}
Let $F\subseteq\p7$ be the image of the Segre embedding of $\p1\times\p1\times\p1$. In the present paper we deal with the moduli spaces of locally free sheaves $\cE$ of rank $2$ with  $h^i\big(F,\cE(t)\big)=0$ for $i=1,2$ and $t\in\bZ$, on $F$.
\end{abstract}

\maketitle

\section{Introduction}
Let $\p N$ be the projective space of dimension $N$ over an algebraically closed field $k$ of characteristic $0$. If $F\subseteq\p N$ is an $n$--dimensional projective variety, i.e. an integral connected closed subscheme, we set $\cO_F(h):=\cO_{\p N}(1)\otimes\cO_F$.
We say that $F$ is arithmetically Cohen--Macaulay (aCM for short) if the natural restriction maps $H^0\big(\p
  N,\cO_{\p N}(t)\big)\to H^0\big(F,\cO_{F}(th)\big)$ are surjective
  and $H^i\big(F,\cO_{F}(th)\big)=0$, $1\le i\le n-1$. 
A vector bundle (i.e., a locally free sheaf) $\cE$ over such an $F$ is called aCM if all
the intermediate cohomology groups of $\cE$ vanishes, namely if $H^i\big(F,\cE(t h)\big)=0$ for $0<i<n$ and $t\in\bZ$.
 
If $F$ is just $\p n$, then a
well--known theorem of Horrocks (see \cite{O--S--S} and references
therein) states $\cE$ is aCM if and only if $\cE$ splits as direct sum of invertible sheaves.

When $F$ is a smooth quadric hypersurface Kn\"orrer's theorem
  (see \cite{Kno}) asserts that an indecomposable
aCM bundle $\cE$ on $F$ is either  $\cO_F$ or a spinor bundle, up to twists by multiples of $\cO_F(h)$ (see \cite{Ot} for the definition of spinor bundles on $F$ and its properties).

The case of hypersurfaces of higher degree is very interesting.
Indeed, an important theorem of \cite{B--G--S}
states that such an $F$ supports infinitely many
isomorphism classes of indecomposable aCM bundles.
These families have been studied by many authors: see for instance
  \cite{C--H1}.

Another interesting direction is to look at Fano varieties
i.e. smooth varieties such that the anticanonical sheaf $\omega_F^{-1}$ is
ample (see \cite{I--P} for a review about Fano varieties). The greatest positive integer $r$ such that
$\omega_F\cong\mathcal L^{-r}$ for some ample $\mathcal L\in\Pic(F)$ is called
the index of $F$. It is known that $1\le r\le n+1$ and
$r=n+1$ (resp. $r=n$) if and only if $F=\p n$ (resp. $F$ is a
smooth quadric hypersurface). This case is settled by the theorem of Horrocks (resp. Kn\"orrer).

Let us look at the next case
$r=n-1$. In this case $F$ is called a del Pezzo variety. Let $\mathcal L$ be very ample on $F$ and consider the corresponding embedding $F\subseteq\p{N}$.  Then $3\le \deg(F)\le 8$ and we know that  such an $F$ is also of \lq\lq almost minimal degree\rq\rq. Indeed
$\deg(F)=N-n+2$.

According to Eisenbud--Herzog classification theorem (see \cite{E--H}) $n$--dimensional non--degenerate subvarieties of $\p N$ supporting
only finitely many indecomposable aCM bundles (up to twist and
isomorphism) all have minimal degree $N-n+1$ (although not all varieties of
minimal degree have this property, see \cite{F--M} for a detailed treatment).
So again del Pezzo manifolds seem to be one of the most interesting
benchmarks to study aCM bundles.
Some results on vector bundles on del Pezzo surfaces are known (e.g. see \cite{C--K--M}, \cite{Fa1}). We focus our attention on the case $n=3$,
i.e. the case of threefolds. The first 
non--trivial bundles appear in rank two and we will particularly study this case.

When the Picard number $\varrho(F)$ is $1$, a complete classification of indecomposable aCM
bundles of rank $2$ on $F$ has been given by E. Arrondo and L. Costa
(see \cite{A--C}) using the so--called Hartshorne--Serre
correspondence between vector bundles of rank $2$ and subvarieties of
codimension $2$ satisfying an extra technical condition (see
\cite{Vo}, \cite{Ha1}, \cite{Ar} for details on such a
construction). More precisely they showed that if one twists such a
bundle $\cE$ by $\cO_F(th)$ in such a way that
$h^0\big(F,\cE\big)\ne0$ and $h^0\big(F,\cE(-h)\big)=0$ (we briefly
say that $\cE$ is initialized) and $c_1(\cE)=c_1h$, then $0\le c_1\le
2$ and it is possible to characterize $\cE$ in terms of the
zero--locus of a general section in $H^0\big(F,\cE\big)$. 

{It is also natural to analyze aCM bundles in terms of semistability and $\mu$--semistability (see \cite{H--L} as a reference for semistable bundles and their moduli spaces).
For del Pezzo threefolds with $\varrho(F)=1$, it is possible to show the following facts.
\begin{itemize}
\item If $c_1=0$, then $\cE$ is never semistable (though $\mu$--semistable).
\item If $c_1=1$, then $\cE$ is stable: M Szurek and J. Wi\'sniewski proved in \cite{S--W} that the corresponding moduli space is an irreducible projective variety of dimension $5-d$.
\item If $c_1=2$, then again $\cE$ is stable: moreover the corresponding moduli space was proved to be irreducible and of dimension $5$ by S. Druel when $d=3$ in \cite{Dr}, by A. Kuznetsov and by D. Faenzi independently when $d=4, 5$ in \cite{Kut} and \cite{Fa2} respectively.
\end{itemize}

When $\varrho(F)>1$ the only known results are due to the authors of the present paper when $\varrho(F)=3$ (see \cite{C--F--M}). In this case $F$ is exactly the image inside $\p7$ of the Segre embedding of $\p1\times\p1\times\p1$. Denote by $\pi_i\colon F\to \p1$ the $i^{\mathrm {th}}$--projection and let $\cO_F(h_i):=\pi_i^*\cO_{\p1}(1)$: then the intersection ring $A(F)$ of $F$ is isomorphic to $A(\p1)\otimes A(\p1)\otimes A(\p1)$  (see
\cite{Fu}, Example 8.3.7). In particular
$$
A(F)\cong\bZ[h_1,h_2,h_3]/(h_1^2,h_2^2,h_3^2).
$$
Now let $\cE$ be  an indecomposable, initialized, aCM bundle of rank $2$ on $F$ and set $c_1(\cE)=\alpha_1h_1+\alpha_2h_2+\alpha_3h_3$. In the aforementioned paper it is proved that, up to permutations of $h_i$'s, only the following cases are possible (and actually occur) for $(\alpha_1,\alpha_2,\alpha_3)$:
$$
(0,0,0),\qquad (0,0,1), \qquad (2,2,1),\qquad (1,2,3),\qquad (2,2,2).
$$
Notice that there exist initialized, aCM bundles $\cE$ of rank $2$ on $F$ with $\det(\cE)=\cO_F(h)$, but they are always decomposable as $\cO_F(h_1+h_2)\oplus\cO_F(h_3)$, up to permutations of the $h_i$'s, thus they are not $\mu$--semistable (or, in a more suggestive form inspired by the lower degree cases, the locus of such bundles has dimension $-1=5-6$).

The aim of the present paper is to construct and describe the moduli spaces of such bundles in the above cases, when they are semistable. We are able to prove the following statement in Section \ref{sRank}.

\medbreak
\noindent
{\bf Theorem A.}
{\it Let $\cE$ be an indecomposable, initialized, aCM bundle of rank $2$ on $F$ and let $c_1(\cE)=\alpha_1h_1+\alpha_2h_2+\alpha_3h_3$. 
Then the following assertions hold.
  \begin{enumerate}
  \item If $(\alpha_1,\alpha_2,\alpha_3)=(0,0,0)$, then $\cE$ is strictly $\mu$--semistable.
  \item If $(\alpha_1,\alpha_2,\alpha_3)$ is either $(0,0,1)$ or $(2,2,1)$, then $\cE$ is $\mu$--stable.
  \item If $(\alpha_1,\alpha_2,\alpha_3)=(2,2,2)$, then $\cE$ is $\mu$--stable, unless it fits into an exact sequence of the form
\begin{equation*}
0\longrightarrow\cO_F(2h_1+2h_2+h_3-2h_i)\longrightarrow \cE\longrightarrow\cO_F (2h_i+h_3)\longrightarrow0,
\end{equation*}
where $i=1,2$, in which case is strictly semistable.
  \item If $(\alpha_1,\alpha_2,\alpha_3)=(1,2,3)$, then $\cE$ is $\mu$--stable, unless it fits into an exact sequence of the form
\begin{equation*}
0\longrightarrow\cO_F(h_1+2h_3)\longrightarrow \cE\longrightarrow\cO_F (2h_2+h_3)\longrightarrow0,
\end{equation*}
in which case is strictly semistable.
  \end{enumerate}}
  \medbreak

In view of the above theorem, it is interesting to understand the structure of the moduli spaces corresponding to semistable bundles. To this purpose we first need to prove their existence. In \cite{C--F--M} also the second Chern class of the bundle is computed. We obtain the following results.
\begin{itemize}
\item If $(\alpha_1,\alpha_2,\alpha_3)=(0,0,1)$, then $c_2(\cE)$ is either $h_2h_3$, or $h_1h_3$.
\item If $(\alpha_1,\alpha_2,\alpha_3)=(2,2,1)$, then $c_2(\cE)$ is either $h_2h_3+2h_1h_3+2h_1h_2$, or $2h_2h_3+h_1h_3+2h_1h_2$.
\item If $(\alpha_1,\alpha_2,\alpha_3)=(1,2,3)$, then either $c_2(\cE)=4h_2h_3+h_1h_3+2h_1h_2$ or $c_2(\cE)=3h_2h_3+3h_1h_3+h_1h_2$.
\item If $(\alpha_1,\alpha_2,\alpha_3)=(2,2,2)$, then, up to permutations of the $h_i$'s, either $c_2(\cE)=2h_2h_3+3h_1h_3+3h_1h_2$ or $c_2(\cE)=2h_2h_3+2h_1h_3+4h_1h_2$.
\end{itemize}

We first construct,  in Section \ref{sHilbert}, the Hilbert schemes of curves inside $F$ associated to such bundles via the aforementioned Harshorne--Serre correspondence. Then we show how to define a universal family parameterizing such bundles in Section \ref{sRelative} by using a relative version of the Hartshorne--Serre correspondence.

Finally, in Sections \ref{sModuliUlrich},  \ref{sModuliLine} and \ref{sRational}, we use such a family to show the irreducibility and unirationality of the corresponding moduli spaces. We can roughly summarize what we are able to prove in the following statement.

\medbreak
\noindent
{\bf Theorem B.}
{\it Let $c_1:=\alpha_1h_1+\alpha_2h_2+\alpha_3h_3$ and $c_2:=\beta_1h_2h_3+\beta_2h_1h_3+\beta_3h_1h_2$. If $(\alpha_1,\alpha_2,\alpha_3)$ is one of the following
$$
(0,0,1), \qquad (2,2,1),\qquad (1,2,3),\qquad (2,2,2),
$$
then the moduli space $\mathcal M(c_1,c_2)$  of indecomposable, initialized, aCM semistable bundles $\cE$ of rank $2$ with $c_1(\cE)=c_1$ and $c_2(\cE)=c_2$ exists and it is irreducible. Moreover the following assertions hold.
\begin{enumerate}
\item Let $(\alpha_1,\alpha_2,\alpha_3)$ be either $(0,0,1)$ or $(2,2,1)$. 
Then $\mathcal M(c_1,c_2)\cong\p1$
\item Let $(\alpha_1,\alpha_2,\alpha_3)=(1,2,3)$. 

The moduli space $\mathcal M(c_1,4h_2h_3+h_1h_3+2h_1h_2)$ is a single point, representing the equivalence class of all the strictly semistable bundles with such a $c_1$. 

The moduli space $\mathcal M(c_1,3h_2h_3+3h_1h_3+h_1h_2)$ is smooth and unirational of dimension $3$: its  points correspond to stable bundles.
\item Let $(\alpha_1,\alpha_2,\alpha_3)=(2,2,2)$. 

The moduli space $\mathcal M(c_1,2h_2h_3+2h_1h_3+4h_1h_2)$ is generically smooth and rational of dimension $5$: its general point corresponds to a stable bundle and it also contains exactly one point representing the equivalence class of all the strictly semistable bundles with such a $c_1$.  

The moduli space $\mathcal M(c_1,2h_2h_3+3h_1h_3+3h_1h_2)$ is smooth and unirational of dimension $5$: its  points correspond to stable bundles.
\end{enumerate}}
\medbreak

\section{aCM and semistable bundles on $F$}
\label{sGeneral}
If $\cE$ is an aCM bundle, then the minimal number of
generators of $H^0_*\big(F,\cE\big)$ as a module over the
graded coordinate ring of $F$ is $\rk(\cE)\deg(F)$ at
most (e.g. see \cite{C--H1}). For the above reason we introduce the following definition (see  \cite{C--H2}, Definition 2.1 and Lemma 2.2: see also \cite{C--H1}, Definition 3.4
which is slightly weaker).

\begin{definition}
  Let $\cE$ be a vector bundle on $F$.
  We say that $\cE$ is {\sl initialized} if 
  $$
  \min\{\ t\in\bZ\ \vert\ h^0\big(F,\cE(t h)\big)\ne0\ \}=0.
  $$
  We say that $\cE$ is {\sl Ulrich} if it is initialized, aCM and
  $h^0\big(F,\cE\big)=\rk(\cE)\deg(F)$. 
\end{definition}

Notice that Ulrich bundles are globally generated by definition.

Let us now recall some notions of stability of vector bundles.
First we define the slope $\mu(\cE)$ and the reduced Hilbert
polynomial $p_{\cE}(t)$ of a bundle $\cE$ over $F$ as:
$$
\mu(\cE)= c_1(\mathcal E)h^2/\rk(\mathcal E), \qquad p_{\cE}(t)=\chi(\cE(th))/\rk(\cE).
$$
The bundle $\cE$ is called $\mu$--semistable (with respect to
$h$) if for all non--zero torsion--free proper quotient bundles
$\mathcal G$ we have
$$
\mu(\mathcal G) \ge \mu(\cE),
$$
and $\mu$--stable if equality cannot hold in the above inequality.
On the other hand, $\cE$ is said to be semistable (or, more precisely,
Gieseker--semistable with respect to $h$) if for all $\mathcal G$ as above one has
$$
p_{\mathcal G}(t) \ge  p_{\cE}(t),
$$
and (Gieseker) stable again if equality cannot hold in the above inequality.

Let $\cE$ be a vector bundle on $F$ of rank $r$ with Hilbert polynomial $\chi(t):=\chi(\mathcal E(th))$.
Recall that there exists the coarse moduli spaces $\cM_F^{ss}(\chi)$ parameterizing $S$--equivalence classes of semistable rank $r$ torsion free coherent sheaves on $F$ with Hilbert polynomial $\chi(t)$ (see Section 1.5 of \cite{H--L} for details about $S$--equivalence of bundles). We will denote by $\cM_F^{s}(\chi)$ the open locus inside $\cM_F^{ss}(\chi)$ of stable bundles.

The scheme $\cM_F^{ss}(\chi)$ is the disjoint union of open and closed subsets $\cM_F^{ss}(r;c_1,\dots,c_r)$ whose points represent $S$-equivalence classes of semistable rank $r$ torsion free coherent sheaves with fixed Chern classes $c_i\in A^i(F)$ ($A^i(F)$ denotes the degree $i^{\mathrm {th}}$ component of the intersection ring $A(F)$ of $F$). Similarly $\cM_F^{s}(\chi)$ is the disjoint union of open and closed subsets $\cM_F^{s}(r;c_1,\dots,c_r)$.

By semicontinuity we can define open loci $\cM_F^{ss,aCM}(\chi)(r;c_1,\dots,c_r)\subseteq \cM_F^{ss}(\chi)(r;c_1,\dots,c_r)$ and $\cM_F^{s,aCM}(\chi)(r;c_1,\dots,c_r)\subseteq \cM_F^{s}(\chi)(r;c_1,\dots,c_r)$ parameterizing respectively $S$--equivalence classes of semistable and stable aCM bundles of rank $r$ on $F$ with Chern classes $c_1,\dots,c_r$.

The case of Ulrich bundles is particularly interesting. Indeed they are globally generated by definition and semistable  (see  \cite{C--H2}, Theorem 2.9), hence $\mu$--semistable. Moreover their reduced Hilbert polynomial is
$$
p_{\cE}(t):=\chi(\cE(th))/\rk(\cE)=\deg(F){{t+3}\choose3}
$$ 
(e.g., see \cite{C--H2}, Lemma 2.6).

For the following proposition see \cite{C--H2}.

\begin{proposition}
\label{pModUlrich}
There exist coarse moduli spaces $\cM_F^{ss,U}(r;c_1,\dots,c_r)$ and $\cM_F^{s,U}(r;c_1,\dots,c_r)$ for respectively semistable and stable Ulrich bundles of rank $r$ on $F$ with Chern classes $c_1,\dots,c_r$.
\end{proposition}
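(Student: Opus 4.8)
The plan is to obtain the Ulrich moduli spaces as open subschemes of the Gieseker moduli space attached to the Ulrich Hilbert polynomial. First I would fix that polynomial: since every Ulrich bundle $\cE$ of rank $r$ is initialized, aCM and satisfies $h^0\big(F,\cE\big)=r\deg(F)$, its reduced Hilbert polynomial is the fixed $p_{\cE}(t)=\deg(F){{t+3}\choose3}$ recalled above, so that $\chi(\cE(th))=\chi_U(t):=r\deg(F){{t+3}\choose3}$ is independent of $\cE$. By \cite{C--H2}, Theorem 2.9, Ulrich bundles are semistable; hence each of them represents a point of the coarse moduli space $\cM_F^{ss}(\chi_U)$ of $S$-equivalence classes of semistable torsion-free sheaves with Hilbert polynomial $\chi_U$, which exists as a projective scheme by the Gieseker--Maruyama construction (see \cite{H--L}). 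Fixing the Chern classes, such bundles lie in the open and closed subscheme $\cM_F^{ss}(\chi_U)(r;c_1,\dots,c_r)$.

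Next I would cut out the Ulrich bundles by open conditions. As already observed, semicontinuity produces the open aCM locus $\cM_F^{ss,aCM}(\chi_U)(r;c_1,\dots,c_r)$ inside $\cM_F^{ss}(\chi_U)(r;c_1,\dots,c_r)$. On this locus a bundle is Ulrich precisely when, in addition, $h^0\big(F,\cE(-h)\big)=0$ (initialization) and $h^0\big(F,\cE\big)=r\deg(F)$; the latter, since $H^1\big(F,\cE\big)=H^2\big(F,\cE\big)=0$ and $\chi(\cE)=r\deg(F)$ there, is equivalent to $H^3\big(F,\cE\big)=0$. Both are vanishings of cohomology, hence open conditions by the semicontinuity theorem applied to the universal family over the parameter scheme of the Gieseker--Maruyama construction; being intrinsic to the isomorphism class of the sheaf, they are invariant under the group acting in that construction and so descend to open conditions on the quotient. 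In this way one obtains an open subscheme $\cM_F^{ss,U}(r;c_1,\dots,c_r)\subseteq\cM_F^{ss,aCM}(\chi_U)(r;c_1,\dots,c_r)$.

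Finally, an open subscheme of a coarse moduli space corepresents the corresponding open subfunctor, so $\cM_F^{ss,U}(r;c_1,\dots,c_r)$ is a coarse moduli space for semistable Ulrich bundles of rank $r$ with the prescribed Chern classes; intersecting it with the stable locus $\cM_F^{s}(\chi_U)$ yields $\cM_F^{s,U}(r;c_1,\dots,c_r)$. I expect the only genuinely delicate point to be the openness of the aCM condition underlying the whole argument: a priori it is an intersection over all $t\in\bZ$ of open conditions, and one must use boundedness of the family of semistable sheaves with Hilbert polynomial $\chi_U$, together with Serre vanishing for $t\gg0$ and Serre duality for $t\ll0$, to reduce it to finitely many twists uniformly over the base before semicontinuity applies. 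This is exactly the point already granted when the loci $\cM_F^{ss,aCM}$ were introduced; everything else is formal.
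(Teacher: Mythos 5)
The paper offers no proof of this proposition: it is quoted directly from \cite{C--H2} (``For the following proposition see \cite{C--H2}''), so your attempt can only be compared with the standard construction, which is indeed the route you follow — realize the Ulrich loci as open subschemes of the Gieseker--Maruyama space $\cM_F^{ss}(\chi_U)$. Most of your steps are correct: the identification of the fixed Hilbert polynomial, the reduction of the Ulrich condition on the aCM locus to the two semicontinuity-open conditions $h^0\big(F,\cE(-h)\big)=0$ and $h^3\big(F,\cE\big)=0$ (the latter equivalent to $h^0\big(F,\cE\big)=r\deg(F)$ because $h^1=h^2=0$ and $\chi(\cE)=r\deg(F)$ there), and your remark that the aCM condition must first be reduced to finitely many twists via boundedness before semicontinuity applies.

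There is, however, one genuine gap in your descent step. You argue that the Ulrich conditions, being invariant under the group acting on the Quot-scheme parameter space $R^{ss}$, ``descend to open conditions on the quotient''. This is not automatic: $\cM_F^{ss}(\chi_U)$ is a \emph{good} quotient of $R^{ss}$, whose closed points are $S$-equivalence classes rather than isomorphism classes, and the image of a $\GL$-invariant open subset under a good quotient map is only guaranteed to be open when that subset is \emph{saturated}, i.e.\ a union of full $S$-equivalence classes. Moreover, without saturation the open subfunctor cut out by the image would parameterize families of sheaves merely $S$-equivalent to Ulrich sheaves, which is not the functor you want to corepresent. So you must prove in addition that a semistable sheaf $\cE$ is Ulrich if and only if $\gr(\cE)$ is Ulrich. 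One direction holds because the defining vanishings (intermediate cohomology of all twists, $h^0$ of the negative twist, $h^3$) are subadditive under extensions; the converse — that all Jordan--H\"older factors of an Ulrich bundle are Ulrich — is exactly Theorem 2.8 of \cite{C--H2}, the same result this paper invokes in the proof of Proposition \ref{pStable}. With that saturation statement added, your argument, including the passage from the coarse moduli property of $\cM_F^{ss}(\chi_U)$ to that of the open subscheme, goes through.
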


A helpful result about Ulrich bundles is the following.

\begin{lemma}
\label{lObstructed} 
Let $F$ be a del Pezzo threefold. If $\cE$ is an Ulrich bundle of rank $r$ on $F$, then 
\begin{gather*}
h^2\big(F,\cE\otimes {\cE}^\vee(th)\big)=0,\qquad t\ge0,\\
h^3\big(F,\cE\otimes {\cE}^\vee(th)\big)=0,\qquad t\ge-1.
\end{gather*}
In particular, stable Ulrich bundles  with Chern classes $c_1,\dots,c_r$, if any, correspond to smooth points of $\cM_F^{s,U}(r;c_1,\dots,c_r)$.
\end{lemma}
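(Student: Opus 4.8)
The plan is to phrase everything in terms of the self--dual sheaf $\End\cE:=\cE\otimes\cE^\vee$ and to reduce both vanishings, via Serre duality, to statements about $H^0$ and $H^1$. Since $F$ is a del Pezzo threefold its index equals $n-1=2$, so $\omega_F\cong\cO_F(-2h)$. Using the isomorphism $(\cE\otimes\cE^\vee)^\vee\cong\cE\otimes\cE^\vee$ and Serre duality on $F$ one gets
\begin{gather*}
h^2\big(F,\cE\otimes\cE^\vee(th)\big)=h^1\big(F,\cE\otimes\cE^\vee(-(t+2)h)\big),\\
h^3\big(F,\cE\otimes\cE^\vee(th)\big)=h^0\big(F,\cE\otimes\cE^\vee(-(t+2)h)\big).
\end{gather*}
Hence the two assertions are equivalent to $h^0\big(F,\End\cE(sh)\big)=0$ for $s\le-1$ and $h^1\big(F,\End\cE(sh)\big)=0$ for $s\le-2$.

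The $h^0$--statement is the easy one. An Ulrich bundle is semistable (see \cite{C--H2}), hence $\mu$--semistable, and so is every twist $\cE(sh)$, with $\mu(\cE(sh))=\mu(\cE)+s\,h^3<\mu(\cE)$ for $s<0$. A nonzero homomorphism $\cE\to\cE(sh)$ would have image simultaneously a quotient of $\cE$ and a subsheaf of $\cE(sh)$, forcing $\mu(\cE)\le\mu(\cE(sh))$, a contradiction; thus $\operatorname{Hom}(\cE,\cE(sh))=h^0\big(F,\End\cE(sh)\big)=0$ for all $s\le-1$. This already yields the stated vanishing of $h^3$.

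For $h^1\big(F,\End\cE(sh)\big)=0$ with $s\le-2$ I would cut down by a general $Y\in|h|$, which is a del Pezzo surface ($\omega_Y\cong\cO_Y(-h)$ by adjunction) on which $\cE|_Y$ is again Ulrich, hence $\mu$--semistable. From the restriction sequence
\begin{equation*}
0\longrightarrow\End\cE((s-1)h)\longrightarrow\End\cE(sh)\longrightarrow\End(\cE|_Y)(sh)\longrightarrow0
\end{equation*}
the map $H^1\big(F,\End\cE((s-1)h)\big)\to H^1\big(F,\End\cE(sh)\big)$ is surjective once $H^1\big(Y,\End(\cE|_Y)(sh)\big)=0$, and injective once $H^0\big(Y,\End(\cE|_Y)(sh)\big)=0$, since its kernel is a quotient of the latter group. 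The surface slope argument gives $H^0\big(Y,\End(\cE|_Y)(sh)\big)=0$ for $s\le-1$; granting in addition $H^1\big(Y,\End(\cE|_Y)(sh)\big)=0$ for $s\le-2$, both conditions hold throughout $s\le-2$, so all these restriction maps are isomorphisms there. As the groups $H^1\big(F,\End\cE(sh)\big)$ vanish for $s\ll0$, being all isomorphic they vanish for every $s\le-2$, which is the remaining vanishing of $h^2$.

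The main obstacle is therefore the two--dimensional statement $H^1\big(Y,\End(\cE|_Y)(sh)\big)=0$ for $s\le-2$. By Serre duality on $Y$ this is equivalent to the same vanishing for $s\ge1$, and the natural attack is to restrict once more to a general anticanonical curve $C\in|h|_Y$, which is elliptic ($\omega_C\cong\cO_C$) and carries the Ulrich, hence semistable, bundle $\cE|_C$; there $H^0\big(C,\End(\cE|_C)(sh)\big)=0$ for $s\le-1$ and $H^1\big(C,\End(\cE|_C)(sh)\big)=0$ for $s\ge1$, both by semistability and curve duality. The delicate point is that these two curve ranges are \emph{disjoint}, so the clean ``isomorphism'' device that closed the threefold step no longer closes the induction on $Y$; completing the surface case needs genuinely two--dimensional input, for instance the evaluation sequence of the Ulrich dual $\cE|_Y^\vee(2h)$ combined with a Riemann--Roch count, or the known cohomology of endomorphism sheaves of Ulrich bundles on del Pezzo surfaces. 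Once the vanishings are established, the final assertion is immediate: for a stable $\cE$ the obstructions to deformation lie in $\ext^2(\cE,\cE)=H^2\big(F,\End\cE\big)$, which is the case $t=0$ of the first vanishing; since it is zero the deformations are unobstructed, so $[\cE]$ is a smooth point of $\cM_F^{s,U}(r;c_1,\dots,c_r)$.
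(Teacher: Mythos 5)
Your reduction via Serre duality (using $\omega_F\cong\cO_F(-2h)$ and the self--duality of $\cE\otimes\cE^\vee$) is correct, and the half of the lemma concerning $h^3$ is fully proved by your slope argument: Ulrich bundles are semistable by \cite{C--H2}, hence $\mu$--semistable, and a nonzero map $\cE\to\cE(sh)$ with $s\le-1$ would contradict this. The concluding deformation--theoretic remark ($\ext^2(\cE,\cE)=H^2\big(F,\cE\otimes\cE^\vee\big)=0$ implies smoothness) is also fine. Note that this route already differs from the paper's, which never invokes semistability: there both vanishings are extracted from the Eisenbud--Schreyer--Weyman linear presentation of an Ulrich bundle together with the hypothesis that $\cE$ is initialized.

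However, your treatment of the $h^2$ vanishing has a genuine gap, and you say so yourself. The hyperplane--restriction induction is correctly set up (injectivity from $H^0(Y,\cdot)=0$, surjectivity from $H^1(Y,\cdot)=0$, vanishing for $s\ll0$), but its engine is the surface statement $H^1\big(Y,\End(\cE|_Y)(sh)\big)=0$ for $s\le-2$, which you never prove: you observe that the further restriction to an anticanonical elliptic curve cannot close it, and you only gesture at ``an evaluation sequence plus Riemann--Roch'' or ``known cohomology of endomorphism sheaves on del Pezzo surfaces''. As written, the first of the two asserted vanishings is therefore not established. The missing idea is the one the paper uses, and it works directly on $F$ with no induction: by \cite{E--S--W} an Ulrich bundle has a linear presentation $\cO_{\p7}(-1)^{\oplus\beta_1}\to\cO_{\p7}^{\oplus\beta_0}\to\cE\to0$; restricting to $F$ and letting $\cK$ be the kernel of $\cO_F^{\oplus\beta_0}\to\cE$, the aCM property of $\cE$ and Serre duality give
$$
h^2\big(F,\cE\otimes\cE^\vee(th)\big)\le h^3\big(F,\cK\otimes\cE^\vee(th)\big)=h^0\big(F,\cK^\vee\otimes\cE((-t-2)h)\big)\le\beta_1h^0\big(F,\cE((-t-1)h)\big),
$$
where the last inequality comes from dualizing the epimorphism $\cO_F(-h)^{\oplus\beta_1}\twoheadrightarrow\cK$; this vanishes for $t\ge0$ precisely because $\cE$ is initialized. (Incidentally, the same trick applied on $Y$, after Serre duality on $Y$ with $\omega_Y\cong\cO_Y(-h)$ to move to twists $-s-1\ge1$, proves exactly the surface vanishing you left open, since $\cE|_Y$ is again Ulrich and initialized; so your skeleton is completable, but the key mechanism is absent from your proposal.)
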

\begin{proof}
If $\cE$ is Ulrich, then there exists a presentation of the form
$$
\cO_{\p7}(-1)^{\oplus\beta_1}\longrightarrow\cO_{\p7}^{\oplus\beta_0}\to\cE\longrightarrow0
$$
(see \cite{E--S--W}). Twisting such a sequence by $\cO_F$ we obtain an exact sequence of the form
$$
\cO_{F}(-h)^{\oplus\beta_1}\longrightarrow\cO_{F}^{\oplus\beta_0}\to\cE\longrightarrow0.
$$
If we denote by $\mathcal K$ the image of $\cO_{F}(-h)^{\oplus\beta_1}\to\cO_{F}^{\oplus\beta_0}$, then we finally obtain the exact sequence 
$$
0\longrightarrow\mathcal K\longrightarrow\cO_{F}^{\oplus\beta_0}\to\cE\longrightarrow0.
$$
The sheaf $\mathcal K$ is locally free on $F$, because the same is true for both $\cO_{F}^{\oplus\beta_0}$ and $\cE$.

Twisting such a sequence by ${\cE}^\vee(th)$ and taking its cohomology, we obtain
\begin{gather*}
h^3\big(F,\cE\otimes {\cE}^\vee(th)\big)\le \beta_0h^3\big(F,\cE^\vee(th)\big)=\beta_0h^0\big(F,\cE((-t-2)h)\big),\\
h^2\big(F,\cE\otimes {\cE}^\vee(th)\big)\le h^3\big(F,{\mathcal K}\otimes {\cE}^\vee(th)\big)=h^0\big(F,{\mathcal K}^\vee\otimes\cE((-t-2)h)\big),
\end{gather*}
because $\cE$ is aCM and $F$ has dimension $3$. 
If $t\ge-1$, then $h^0\big(F,\cE((-t-2)h)\big)$ because $\cE$ is initialized. Thus $h^3\big(F,\cE\otimes {\cE}^\vee(th)\big)=0$ in such a range.

The epimorphism $\cO_{F}(-1)^{\oplus\beta_1}\twoheadrightarrow\mathcal K$ induces by duality a monomorphism ${\mathcal K}^\vee\otimes\cE((-t-2)h)\rightarrowtail\cE((-t-1)h)^{\oplus\beta_1}$. Thus 
$$
h^0\big(F,{\mathcal K}^\vee\otimes\cE((-t-2)h)\big)\le \beta_1h^0\big(F,\cE((-t-1)h)\big)=0
$$
if $t\ge0$, because $\cE$ is initialized by definition.
\end{proof}

Now assume that $\cE$ has rank $2$. If $s\in H^0\big(F,\cE$\big), then its zero--locus
$(s)_0\subseteq F$ is either empty or its codimension is at most
$2$.
 Assume that we are in the second case and that the codimension is actually $2$. Thus we can consider its Koszul complex 
\begin{equation}
  \label{seqIdeal}
  0\longrightarrow \cO_F\longrightarrow \cE\longrightarrow \cI_{C\vert F}(c_1)\longrightarrow 0,
\end{equation}
where $\cI_{C \vert F}$ denotes the sheaf of ideals of $C:=(s)_0$ inside $F$.
Moreover we also have the following exact {sequence}
\begin{equation}
  \label{seqStandard}
  0\longrightarrow \cI_{E\vert F}\longrightarrow \cO_F\longrightarrow \cO_E\longrightarrow 0.
\end{equation}

The above construction can be reversed, giving rise to Hartshorne--Serre correspondence (for further details about the statement in the general case see \cite{Vo}, \cite{Ha1}, \cite{Ar}). We will inspect a relative form of such a correspondence later on in Section \ref{sRelative}.

\section{Semistability of aCM bundles on $F$ of rank $2$}
\label{sRank}
Now we restrict our attention to rank $2$ aCM vector bundles on the del Pezzo threefold $F:=\p1\times\p1\times\p1\subseteq\p7$.

We proved in \cite{C--F--M} that each general section of such a bundle vanishes exactly along a curve. Moreover, making use of this fact, we also classified therein all such bundles, obtaining the following proposition listing all the possible cases.

\begin{proposition}
\label{pList}
Let $\cE$ be an indecomposable, initialized, aCM vector bundle of rank $2$ on $F:=\p1\times\p1\times\p1$ with $c_1:=c_1(\cE)$ and $c_2(\cE):=c_2$. Let $C$ be the zero--locus of a general section of $\cE$ and denote by $p_C(t)$ its Hilbert polynomial. Then the following possibilities hold for $\cE$, up permutations of the $h_i$'s.
\begin{enumerate}
\item $\cE$ satisfies either $c_1=0$ or $c_1=h_3$: then we can assume $c_2=h_2h_3$. We have $p_C(t)=t+1$ and $C$ is a line, thus it is irreducible. Moreover, each such curve can be obtained in this way.
\item $\cE$ satisfies $c_1=2h_1+2h_2+h_3$: then we can assume  $c_2=h_2h_3+2h_1h_3+2h_1h_2$. We have $p_C(t)=5t+1$ and $C$ is a possibly reducible quintic curve.
\item $\cE$ satisfies $c_1=h_1+2h_2+3h_3$: then either $c_2=4h_2h_3+h_1h_3+2h_1h_2$ or $c_2=3h_2h_3+3h_1h_3+h_1h_2$. In this case $\cE$ is Ulrich, hence globally generated and $p_C(t)=7t+1$, thus $C$ can be assumed to be a rational normal curve (in particular its embedding $C\subseteq\p7$ is non--degenerate). Moreover, each such curve can be obtained in this way.
\item $\cE$ satisfies $c_1=2h$: then we can assume either $c_2=2h_2h_3+3h_1h_3+3h_1h_2$ or $c_2=2h_2h_3+2h_1h_3+4h_1h_2$. In this case $\cE$ is Ulrich, hence globally generated and $p_C(t)=8t$, thus $C$ can be assumed to be an elliptic normal curve (in particular its embedding $C\subseteq\p7$ is non--degenerate). Moreover, each such curve can be obtained in this way.
\end{enumerate}
\end{proposition}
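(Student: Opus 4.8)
The plan is to take the classification of the admissible first Chern classes $c_1$ from \cite{C--F--M} as given, and to read off all the remaining invariants from the Koszul sequence \eqref{seqIdeal} together with the standard sequence \eqref{seqStandard}. Since a general section of $\cE$ vanishes along a curve $C$, the sequence \eqref{seqIdeal} identifies $c_2(\cE)$ with the class $[C]\in A^2(F)$, so that the degree of $C\subseteq\p7$ equals $c_2\cdot h$, where $h=h_1+h_2+h_3$ and $h^3=\deg(F)=6$. The argument then splits into a numerical part (degree and Hilbert polynomial) and a geometric part (the type of $C$ and the converse constructions).

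First I would determine, for each admissible $c_1$, the possible values of $c_2$. This is the step I expect to be the main obstacle, since it is here that the aCM and initialized hypotheses, together with indecomposability, must be used in full. Writing $c_2=\beta_1h_2h_3+\beta_2h_1h_3+\beta_3h_1h_2$ and expanding $\chi\big(\cE(th)\big)$ by Hirzebruch--Riemann--Roch on $F$ — equivalently, additively from \eqref{seqIdeal} and the K\"unneth formula $\chi\big(\cO_F(ah_1+bh_2+ch_3)\big)=(a+1)(b+1)(c+1)$ — the invariant $c_2$ enters linearly. The vanishing $h^1=h^2=0$ in every twist, the normalization imposed by initialization, and the bound $\rk(\cE)\deg(F)=12$ on the number of generators of $H^0_*\big(F,\cE\big)$ should then cut the $\beta_i$ down to the finitely many listed values; I expect this to amount to a short cohomological case analysis running in parallel with \cite{C--F--M}.

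With $c_2$ in hand, the Hilbert polynomial of $C$ is immediate. Its leading term is $(c_2\cdot h)\,t$, giving degrees $1,5,7,8$ in the four cases. For the constant term I would invoke adjunction along the Koszul resolution, $\omega_C\cong\big(\omega_F\otimes\cO_F(c_1)\big)\vert_C$, so that
\begin{equation*}
2p_a(C)-2=(c_1-2h)\cdot c_2,
\end{equation*}
using $\omega_F\cong\cO_F(-2h)$. A direct intersection computation with $h_i^2=0$ yields $(c_1-2h)\cdot c_2=-2$ in cases (1)--(3) and $0$ in case (4), hence $p_a(C)=0,0,0,1$ and $p_C(t)=t+1,\,5t+1,\,7t+1,\,8t$. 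Thus $C$ is a line in (1), a quintic of arithmetic genus $0$ in (2), and a rational, respectively elliptic, curve of degree $7$, respectively $8$, in (3), respectively (4).

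It remains to identify $C$ geometrically and to prove the converse assertions. In cases (3) and (4) I would first check that $\cE$ is Ulrich by verifying $h^0\big(F,\cE\big)=12$ from the Euler characteristic computed above together with the vanishing of $h^i\big(F,\cE\big)$ for $i>0$; as an Ulrich bundle $\cE$ is globally generated, so Bertini makes the zero--locus of a general section smooth, and non--degeneracy of $C\subseteq\p7$ follows from global generation and the numerical data, pinning $C$ down to a rational, respectively elliptic, normal curve. In case (2), where $\cE$ is not Ulrich, no such smoothing is available, which is precisely why $C$ is only a possibly reducible quintic. Finally, for the converse statements in (1), (3), (4) I would run the Hartshorne--Serre correspondence backwards: starting from a curve $C$ of the prescribed type whose dualizing sheaf satisfies the subcanonical identity $\omega_C\cong\big(\omega_F\otimes\cO_F(c_1)\big)\vert_C$ recorded by the adjunction formula (together with the generation condition attached to the correspondence), one builds the extension \eqref{seqIdeal}, checks that the middle term is locally free of rank $2$, and verifies that it is initialized, indecomposable and aCM — the aCM property reducing, through \eqref{seqIdeal} and \eqref{seqStandard}, to cohomology vanishing for $\cI_{C\vert F}$, which I would read off from the explicit geometry of these normal curves.
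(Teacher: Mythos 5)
There is an unusual wrinkle here: the paper itself contains no proof of this proposition. It is stated as a recollection of results established in the authors' earlier paper \cite{C--F--M} (``we also classified therein all such bundles, obtaining the following proposition''), so the ``paper's proof'' is a citation, and any self-contained argument must reproduce the analysis of that reference. Your numerical layer is correct and does check out: \eqref{seqIdeal} identifies $[C]$ with $c_2(\cE)$, the degree is $c_2\cdot h$, adjunction with $\omega_F\cong\cO_F(-2h)$ gives $2p_a(C)-2=(c_1-2h)\cdot c_2$, and the resulting genera $0,0,0,1$ and polynomials $t+1$, $5t+1$, $7t+1$, $8t$ agree with the statement; likewise $\chi(\cE)=12$ in cases (3)--(4) follows from \eqref{seqIdeal} once one observes $h^3\big(F,\cE\big)=h^0\big(F,\cE(-c_1-2h)\big)=0$.

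The genuine gap is exactly where you predict ``the main obstacle'': the restriction of $c_2$ to the listed values, which you defer with ``I expect this to amount to a short cohomological case analysis''. The tools you name — Riemann--Roch in every twist, initialization, and the bound of $12$ on the number of generators — are provably insufficient for this step. Take $c_1=2h$ and $c_2=h_2h_3+h_1h_3+6h_1h_2$: this class is effective, satisfies $c_2\cdot h=8$ and $(c_1-2h)\cdot c_2=0$, and since $\chi\big(\cE(th)\big)$ computed from \eqref{seqIdeal} depends on $c_2$ only through the degree $c_2\cdot h$ and the genus, it produces the identical value $12{t+3\choose3}$ in every twist as the two admissible classes do; no Euler-characteristic or generator-counting argument can ever exclude it. What excludes it is geometry: $c_2\cdot h_1=1$ means the first projection restricted to $C$ has degree one, which is incompatible with a smooth connected curve of genus $1$ — and arguments of precisely this ad hoc, case-by-case kind (together with duality bookkeeping of $h^0$ and $h^3$ in each twist for the non-Ulrich cases) are what constitute the classification in \cite{C--F--M}. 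The converse assertions have the same status: that the Serre-constructed bundle attached to an arbitrary line, rational normal curve, or elliptic normal curve is aCM rests on the projective normality of those curves (in this paper that input is \cite{C--G--N}, Theorem 2.1), and even the non-degeneracy claim in cases (3)--(4) requires showing $h^0\big(F,\cI_{C\vert F}(h)\big)=0$ rather than following formally from global generation. In short, your skeleton is the right one and the computations you actually perform are correct, but everything that makes the proposition true is either missing or silently outsourced to the very reference the paper cites in lieu of a proof.
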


We call the above Chern classes {\sl representative }\/.

We are interested in dealing with moduli spaces of rank $2$ aCM semistable bundles on $F$. Thus the very first step in our study is to check whether such semistable bundles actually exist.

Assume $c_1$ is either $2h$ or $h_1+2h_2+3h_3$. In this case $\cE$ is Ulrich. Theorem 2.9 (c) of \cite{C--H2} shows that $\cE$ is stable if and only if it is $\mu$--stable. It is interesting to find a simple condition which guarantees the stability or strict semistability of such bundles. 

\begin{proposition}
\label{pStable}
Let $\cE$ be an Ulrich bundle of rank $2$ on $F$. 

The vector bundle $\cE$ is a strictly semistable Ulrich bundle if and only if, up to permutations of the $h_i$'s, it fits into an exact sequence of the form
\begin{equation}
\label{seqUnstable}
0\longrightarrow\calL\longrightarrow \cE\longrightarrow\cO_F (2h_2+h_3)\longrightarrow0,
\end{equation}
where $\calL$ is either $\cO_F(2h_1+h_3)$ or $\cO_F(h_1+2h_3)$.
\end{proposition}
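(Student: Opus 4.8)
The plan is to prove the two implications separately, using the structure theory of Ulrich bundles recalled in the excerpt. Throughout I work with an Ulrich bundle $\cE$ of rank $2$ on $F$, so by Proposition \ref{pList} we are in case (3) or (4); in either case $c_1(\cE)$ equals $h_1+2h_2+3h_3$ (up to permutation) or $2h$, and in particular $c_1(\cE)h^2/2$ is computable explicitly from the intersection ring $A(F)\cong\bZ[h_1,h_2,h_3]/(h_1^2,h_2^2,h_3^2)$. The key observation is that, since $\cE$ is Ulrich it is semistable by Theorem 2.9 of \cite{C--H2}, and by the same theorem stability and $\mu$--stability coincide; hence \emph{strict} semistability means precisely the failure of $\mu$--stability, i.e.\ the existence of a destabilizing sub-line-bundle or quotient line bundle with equal slope.

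For the ``if'' direction I would start from a sequence of the form \eqref{seqUnstable} and simply compute slopes. Both $\calL$ and $\cO_F(2h_2+h_3)$ are line bundles whose first Chern classes sum to $c_1(\cE)$, and a direct computation of $\mu$ using $h=h_1+h_2+h_3$ should show that each has slope equal to $\mu(\cE)$; this realizes $\cE$ as a nontrivial extension of two slope-equal line bundles, so it is $\mu$--semistable but not $\mu$--stable, hence strictly semistable. The main content is the ``only if'' direction. Here I assume $\cE$ is strictly semistable and produce the sequence. Strict semistability gives a destabilizing quotient; since $\cE$ has rank $2$ this quotient has rank $1$, and after saturating I may assume it is a line bundle $\cO_F(\cdot)$, with kernel $\calL$ a line bundle as well, giving an exact sequence
\begin{equation*}
0\longrightarrow\calL\longrightarrow\cE\longrightarrow\cO_F(D)\longrightarrow0
\end{equation*}
with $\mu(\calL)=\mu(\cO_F(D))=\mu(\cE)$.

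The crux is to pin down the numerical type of $\calL$ and $\cO_F(D)$, and this is where I expect the main obstacle. The slope-equality condition $\mu(\calL)=\mu(\cO_F(D))$ together with $c_1(\calL)+c_1(\cO_F(D))=c_1(\cE)$ restricts the possible bidegrees, but does not by itself single out the stated answer; I must also use that $\cE$ is \emph{initialized} and \emph{Ulrich}. Concretely, I would impose that $\cO_F(D)$ is a quotient of $\cE$ (so $h^0(F,\cE\otimes\cO_F(-D))\neq0$, forcing $-D$ not too positive) and that $\calL$ injects into the initialized $\cE$ (so $\calL$ has no negative twist admitting sections against initialization), and then run through the finitely many bidegree solutions of the slope equation, discarding those incompatible with the Ulrich resolution $\cO_F^{\oplus\beta_0}\twoheadrightarrow\cE$ and with the Hilbert polynomial $p_{\cE}(t)=\deg(F)\binom{t+3}{3}$. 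For the case $c_1=2h$ symmetry among the $h_i$ reduces the bookkeeping, while for $c_1=h_1+2h_2+3h_3$ the asymmetry of the exponents does most of the work. I would expect the surviving possibilities for $(\calL,\cO_F(D))$ to be exactly $\calL\in\{\cO_F(2h_1+h_3),\cO_F(h_1+2h_3)\}$ with quotient $\cO_F(2h_2+h_3)$, up to permutation; and finally I would check the extension is necessarily nonsplit, since a split $\cE$ would be decomposable, contradicting the indecomposability built into the Ulrich/aCM classification of Proposition \ref{pList}.
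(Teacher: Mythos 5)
You set up the problem exactly as the paper does: Ulrich bundles are semistable, and for Ulrich bundles stability coincides with $\mu$--stability (Theorem 2.9 of \cite{C--H2}), so strict semistability is precisely failure of $\mu$--stability; your ``if'' direction (slope computation plus these facts) is correct. The genuine gap is in the ``only if'' direction, at the words ``after saturating I may assume it is a line bundle.'' Saturation does make the kernel $\calL$ a line bundle, but the quotient is then only torsion--free of rank one, hence of the form $\cI_{Z\vert F}(D)$ with $Z$ of codimension at least two, and nothing in your sieve excludes $Z\neq\emptyset$. This is not a removable technicality. Take $c_1=h_1+2h_2+3h_3$, $\calL=\cO_F(h_2+2h_3)$, $D=h$, and $Z=L$ a line of class $h_1h_3$: then $\mu(\calL)=\mu(\cI_{L\vert F}(h))=6=\mu(\cE)$, the first Chern classes add up correctly, $c_1(\calL)D+[L]=3h_2h_3+3h_1h_3+h_1h_2$ is one of the representative second Chern classes, $\cI_{L\vert F}(h)$ is globally generated (the ideal of $L$ is generated by forms of degrees $h_1$ and $h_3$), $\chi(\calL)+\chi(\cI_{L\vert F}(h))=6+6=12$ matches the Ulrich Hilbert polynomial, initialization is not violated, and a locally free extension of $\cI_{L\vert F}(h)$ by $\calL$ does exist by the Serre construction. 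So this configuration passes every test you list, yet it must be excluded: for any such extension one computes $h^2\big(F,\cE(-3h)\big)=h^1\big(\p1,\cO_{\p1}(-2)\big)=1$, so the bundle is never aCM, in particular never Ulrich. Your plan, as written, cannot detect this.

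The paper kills all such cases with one structural input that your proposal is missing: Theorem 2.8 of \cite{C--H2}, which says that in a slope--equality sequence $0\to\calL\to\cE\to\cM\to0$ with $\cE$ Ulrich and $\cM$ torsion--free, both $\calL$ and $\cM$ are themselves Ulrich. Since rank--one Ulrich sheaves on the smooth threefold $F$ are invertible, the quotient is automatically a line bundle, and then the classification of Ulrich line bundles on $F$ from \cite{C--F--M} (the twists $\cO_F(\alpha_1h_1+\alpha_2h_2+\alpha_3h_3)$ with $(\alpha_1,\alpha_2,\alpha_3)$ a permutation of $(0,1,2)$) collapses the whole case analysis to matching $c_2$, with no bidegree bookkeeping at all. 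Without this theorem, or a substitute cohomological argument of the kind above run for every surviving $(\calL,Z,D)$, your sieve does not close. Two smaller slips: a quotient $\cE\twoheadrightarrow\cO_F(D)$ gives $h^0\big(F,{\cE}^\vee(D)\big)\neq0$, not $h^0\big(F,\cE\otimes\cO_F(-D)\big)\neq0$ (the latter says $\cO_F(D)$ is a subsheaf); and the final non--splitness check is unnecessary, since the proposition does not assume $\cE$ indecomposable, and the split bundle $\cO_F(2h_1+h_3)\oplus\cO_F(2h_2+h_3)$ is a perfectly good strictly semistable Ulrich bundle fitting \eqref{seqUnstable}.
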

\begin{proof}
Assume that $\cE$ is an Ulrich bundle. 

Let $c_1=2h$: hence $c_2$ can be assumed to be either $2h_2h_3+2h_1h_3+4h_1h_2$ or $2h_2h_3+3h_1h_3+3h_1h_2$. 

We already know that $\cE$ is semistable, whence $\mu$--semistable. Assume it is not  $\mu$--stable. It follows the existence of sheaves $\calL$ and $\cM$ of rank $1$ with $\mu(\cM)=\mu(\cE)=6$, $\cM$ torsion free, fitting into a sequence of the form
$$
0\longrightarrow\calL\longrightarrow \cE\longrightarrow\cM\longrightarrow0.
$$
By the additivity of the first Chern class we obtain that $\mu(\calL)=6$. Hence Theorem 2.8 of \cite{C--H2} implies that $\calL$ and $\cM$ are both Ulrich bundles on $F$. In \cite{C--F--M} the complete list of Ulrich invertible sheaves is given. Taking into account that $\mu(\cM)=6$, it follows that $\cM\cong\cO_F(\alpha_1h_1+\alpha_2h_2+\alpha_3h_3)$ where $(\alpha_1,\alpha_2,\alpha_3)$ is, up to permutations, $(0,1,2)$. Consequently $\calL\cong\cO_F((2-\alpha_1)h_1+(2-\alpha_2)h_2+(2-\alpha_3)h_3)$.

Computing $c_2$ from the exact sequences we deduce that  $\calL$ and $\cM$ are either $\cO_F(2h_2+h_3)$ and $\cO_F(2h_1+h_3)$, or $\cO_F(2h_1+h_3)$ and $\cO_F(2h_2+h_3)$. We have thus proved the existence of Sequence \eqref{seqUnstable} in the case $c_1=2h$.

When $c_1=h_1+2h_2+3h_3$, then $c_2$ is either $4h_2h_3+h_1h_3+2h_1h_2$ or $c_2=3h_2h_3+3h_1h_3+h_1h_2$. The same argument used in the case $c_1=2h$ shows that the only possibility for $\cE$ to be semistable is that it fits in an exact sequence of the form 
$$
0\longrightarrow\cO_F(h_1+2h_3)\longrightarrow \cE\longrightarrow\cO_F (2h_2+h_3)\longrightarrow0,
$$

Conversely assume that $\cE$ fits in Sequence \eqref{seqUnstable}. On the one hand it is immediate to check that 
$$
\mu(\cO_F(2h_2+h_3))=\mu(\cO_F(2h_1+h_3))=\mu(\cO_F(h_1+2h_3))=6=\mu(\cE), 
$$
thus $\cE$ is never $\mu$--stable in the above cases. On the other hand easy computations show that $\cE$ is Ulrich, thus $\cE$ is semistable.
\end{proof}

Let us consider initialized, aCM vector bundles associated to lines on $F$: for such bundles we can assume that $c_1$ is either $0$ or $h_3$. We start with the case $c_1=0$.

\begin{proposition}
\label{pLine}
Let $\cE$ be an aCM vector bundle of rank $2$ on $F$ with $c_1=0$. Then $\cE$ is $\mu$--semistable, but not semistable.
\end{proposition}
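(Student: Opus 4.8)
The plan is to exploit the Hartshorne--Serre sequence \eqref{seqIdeal} attached to a general section of $\cE$. By Proposition \ref{pList}(1) a general section of $\cE$ vanishes along a line $C\subseteq F$ with $c_2=h_2h_3$ (up to a permutation of the $h_i$'s), so that, since $c_1=0$, the sequence \eqref{seqIdeal} becomes
\begin{equation*}
0\longrightarrow\cO_F\stackrel{\iota}{\longrightarrow}\cE\stackrel{q}{\longrightarrow}\cI_{C\vert F}\longrightarrow0.
\end{equation*}
Throughout I write $h=h_1+h_2+h_3$, so that $\mu(\cE)=c_1(\cE)h^2/2=0$.

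To prove $\mu$--semistability it suffices to bound the slopes of saturated rank $1$ subsheaves. Such a subsheaf is the kernel of a morphism from the locally free sheaf $\cE$ to a torsion--free sheaf, hence it is reflexive; since $F$ is smooth it is therefore a line bundle $\calL=\cO_F(a_1h_1+a_2h_2+a_3h_3)$, for which a routine K\"unneth computation gives $\mu(\calL)=c_1(\calL)h^2=2(a_1+a_2+a_3)$. I would then compose the inclusion $\calL\hookrightarrow\cE$ with $q$. If $q\vert_{\calL}=0$, the map factors through $\cO_F=\ker q$; if $q\vert_{\calL}\ne0$, composing the nonzero map $\calL\to\cI_{C\vert F}$ with the inclusion $\cI_{C\vert F}\hookrightarrow\cO_F$ (which is injective, hence preserves nonvanishing) again yields a nonzero morphism $\calL\to\cO_F$. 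In either case one obtains a nonzero section of $\cO_F(-a_1h_1-a_2h_2-a_3h_3)$, and since $h^0\big(F,\cO_F(e_1h_1+e_2h_2+e_3h_3)\big)\ne0$ forces $e_i\ge0$ for all $i$ (K\"unneth on $\p1\times\p1\times\p1$), it follows that $a_i\le0$, whence $\mu(\calL)=2(a_1+a_2+a_3)\le0=\mu(\cE)$. This proves $\mu$--semistability.

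For the failure of Gieseker semistability I would test the quotient $\cI_{C\vert F}$ itself. Twisting \eqref{seqIdeal} and the sequence \eqref{seqStandard} (with $E=C$) by $\cO_F(th)$ and taking Euler characteristics gives
\begin{gather*}
\chi\big(\cE(th)\big)=2\chi\big(\cO_F(th)\big)-\chi\big(\cO_C(th)\big),\\
\chi\big(\cI_{C\vert F}(th)\big)=\chi\big(\cO_F(th)\big)-\chi\big(\cO_C(th)\big).
\end{gather*}
Since $\chi\big(\cO_C(th)\big)=p_C(t)=t+1$ by Proposition \ref{pList}(1), subtracting yields
\begin{equation*}
p_{\cI_{C\vert F}}(t)-p_{\cE}(t)=\chi\big(\cI_{C\vert F}(th)\big)-\tfrac12\chi\big(\cE(th)\big)=-\tfrac12\chi\big(\cO_C(th)\big)=-\tfrac12(t+1)<0
\end{equation*}
for $t\gg0$. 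Thus the torsion--free rank $1$ quotient $\cI_{C\vert F}$ violates the Gieseker inequality, so $\cE$ is not semistable.

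The only delicate point is the $\mu$--semistability step: everything hinges on the observation that, through the projection $q$, every saturated rank $1$ subsheaf maps into $\cO_F$, combined with the positivity criterion for global sections of line bundles on the triple product. Once these are in place, both assertions reduce to the slope and Hilbert--polynomial bookkeeping above; note in particular that the sub-line bundle $\cO_F\xhookrightarrow{\iota}\cE$ already realizes $\mu(\cO_F)=0=\mu(\cE)$, so $\cE$ is semistable but not $\mu$--stable, consistently with the strict failure of Gieseker semistability.
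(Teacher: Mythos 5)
Your proof is correct, and the two halves compare differently with the paper. The failure of Gieseker semistability is proved exactly as in the paper: both arguments test the quotient $\cI_{C\vert F}\cong\cE/\cO_F$ coming from Sequence \eqref{seqIdeal} and find $p_{\cI_{C\vert F}}(t)=p_{\cE}(t)-\tfrac12(t+1)$. For $\mu$--semistability, however, your route is genuinely different in structure. The paper argues by contradiction on a destabilizing torsion--free quotient $\cQ$: it identifies $\cQ^{\vee\vee}\cong\cO_F(q_1)$ and the kernel $\cK\cong\cO_F(-q_1)$, shows the section $\cO_F\to\cE$ composes to zero into $\cQ$ (an effective divisor of negative degree would otherwise pass through the singular locus of $\cQ$), and then uses the snake lemma on the resulting commutative diagram to force $\cO_F(-q_1)\cong\cO_F$, a contradiction. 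You instead bound directly the slope of every saturated rank $1$ subsheaf $\calL$: the same reflexivity/invertibility facts from \cite{O--S--S} identify $\calL$ with a line bundle $\cO_F(a_1h_1+a_2h_2+a_3h_3)$, and composing $\calL\hookrightarrow\cE$ with $q$ (and, when needed, with $\cI_{C\vert F}\hookrightarrow\cO_F$) produces in either case a nonzero section of $\calL^\vee$, whence $a_i\le 0$ for all $i$ by K\"unneth. This is shorter, avoids the snake lemma entirely, and actually yields slightly more than the slope inequality (all three multidegrees of a rank $1$ subsheaf are nonpositive, not merely their sum); the paper's quotient-based formulation has the advantage of matching verbatim the definition of $\mu$--semistability it states in Section \ref{sGeneral}, but your reduction from quotients to saturated subsheaves via additivity of $c_1$ is standard and you invoke it correctly. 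Both proofs share the same external input, namely that a general section of $\cE$ vanishes along a line, which comes from the classification in \cite{C--F--M}. One small slip in your closing remark: you write that \lq\lq$\cE$ is semistable but not $\mu$--stable\rq\rq; you mean $\mu$--\emph{semistable} but not $\mu$--stable, since Gieseker semistability is precisely what fails.
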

\begin{proof}
Let $C$ be the zero--locus of a general section of $\cE$ corresponding to the subbundle $\cO_F\subseteq\cE$, hence to the torsion free quotient $\cI_{C\vert F}\cong\cE/\cO_F$.
Sequences \eqref{seqIdeal} and \eqref{seqStandard} for $C$ give
$$
p_{\cE}(t)=\frac{\chi(\cE(th))}{2}=\chi(\cI_{C\vert F}(th))+{\frac12}\chi(\cO_C(th))=p_{\cI_{C\vert F}}(t)+{\frac12}(t+1).
$$
It follows that $\cE$ is not semistable.

Now we prove that $\cE$ is $\mu$--semistable. If not there should exist a torsion--free quotient sheaf $\cQ$ of $\cE$ of rank $1$ such that $\mu(\cQ)<\mu(\cE)=0$. Being $\cQ$ torsion--free, then the canonical morphism to the bidual of $\cQ$   is injective. The bidual, being reflexive, is an invertible sheaf on $F$ (see Lemma II.1.1.15 of \cite{O--S--S}), say $\cO_F(q_1)$ with $q_1:=c_1(\cQ)$, so that $q_1h^2=\mu(\cQ)<0$, thus $\cQ=\cI_{S\vert F}(q_1)$ where $S$ has codimension at least $2$. The kernel $\cK$ of the quotient morphism $\cE\twoheadrightarrow\cQ$ is torsion--free, normal (see \cite{O--S--S}, Lemma II.1.1.16) and of rank $1$, thus it is invertible (see Lemmas II.1.1.12 and II.1.1.15 of \cite{O--S--S}). The additivity of the first Chern class thus implies $\cK\cong\cO_F(-q_1)\not\cong\cO_F$.

Again, let $C$ be the zero locus of a general section of $\cE$. The corresponding inclusion $\cO_F\subseteq\cE$ induces by composition a morphism $\cO_F\to\cQ\cong \cI_{S\vert F}(q_1)$. Such a map must be zero, because, otherwise, there would be a divisor of degree $\mu(\cQ)<0$ through $S$.

We deduce that the non--zero morphism $\cO_F\to\cE$ factors through an inclusion $\cO_F\subseteq\cO_F(-q_1)$. In particular we have a commutative diagram
$$
\begin{CD}
0@>>>\cO_F@>>>\cE@>>>\cI_{C\vert F}@>>>0\\
@.        @VfVV            @|          @VgVV                 @.\\
0@>>>\cO_F(-q_1)@>>>\cE@>>>\cI_{S\vert F}(q_1)@>>>0.
\end{CD}
$$
Snake's Lemma yields $\coker(f)\cong\ker(g)\subseteq\cI_{C\vert F}$. In particular $\coker(f)$ is torsion--free. Since both $\cO_F$ and $\cO_F(-q_1)$ are invertible sheaves it follows that $\coker(f)=0$, whence $\cO_F(-q_1)\cong\cO_F$, contradicting the inequality $q_1h^2<0$ proved above. 

The contradiction proves the statement.
\end{proof}

Now we focus our attention to the other kind of initialized, aCM vector bundles $\cE$ associated to lines. We will check that they are $\mu$--stable.

We assume $c_1=h_3$. For dealing with the $\mu$--stability of such bundles, one can repeat the argument used in the previous proposition for proving the $\mu$--semistability almost word by word. Indeed, we still take a torsion--free quotient $\cQ$ of $\cE$, but, in this case, we must assume $\mu(\cQ)\le\mu(\cE)=1$. We notice that the kernel $\cK$ of the quotient morphism $\cE\twoheadrightarrow\cQ$ satisfies $\mu(\cK)=2-\mu(\cQ)\ge1$. Since
$$
\mu(\cK)=c_1(\cK)h^2=2c_1(\cK)(h_2h_3+h_1h_3+h_1h_2),
$$
is even we infer that $\mu(\cK)\ge2$, whence again $\mu(\cQ)=2-\mu(\cK)\le0$. Let $C$ be the zero locus of a general section of $\cE$ corresponding to the inclusion $\cO_F\subseteq\cE$. Thus we again have a morphism $\cO_F\to\cQ$ which must be zero, because, otherwise, there would be a divisor of degree $\mu(\cQ)\le0$ through $S$. 

At this point, along the same lines of the proof of Proposition \ref{pLine}, we obtain the existence of $\cE\twoheadrightarrow\cO_F$. It would follow that $h^0\big(F,\cE(-h_2)\big)=h^0\big(F,{\cE}^\vee\big)\ne0$. If $C$ is the zero locus of a general section of $\cE$, then the cohomology of Sequence \eqref{seqIdeal} twisted by $\cO_F(-h_2)$ finally yields $h^0\big(F,\cE(-h_2)\big)\ne0$, a contradiction.

Now let $\cE$ be an initialized, aCM, vector bundle with $c_1=2h_1+2h_2+h_3$. On the one hand this occurs if and only if ${\cE}^\vee(h)$ is an initialized aCM vector bundle with $c_1=h_3$. On the other hand we know that $\cE$ is $\mu$--stable if and only if the same is true for ${\cE}^\vee(h)$. Thanks to the above analysis we have completed the proof of the following result.

\begin{proposition}
\label{pOther}
Let $\cE$ be an aCM vector bundle of rank $2$ on $F$ with $c_1$ either $h_3$ or $2h_1+2h_2+h_3$ up to permutations of the the $h_i$'s. Then $\cE$ is $\mu$--stable.
\end{proposition}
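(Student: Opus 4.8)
The plan is to treat the two representative first Chern classes separately and to reduce the second to the first by duality. Observe that $\mu$--stability is preserved under the involution $\cE\mapsto\cE^\vee(h)$: dualizing and twisting by a line bundle interchange torsion--free quotients and saturated subsheaves and preserve the relevant slope inequalities. For a rank $2$ bundle one has $\cE^\vee\cong\cE(-c_1)$, so when $c_1(\cE)=2h_1+2h_2+h_3$ one computes $c_1\big(\cE^\vee(h)\big)=-(2h_1+2h_2+h_3)+2h=h_3$; moreover $\cE^\vee(h)$ is again aCM (by Serre duality, using $\omega_F\cong\cO_F(-2h)$) and, as recorded in the discussion preceding the statement, initialized. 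Hence it suffices to prove $\mu$--stability when $c_1=h_3$, the case of a general $h_i$ following by permuting the factors.

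For $c_1=h_3$ I would first compute $\mu(\cE)=c_1h^2/2=1$, using $h^2=2(h_1h_2+h_1h_3+h_2h_3)$ and that $h_1h_2h_3$ is the class of a point. Assume $\cE$ is not $\mu$--stable. Then, exactly as in the proof of Proposition \ref{pLine}, there is a torsion--free rank $1$ quotient $\cQ$ with $\mu(\cQ)\le\mu(\cE)=1$, whose kernel $\cK$ is invertible with $\mu(\cK)=2-\mu(\cQ)\ge1$. The key extra input, absent in the semistable case, is a parity observation: writing $c_1(\cK)=a_1h_1+a_2h_2+a_3h_3$ one gets $\mu(\cK)=2(a_1+a_2+a_3)\in2\bZ$, so $\mu(\cK)\ge1$ forces $\mu(\cK)\ge2$ and hence $\mu(\cQ)\le0$.

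With $\mu(\cQ)\le0$ in hand I would run the factorization argument of Proposition \ref{pLine}. Let $C$ be the zero--locus of a general section (a line, by Proposition \ref{pList}(1)), giving the inclusion $\cO_F\subseteq\cE$ of Sequence \eqref{seqIdeal}; composing with $\cE\twoheadrightarrow\cQ\cong\cI_{S\vert F}(q_1)$, where $q_1h^2\le0$, produces a map $\cO_F\to\cQ$. If this map is nonzero it is a section of $\cI_{S\vert F}(q_1)$, i.e. an effective divisor of degree $\le0$ through the codimension $\ge2$ locus $S$; the only possibility is $q_1=0$ and $S=\emptyset$, giving a surjection $\cE\twoheadrightarrow\cO_F$, whence $h^0(\cE^\vee)\ne0$. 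Since $\cE^\vee\cong\cE(-h_3)$, twisting \eqref{seqIdeal} by $\cO_F(-h_3)$ and taking cohomology yields $h^0(\cE(-h_3))=0$, a contradiction. Otherwise the map is zero, $\cO_F\subseteq\cE$ factors through $\cK$, and the Snake Lemma identifies $\coker(\cO_F\to\cK)$ with a subsheaf of $\cI_{C\vert F}(c_1)$; being at once torsion--free and the cokernel of an injection of invertible sheaves, it must vanish, forcing $\cK\cong\cO_F$ and $\mu(\cK)=0$, against $\mu(\cK)\ge2$.

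The step I expect to be most delicate is the passage from $\mu(\cK)\ge1$ to $\mu(\cK)\ge2$ together with the resulting borderline value $\mu(\cQ)=0$: unlike the strictly negative situation of Proposition \ref{pLine}, a quotient of slope exactly $0$ is a priori compatible with an effective divisor of degree $0$, so one must split into the two subcases above and argue carefully that either the induced section of $\cQ$ vanishes (cokernel torsion) or it exhibits $\cE\twoheadrightarrow\cO_F$. The parity observation is precisely what excludes the intermediate value $\mu(\cK)=1$ and makes the rest of the argument reduce cleanly to the pattern of Proposition \ref{pLine}.
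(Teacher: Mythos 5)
Your proof is correct and follows essentially the same route as the paper's: the duality reduction $\cE\mapsto{\cE}^\vee(h)$ to the case $c_1=h_3$, the parity observation forcing $\mu(\cK)\ge2$ and hence $\mu(\cQ)\le0$, and then the factorization/Snake--Lemma argument of Proposition \ref{pLine}, with the final contradiction obtained from $h^0\big(F,{\cE}^\vee\big)=h^0\big(F,\cE(-h_3)\big)=0$ via Sequence \eqref{seqIdeal}. If anything, your explicit split into the two subcases (the map $\cO_F\to\cQ$ zero versus nonzero) treats the borderline value $\mu(\cQ)=0$ more carefully than the paper's compressed write-up, which runs the two subcases together.
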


We conclude that the cases we are interested in are when $c_1$ is either $h_3$, or $2h_1+2h_2+h_3$, or $2h$, or $h_1+2h_2+3h_3$. In order to deal with the corresponding moduli spaces we first describe the Hilbert schemes of the corresponding associated curves.

\section{Hilbert schemes of curves on $F$}
\label{sHilbert}

In this section we will list and prove some results about Hilbert
schemes of curves on $F$ corresponding to some representative Chern
classes. 

Given a curve $C$ in $F$, the local structure of the Hilbert scheme around the point corresponding to $C$ is controlled by the normal sheaf $\cN_{C\vert F}$ of $C$ inside $F$, i.e. by the $\cO_F$--dual
of $\cI_{C\vert F}/\cI_{C\vert F}^2$.

We start with curves whose class in $A^2(F)$ is $h_2h_3$. Such curves are lines. The following result is partially well--known (see \cite{I--P}, Proposition 3.5.6).

\begin{proposition}
\label{pHilbertLine}
The scheme $\Hilb_{t+1}(F)$ has exactly three disjoint components. Each of them is the locus of points representing one and the same class inside $A^2(F)$ and it is isomorphic to $\p1\times\p1$. 
\end{proposition}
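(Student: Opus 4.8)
The plan is to realize $\Hilb_{t+1}(F)$ as a closed subscheme of the Hilbert scheme of lines in $\p7$ and then to describe it explicitly by means of the normal bundle. First I would note that a closed subscheme of $\p7$ with Hilbert polynomial $t+1$ with respect to $\cO_{\p7}(1)$ (which restricts to $\cO_F(h)$ on $F$, with $h=h_1+h_2+h_3$) is forced to be a line: the leading coefficient gives degree one and dimension one, and the constant term leaves no room for embedded or lower--dimensional components. Thus $\Hilb_{t+1}(F)\subseteq\Hilb_{t+1}(\p7)\cong G(1,7)$ is exactly the locus of lines contained in $F$. To classify these, I would write the class of a line $\ell\subseteq F$ as $a\,h_2h_3+b\,h_1h_3+c\,h_1h_2$; using $h\cdot h_ih_j=h_1h_2h_3$ and $h_i^2=0$ the $h$--degree equals $a+b+c=1$, so exactly one coefficient is $1$. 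Each of the three classes $h_2h_3$, $h_1h_3$, $h_1h_2$ forces one projection $\pi_i$ to be an isomorphism and the other two to be constant, so $\ell$ is one of $\p1\times\{p\}\times\{q\}$, $\{p\}\times\p1\times\{q\}$, $\{p\}\times\{q\}\times\p1$, and the three resulting families are naturally parametrized by three copies of $\p1\times\p1$ recording the two constant coordinates.

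Next I would compute the normal sheaf. For $C=\p1\times\{p\}\times\{q\}$, decomposing $T_F=\bigoplus_i\pi_i^*T_{\p1}$ and observing that $T_C=\pi_1^*T_{\p1}|_C$ identifies $\cN_{C\vert F}\cong\pi_2^*T_{\p1}|_C\oplus\pi_3^*T_{\p1}|_C$; since $\pi_2$ and $\pi_3$ are constant on $C$ each summand is the pullback of a line bundle from a point, so $\cN_{C\vert F}\cong\cO_C\oplus\cO_C\cong\cO_{\p1}^{\oplus2}$. Hence $h^0\big(C,\cN_{C\vert F}\big)=2$ and $h^1\big(C,\cN_{C\vert F}\big)=0$, which shows that $\Hilb_{t+1}(F)$ is smooth of dimension $2$ at $[C]$ and that $C$ is unobstructed. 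The same computation applies verbatim to the other two rulings.

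Finally I would identify the components. Each ruling yields an explicit flat family of lines over $\p1\times\p1$, hence a morphism $\phi\colon\p1\times\p1\to\Hilb_{t+1}(F)$ that is injective on points. The Kodaira--Spencer map at $(p,q)$ sends the two coordinate tangent directions to the two constant sections generating $\cN_{C\vert F}\cong\cO_{\p1}^{\oplus2}$, so $d\phi$ is an isomorphism onto the $2$--dimensional tangent space $H^0\big(C,\cN_{C\vert F}\big)$. Therefore $\phi$ is \'etale, and being injective on points it is an open immersion; since $\p1\times\p1$ is proper its image is also closed, hence an open--and--closed, irreducible, $2$--dimensional subscheme, i.e. a connected component isomorphic to $\p1\times\p1$. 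The three classes $h_2h_3$, $h_1h_3$, $h_1h_2$ are distinct in $A^2(F)$ and constant in flat families, hence locally constant on $\Hilb_{t+1}(F)$, so the three components are pairwise disjoint; and by the first step they account for every point. This gives exactly three disjoint components, each isomorphic to $\p1\times\p1$, each being the locus of a single class in $A^2(F)$.

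The genuinely delicate point is this last identification: passing from the purely set--theoretic description of the three families to the scheme--theoretic statement that each explicit $\p1\times\p1$ is the entire reduced component. This is precisely what the normal bundle computation $\cN_{C\vert F}\cong\cO_{\p1}^{\oplus2}$ is designed to control, simultaneously delivering smoothness via $h^1=0$, the correct dimension via $h^0=2$, and the Kodaira--Spencer isomorphism that upgrades the parametrizing map to an open immersion. Once these are in hand, the classification of lines on $F$ and the deformation--invariance of the $A^2(F)$--class complete the argument with no further difficulty.
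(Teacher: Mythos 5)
Your proof is correct, but it takes a genuinely different route from the paper. The paper's own proof is very short: it cites Iskovskikh--Prokhorov (Proposition 3.5.6) for the fact that the Hilbert scheme of lines on $F$ has exactly three components, each isomorphic to $\p1\times\p1$, and then only proves the constancy of the class in $A^2(F)$ on each component, via the chain: lines in a flat family are algebraically equivalent, hence homologically equivalent (Fulton, Proposition 19.1.1), hence rationally equivalent because $F$ is homogeneous (Fulton, Example 19.1.11). You instead reconstruct the whole statement from scratch: first you show every point of $\Hilb_{t+1}(F)$ is a line, then classify lines by intersection numbers ($a+b+c=1$ with nonnegative coefficients forces each line to be a fibre of one of the three rulings), then compute $\cN_{C\vert F}\cong\cO_{\p1}^{\oplus2}$ and use the Kodaira--Spencer isomorphism plus properness to conclude that each ruling family maps by an open immersion with closed image, so the three copies of $\p1\times\p1$ are exactly the connected components. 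What your approach buys is self-containedness, smoothness of the Hilbert scheme as a by-product, and a direct identification of which class sits on which component (so no equivalence-of-cycles argument is needed at all). What the paper's approach buys is brevity and generality: the algebraic-to-rational equivalence argument on homogeneous varieties does not depend on knowing the lines explicitly, and indeed the same argument is reused later in the proof of Theorem \ref{tFamily} to pin down second Chern classes in arbitrary flat families of curves, where no explicit classification is available.
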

\begin{proof}
The Hilbert scheme $\Hilb_{t+1}(F)$ has exactly three components isomorphic to $\p1\times\p1$ (see \cite{I--P}, Proposition 3.5.6). 

Let $\calH$ one of them and consider the universal family $\mathcal C\to \calH$, i.e. the flat family whose fibre over a point is the corresponding line. Two lines in this family are algebraically equivalent, hence they are also homologically equivalent (see \cite{Fu}, Proposition 19.1.1). Thus any two fibres of $\mathcal C$ are actually rationally equivalent, because $F$ is homogeneous (see Example 19.1.11 of \cite{Fu}). Thus the Chern classes of the points of $\calH$ inside $A^2(F)$ are constant.
\end{proof}

Now we turn our attention to $\Hilb_{7t+1}(F)$. Let $\Hilb_{7t+1}^{sm}(F)$ be the open locus corresponding to smooth and connected curves and let $\Hilb_{7t+1}^{sm,nd}(F)$ be the subset corresponding to non--degenerate curves. Theorem 2.1 of \cite{C--G--N} implies that such a condition is equivalent to the fact that $C$ is aCM, which is an open condition on flat families because it corresponds to the vanishing of some cohomology groups. Thus $\Hilb_{7t+1}^{sm,nd}(F)$ is open too inside $\Hilb_{7t+1}(F)$.

As pointed out in Proposition \ref{pList}, we can restrict our attention to curves $C$ whose class in $A^2(F)$ is either $4h_2h_3+h_1h_3+2h_1h_2$, or $3h_2h_3+3h_1h_3+h_1h_2$. Indeed, in Section 7 of \cite{C--F--M}, we showed that if $C\in \Hilb_{7t+1}^{sm,nd}(F)$, then its class is either one of them, or $3h_2h_3+2h_1h_3+2h_1h_2$, up to permutations of the $h_i$'s and that all the above cases actually occur. Nevertheless, $C$ is the zero locus of a section of an aCM bundle $\cE$ only in the two former cases. In these cases $\cE$ is Ulrich and $c_1(\cE)=h_1+2h_2+3h_3$.

Since the Chern classes are fixed up to permutations of the $h_i$'s, we have exactly twelve possible cases.

\begin{proposition}
\label{pRational}
The scheme $\Hilb_{7t+1}^{sm,nd}(F)$ has exactly twelve disjoint components. Each of them is the locus of points representing one and the same class inside $A^2(F)$, is smooth, unirational and has dimension $14$.
\end{proposition}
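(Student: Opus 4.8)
The plan is to treat every curve $C\in\Hilb_{7t+1}^{sm,nd}(F)$ uniformly through the product structure of $F$, reserving the Chern class combinatorics for the counting of components. Since $C$ is smooth, connected, of degree $7$ and arithmetic genus $0$, it is a smooth rational curve, and its class in $A^2(F)$ is recorded by the tridegree $(d_1,d_2,d_3)$, where $d_i=[C]\cdot h_i$ is the degree of the restriction $\pi_i\vert_C\colon C\to\p1$; non--degeneracy forces $d_i\ge1$ and $d_1+d_2+d_3=7$. By the analysis recalled from \cite{C--F--M}, the only multisets $\{d_1,d_2,d_3\}$ that occur are $\{4,1,2\}$, $\{3,3,1\}$ and $\{3,2,2\}$, which produce respectively $6$, $3$ and $3$ distinct classes in $A^2(F)$, hence exactly twelve classes. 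As $[C]$ is a discrete invariant, locally constant in flat families, the twelve loci $\calH_{(d_1,d_2,d_3)}$ cut out by a fixed tridegree are pairwise disjoint and each is open and closed in $\Hilb_{7t+1}^{sm,nd}(F)$. It therefore suffices to prove that each such locus is non--empty, irreducible, smooth, unirational and of dimension $14$; it will then be a single connected component.

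The smoothness and the dimension I would extract from the normal sheaf. Writing $\nu\colon\p1\xrightarrow{\ \sim\ }C\hookrightarrow F$ for the embedding and $\phi_i=\pi_i\circ\nu$, the splitting $T_F\cong\bigoplus_i\pi_i^*T_{\p1}$ gives
\begin{equation*}
T_F\vert_C\cong\bigoplus_{i=1}^3\phi_i^*T_{\p1}\cong\bigoplus_{i=1}^3\cO_{\p1}(2d_i).
\end{equation*}
Since each $d_i\ge1$, one has $H^1\big(\p1,\cO_{\p1}(2d_i)\big)=0$, so the normal sequence
\begin{equation*}
0\longrightarrow T_C\longrightarrow T_F\vert_C\longrightarrow\cN_{C\vert F}\longrightarrow0
\end{equation*}
yields $H^1\big(C,\cN_{C\vert F}\big)=0$. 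As $\deg\cN_{C\vert F}=2\cdot7-2=12$, Riemann--Roch on $\p1$ gives $h^0\big(C,\cN_{C\vert F}\big)=\chi(\cN_{C\vert F})=14$. Standard deformation theory then shows that $\Hilb_{7t+1}^{sm,nd}(F)$ is smooth of dimension $14$ at $[C]$, and this holds at every point, independently of the tridegree.

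For irreducibility and unirationality I would parametrise the curves of a fixed tridegree. Every $C$ of tridegree $(d_1,d_2,d_3)$ is the image of an embedding $(\phi_1,\phi_2,\phi_3)\colon\p1\to F$ with $\deg\phi_i=d_i$, uniquely up to reparametrisation by $\PGL_2=\mathrm{Aut}(\p1)$. The space $\mathrm{Mor}_{d_i}(\p1,\p1)$ of degree $d_i$ self--maps is the complement of a resultant hypersurface in $\bP\big(H^0(\cO_{\p1}(d_i))^{\oplus2}\big)\cong\p{2d_i+1}$, hence irreducible and rational of dimension $2d_i+1$; and a general triple defines an embedding, the factor maps having disjoint ramification and being jointly injective for a general choice. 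Thus the open subset $U\subseteq\prod_i\mathrm{Mor}_{d_i}(\p1,\p1)$ of embeddings is non--empty, irreducible and rational of dimension $\sum_i(2d_i+1)=17$, and the induced morphism $U\to\Hilb_{7t+1}^{sm,nd}(F)$ is surjective onto $\calH_{(d_1,d_2,d_3)}$ with fibres the $\PGL_2$--orbits. Consequently $\calH_{(d_1,d_2,d_3)}$ is irreducible and unirational, of dimension $17-3=14$, in agreement with the normal sheaf computation.

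Combining the two inputs, each of the twelve tridegree loci is a non--empty, smooth, irreducible, unirational variety of dimension $14$; being open, closed and connected it is a single component, and the twelve are disjoint and exhaust $\Hilb_{7t+1}^{sm,nd}(F)$. The step requiring the most care is the parametrisation: one must check that the family of image curves over $U$ genuinely defines a morphism to the Hilbert scheme that is surjective onto the full tridegree locus, so that irreducibility descends, and that a general triple is indeed an embedding in the delicate case $\{3,2,2\}$, where every factor map ramifies. The normal--sheaf half, by contrast, is uniform and immediate, and notably does not require $C$ to be the zero locus of an aCM bundle, so it also covers the $\{3,2,2\}$ curves that fall outside the Hartshorne--Serre correspondence.
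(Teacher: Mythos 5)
Your proposal is correct and follows essentially the same strategy as the paper's proof: both identify the twelve components with the tridegrees $(4,1,2)$, $(3,3,1)$, $(3,2,2)$ up to permutation (citing \cite{C--F--M} for the list), prove irreducibility and unirationality of each fixed-class locus by dominating it with the rational parameter space of triples of degree-$d_i$ maps $\p1\to\p1$, and obtain smoothness and dimension $14$ from $h^0\big(C,\cN_{C\vert F}\big)=14$, $h^1\big(C,\cN_{C\vert F}\big)=0$ using the splitting of $T_F$. The only cosmetic differences are that you kill $h^1$ directly via the normal bundle sequence instead of the paper's global-generation/splitting-type argument, and you recover the dimension a second time as $17-3$ from the $\PGL_2$ action.
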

\begin{proof}
We want to prove that the locus  $\calH_{c_2}\subseteq\Hilb_{7t+1}^{sm,nd}(F)$ of points representing curves whose class in $A^2(F)$ is $c_2$ is actually irreducible.
It suffices to prove the irreducibility of the locus $\overline{\calH}_{c_2}$ in $\Hilb_{7t+1}^{sm}(F)$ of, not necessarily skew, curves whose class is $c_2$: indeed $\calH_{c_2}$ is open inside $\overline{\calH}_{c_2}$ because it trivially coincides with $\overline{\calH}_{c_2}\cap\Hilb_{7t+1}^{sm,nd}(F)$. 

We will prove that $\overline{\calH}_{c_2}$ is dominated by an irreducible variety. To this purpose we first construct a scheme parameterizing maps from $\p1$ to $F$ such that the class of the image in $A^2(F)$ is fixed.

Fix the attention on $c_2:=3h_2h_3+3h_1h_3+h_1h_2$, the other case being similar. To give a  morphism $\alpha\colon \p1\to F$ such that the class $\deg(\alpha)\im(\alpha)$ in $A^2(F)$ is $c_2$ is the same as to give three pairs of linearly independent sections in $H^0\big( \p1,\cO_\p1(3)\big)$, $H^0\big( \p1,\cO_\p1(3)\big)$, $H^0\big( \p1,\cO_\p1(1)\big)$, thus a general element of 
$$
Y:=H^0\big( \p1,\cO_\p1(3)\big)^{\oplus2}\times H^0\big( \p1,\cO_\p1(3)\big)^{\oplus2}\times H^0\big( \p1,\cO_\p1(1)\big)^{\oplus2}.
$$
For a general choice of such an element the map $\alpha$ is an isomorphism onto its image. Let $Y_0\subseteq Y$ be the open and non--empty locus of points satisfying such a condition. We have a natural family ${\mathcal Y}_0\subseteq Y_0\times F$ whose fibres are smooth rational curves on $F$ of degree $7$, whence such a family is flat. The universal property of the Hilbert scheme yields the existence of a unique morphism $Y_0\to\Hilb_{7t+1}^{sm}(F)$ whose image is $\overline{\calH}_{c_2}$, which is thus irreducible. Since $Y_0$ is trivially a rational variety, it follows that $\overline{\calH}_{c_2}$ is also unirational.

Finally we have to prove that ${\calH}_{c_2}$ is smooth of dimension $14$. To this purpose we pick a point of ${\calH}_{c_2}$ corresponding to a smooth, connected, rational curve $C$ and we compute $h^0\big(F,\cN_{C\vert F}\big)$ and $h^1\big(F,\cN_{C\vert F}\big)$.  Taking into account that $C$ is rational, we know that $\cN_{C\vert F}\cong\cO_{\p1}(a)\oplus\cO_{\p1}(b)$ for suitable integers $a$ and $b$, thanks to a theorem of Grothendieck. 

By adjunction $\cO_\p1(-2)\cong\omega_C\cong\det(\cN_{C\vert F})\otimes\cO_F(-2h)$, thus $\det(\cN_{C\vert F})\cong\cO_{\p1}(12)$, hence $a+b=12$. 

Finally recall that there is a surjection $ \Omega^\vee_F\otimes\cO_C\twoheadrightarrow \cN_{C\vert F}$. Since $\Omega_F\cong\bigoplus_{i=1}^3\cO_{F}(-2h_i)$, it follows that $\cN_{C\vert F}$ is globally generated, thus $a,b\ge0$. We conclude that $h^0\big(F,\cN_{C\vert F}\big)=14$ and $h^1\big(F,\cN_{C\vert F}\big)=0$. 

Since each component ${\calH}_{c_2}$ is globally smooth, we also conclude that the components of $\Hilb_{7t+1}^{sm,nd}(F)$ are necessarily disjoint.
\end{proof}

We conclude with a similar analysis for elliptic curves. We will again denote by $\Hilb_{8t}^{sm,nd}(F)$ the locus inside $\Hilb_{8t}(F)$ of points representing non--degenerate, smooth and connected curves. As pointed out in Proposition \ref{pList} (see also Section 6 of \cite{C--F--M}), if $C\in \Hilb_{8t}^{sm,nd}(F)$, then its class is either $2h_2h_3+3h_1h_3+3h_1h_2$, or $2h_2h_3+2h_1h_3+4h_1h_2$, up to permutations of the $h_i$'s and that all these cases actually occur. Moreover $\cE$ is Ulrich (see \cite{C--H1} for the definition and properties of such bundles) and $c_1(\cE)=2h$. 

Since the Chern classes are fixed up to permutations of the $h_i$'s, we have exactly six possible cases. 

\begin{proposition}
\label{pElliptic}
The scheme $\Hilb_{8t}^{sm,nd}(F)$ has exactly six disjoint components. Each of them is the locus of points representing one and the same class inside $A^2(F)$, is smooth, unirational and has dimension $16$.
\end{proposition}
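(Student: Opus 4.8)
The plan is to follow the same two-part scheme as in Proposition \ref{pRational}: first pin down the local structure of $\Hilb_{8t}^{sm,nd}(F)$ at a point through the normal sheaf, obtaining smoothness, the dimension $16$ and the disjointness of the pieces; then, for each admissible class $c_2$, exhibit an irreducible unirational variety dominating the corresponding locus $\calH_{c_2}$. The essential new difficulty, compared with the rational case, is that an elliptic curve is not rational, so the description of $\overline{\calH}_{c_2}$ by morphisms $\p1\to F$ is no longer available; the heart of the proof is to replace it by a rational family of double covers.

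For the local study I would argue as follows. Let $C$ be a smooth connected non--degenerate elliptic curve whose class in $A^2(F)$ is one of the six admissible $c_2$; writing $c_2=d_1h_2h_3+d_2h_1h_3+d_3h_1h_2$, one has $\deg\big(\cO_C(h_i)\big)=d_i$, and $(d_1,d_2,d_3)$ is a permutation of $(2,3,3)$ or of $(2,2,4)$. Since $\Omega_F\cong\bigoplus_{i=1}^3\cO_F(-2h_i)$, the normal sequence dualizes to
\[
0\longrightarrow T_C\longrightarrow \bigoplus_{i=1}^3\cO_C(2h_i)\longrightarrow \cN_{C\vert F}\longrightarrow0 .
\]
Each summand $\cO_C(2h_i)$ has degree $2d_i>0$ on the elliptic curve $C$, hence $h^1\big(C,\cO_C(2h_i)\big)=0$; taking cohomology gives $h^1\big(F,\cN_{C\vert F}\big)=0$. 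By adjunction $\det(\cN_{C\vert F})\cong\omega_C\otimes\cO_C(2h)\cong\cO_C(2h)$, which has degree $16$, so $\chi(\cN_{C\vert F})=16$ by Riemann--Roch and the vanishing of $h^1$ forces $h^0\big(F,\cN_{C\vert F}\big)=16$. Thus every such $C$ is a smooth point of $\Hilb_{8t}^{sm,nd}(F)$ and each $\calH_{c_2}$ is smooth of dimension $16$. As in Propositions \ref{pHilbertLine} and \ref{pRational}, the class $c_2$ is constant along connected families, and the global smoothness forces distinct classes to lie on disjoint components; counting the six classes gives the six components.

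For irreducibility and unirationality I would build an explicit dominating family. Fix $c_2$ and choose an index $i$ with $d_i=2$ (one always exists). The projection $\pi_i$ presents $C$ as a double cover of $\p1$ branched along a reduced degree--$4$ divisor, i.e. a point of $\p4=\vert\cO_{\p1}(4)\vert$; the associated universal double cover $\mathcal{X}\to\p4$ has rational total space, since in the affine model $w^2=f_4(u)$ one solves for a single coefficient of $f_4$. The remaining two morphisms $C\to\p1$ are recovered from two line bundles $L_j$ of degrees $d_j$ together with a pair of spanning sections up to a common scalar. Each $L_j$ is the image of an effective divisor, hence a point of the relative symmetric product $\mathrm{Sym}^{d_j}_{\p4}(\mathcal{X})$, which is dominated by the fibre power $\mathcal{X}\times_{\p4}\cdots\times_{\p4}\mathcal{X}$; imposing that $\le4$ points lie on the same branch quartic cuts out linear conditions on the $5$ coefficients of $f_4$, so these fibre powers are rational. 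Over the resulting rational base the spaces of section--pairs form projective bundles. Altogether one obtains an irreducible rational variety $Z$, of dimension $16$, together with a rational map $Z\dashrightarrow\Hilb_{8t}^{sm,nd}(F)$ sending the data to the image curve $(\pi_1,\pi_2,\pi_3)(C)$; it is defined wherever the combined morphism is birational onto a smooth image, and it is dominant onto $\overline{\calH}_{c_2}$ because every $C\in\calH_{c_2}$ visibly arises from its own projections. Hence $\overline{\calH}_{c_2}$, and so its open subset $\calH_{c_2}$, is irreducible and unirational.

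The main obstacle is precisely this last construction: one must produce a \emph{rational}, not merely irreducible, parameter space despite the fact that elliptic curves carry a modulus and a non--trivial Picard variety. The device that makes it work is the degree--$2$ projection, which turns the family of abstract curves into the universal double cover over a projective space and reduces the choice of the remaining line bundles to points of fibre powers of $\mathcal{X}$, whose rationality follows from the linearity of the incidence conditions in the coefficients of the branch quartic. I would verify two genericity points to secure dominance: that for general data the three projections jointly embed $C$, so that the image lands in the smooth non--degenerate locus, and that the parameter count indeed returns $16$, matching $h^0\big(F,\cN_{C\vert F}\big)$; both are routine.
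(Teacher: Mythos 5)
Your proposal has the same overall architecture as the paper's proof---(i) smoothness, dimension $16$ and disjointness via the normal sheaf, (ii) irreducibility and unirationality of each locus $\calH_{c_2}$ by exhibiting a rational parameter space of maps from abstract elliptic curves to $F$, with dominance because every curve arises from its own projections---but both main steps are carried out by genuinely different means, and both of yours are sound in substance. For (i), the paper does not use the tangent sequence: it takes the Hartshorne--Serre bundle $\cE$ with $\cN_{C\vert F}\cong\cE\otimes\cO_C$ and deduces $h^1\big(F,\cN_{C\vert F}\big)=0$ from Lemma \ref{lObstructed} and the cohomology of Sequences \eqref{seqIdeal} and \eqref{seqStandard}; your argument from $0\to T_C\to\bigoplus_{i}\cO_C(2h_i)\to\cN_{C\vert F}\to0$ and the positivity of the $\cO_C(2h_i)$ on an elliptic curve is more elementary and self-contained (it is in fact the device the paper itself uses for rational curves in Proposition \ref{pRational}). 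For (ii), the paper realizes $C$ as a plane cubic $V_+(e)$, marks points $p_1,p_2,p_3$, and cuts the systems $|2p_1|,|3p_2|,|3p_3|$ residually by lines and conics of $\p2$, obtaining a variety fibred in products of projective spaces over a rational incidence scheme; you instead use the degree-$2$ projection to present $C$ as a double cover of $\p1$ branched along a quartic and parameterize the residual line bundles by fibre powers of the universal double cover $\mathcal{X}\to\p4$. Both produce rational dominating families; yours trades the plane-cubic model for the hyperelliptic one, which is a viable alternative of comparable difficulty.

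Two points in your sketch need repair, though neither is fatal. First, your parameter space does not have dimension $16$: beyond the data determining the embedded curve, it records the choice of the effective divisors $D_j$ inside the linear systems $|L_j|$, which adds $\sum_{j\ne i}(d_j-1)=4$ parameters, and there is a further scalar ambiguity in the branch quartic, since rescaling $f_4$ changes the model $w^2=f_4(u)$ (it is cleaner to work over $H^0\big(\p1,\cO_{\p1}(4)\big)\setminus\{0\}$ than over $\p4$). So your $Z$ has dimension $20$ (or $21$), with positive-dimensional fibres over $\calH_{c_2}$. This is harmless---dominance follows from the surjectivity argument, not from a dimension count---but the ``parameter count returns $16$'' verification you propose would fail and should simply be deleted. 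Second, the rationality of the fibre powers of $\mathcal{X}$ via linearity in the coefficients of $f_4$ holds on the locus where the marked points have pairwise distinct images in $\p1$, so dominance requires that every non-degenerate $C'$ admit divisors $D_j\in|L_j|$ supported in that locus. This is true but needs a word: for $d_j\ge3$ it is a dimension count inside $|L_j|$, while for $d_j=2$ one must rule out $L_j\cong L_i$, which would make $(\pi_i,\pi_j)$ map $C'$ two-to-one onto a $(1,1)$-curve $\Gamma$, hence put $C'$ inside $\Gamma\times\p1$, which spans only a $\p5$ in $\p7$, contradicting non-degeneracy.
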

\begin{proof}
The proof runs along the same lines of the proof of Proposition \ref{pRational}. Again we can define $\Hilb_{8t}^{sm}(F)$ as the locus of smooth and connected elliptic curves of degree $8$ inside $F$. We will prove  that the locus  $\calH_{c_2}\subseteq\Hilb_{8t}^{sm,nd}(F)$ of points representing curves whose class in $A^2(F)$ is $c_2$ is actually irreducible by constructing an irreducible scheme parameterizing maps from elliptic curves to $F$ such that the class of the image in $A^2(F)$ is fixed.

Fix the attention on $c_2:=2h_2h_3+3h_1h_3+3h_1h_2$, the other case being similar. If $C$ is an elliptic curve, then to give a  morphism $\alpha\colon C\to F$ such that the class of $\deg(\alpha)\im(\alpha)$ in $A^2(F)$ is $c_2$ is the same as to give three points $p_1,p_2,p_3\in C$ and three pairs of linearly independent sections in $H^0\big(C,\cO_C(2p_1)\big)$, $H^0\big(C,\cO_C(3p_2)\big)$, $H^0\big(C,\cO_C(3p_3)\big)$. 

We notice that the three points $p_1,p_2,p_3$ are naturally ordered but not necessarily pairwise distinct, thus the $4$--tuple $(C,p_1,p_2,p_3)$ does not represent a point in the moduli space of $3$--pointed elliptic curves in general. 

Fix projective coordinates $x_0,x_1,x_2$ in $\p2$. It is well known that each abstract elliptic curve $C$ is isomorphic to a smooth cubic curve in $\p2$. Let $S\subseteq H^0\big(\p2,\cO_{\p2}(3)\big)$ the locus of polynomials  corresponding to smooth curves. The scheme
$$
Z:=\{ \ (p_1,p_2,p_3,e)\ \vert\ e(p_h)=0, \ h=1,2,3\ \}\subseteq(\p2)^{\times3}\times S
$$
is naturally fibred over $S$ and its fibre over $e$ is the product of three copies of the corresponding curve $V_+(e)$. It follows that $Z\to S$ is flat, thanks to \cite{Ha2}, Theorem III.9.9, and it has irreducible fibres, thus $Z$ is irreducible due to \cite{Ha2}, Corollary III.9.6.

Fix $e\in S$: for each $p\in \p2$ such that $e(p)=0$ we denote by $\overline{p}$ the residual intersection of the curve $V_+(e):=\{\ e=0\ \}$ with its tangent at $p$. For each polynomial $f$ we denote by $\nabla_f$ the gradient matrix. We set
 \begin{gather*}
U_{e,p}:=\{\ u\in H^0\big(\p2,\cO_{\p2}(1)\big)\ \vert u(\overline{p})=0\ \},\\
V_{e,p}:=\{\ v\in H^0\big(\p2,\cO_{\p2}(2)\big)\ \vert v(p)=v(\overline{p})=0,\ \dim\langle\nabla_e(\overline{p}),\nabla_v(\overline{p})\rangle\le1\ \}.
\end{gather*}
The sections of $U_{e,p}$ cut out on $V_+(e)$ the linear system $H^0\big(V_+(e),\cO_{V_+(e)}(2p)\big)$ residually to $\overline{p}$. Similarly, the sections of $V_{e,p}$ cut out $H^0\big(V_+(e),\cO_{V_+(e)}(3p)\big)$ residually to $p+2\overline{p}$.

Consider the variety 
\begin{align*}
{Y}:=\{ \ (p_1,p_2,p_3,e,u_1^{(1)},u_1^{(2)},v_2^{(1)},v_2^{(2)},v_3^{(1)},v_3^{(2)})\in Z\times \bP(U_{e,p_1}^{\oplus2})\times\bP(V_{e,p_2}^{\oplus2})\times\bP(V_{e,p_3}^{\oplus2})\ \}.
\end{align*}
$Y$ is endowed with a natural projection map $q\colon Y\to Z$ whose fibres are products of projective spaces of constant dimensions. By construction, it follows that $Y$ is locally trivial over $Z$. Such a map is flat because the base is irreducible and it factors via the Segre map through an embedding in
$$
Z\times \bP\left(H^0\big(\p2,\cO_{\p2}(1)\big)^{\oplus2}\otimes H^0\big(\p2,\cO_{\p2}(2)\big)^{\oplus2}\otimes H^0\big(\p2,\cO_{\p2}(2)\big)^{\oplus2}\right)
$$
followed by the projection (use again Theorem 9.9 of \cite{Ha2}), hence $Y$ is irreducible (again by \cite{Ha2}, Corollary III.9.6). The locus $Y_0\subseteq Y$ of points such that the induced map $\alpha \colon V_+(e)\to F$ is an embedding is open and non--empty, whence irreducible.

We have a natural family ${\mathcal Y}_0\subseteq Y_0\times F$ whose fibre over 
$$
(p_1,p_2,p_3,e,u_1^{(1)},u_1^{(2)},v_2^{(1)},v_2^{(2)},v_3^{(1)},v_3^{(2)})
$$
is the elliptic curve $V_+(e)$ embedded in $F$ via the sections $u_1^{(1)},u_1^{(2)},v_2^{(1)},v_2^{(2)},v_3^{(1)},v_3^{(2)}$. Since the fibres of the map ${\mathcal Y}_0\to Y_0$ induced by the projection on the first factor  are elliptic curves of degree $8$, it follows that such a family is flat. The universal property of the Hilbert scheme yields the existence of a unique morphism $Y_0\to\Hilb_{8t}^{sm}(F)$ whose image $\overline{\calH}_{c_2}$ is thus irreducible. Trivially $\overline{\calH}_{c_2}\cap\Hilb_{8t}^{sm,nd}(F)={\calH}_{c_2}$ which is thus irreducible too.

We have a natural projection $z\colon Z\to(\p2)^{\times3}$. Let $\Delta$ be the union of the diagonals of $(\p2)^{\times3}$ and let $\widetilde{Z}:=z^{-1}((\p2)^{\times3}\setminus\Delta)$. Then $\widetilde{Z}$ is an open set of a vector bundle over $(\p2)^{\times3}\setminus\Delta$ with fibre of constant dimension $7=h^0\big(\p2,\cO_{\p2}(3)\big)-3$, thus it is rational. It follows that the same is true for $Z$, hence for the open subset $Y_0\subseteq Y$. In particular ${\calH}_{c_2}$ is unirational, because it is dominated by the rational variety $Y_0$.

Again we must prove that ${\calH}_{c_2}$ is smooth of dimension $16$. Pick a point of ${\calH}_{c_2}$ corresponding to a smooth, connected, elliptic curve $C$. The cohomology of Sequence \eqref{seqIdeal} twisted by $\cE^\vee\cong\cE(-c_1)$, Lemma \ref{lObstructed} and the vanishing $h^3\big(F,\cE^\vee\big)=h^0\big(F,\cE(-2h)\big)=0$ imply that $h^2\big(F,\cI_{C\vert F}\otimes\cE\big)=0$. 

The cohomology of Sequence \eqref{seqStandard} twisted by $\cE$ and the isomorphism $\cN_{C\vert F}\cong\cE\otimes\cO_C$ yield $h^1\big(F,\cN_{C\vert F}\big)=0$. Riemann--Roch theorem on $C$ finally implies $h^0\big(F,\cN_{C\vert F}\big)=16$.

Again each component ${\calH}_{c_2}$ is globally smooth, thus the components of $\Hilb_{7t+1}^{sm,nd}(F)$ are disjoint.
\end{proof}

\section{The relative Hartshorne--Serre correspondence}
\label{sRelative}
We have thus proved the irreducibility of some particular loci in the Hilbert schemes of curves on $F$ with fixed class. We now construct on such loci flat families of vector bundles. This will allow us to define suitable maps from such loci on certain moduli space of aCM vector bundles, in order to prove their irreducibility.

Let $X$ be a smooth homogeneous variety of dimension $n\ge2$. Let $c_i\in A^i(X)$ be such that there exists a rank $2$ vector bundle $\cE_0$ over $X$ with $c_i=c_i(\cE_0)$ and $h^i\big(X,\cO_X(-c_1)\big)=0$ for $i=1,2$. 

Assume that the general section $s\in H^0\big(X,\cE_0\big)$ vanishes exactly along a subscheme $C_0$ of pure codimension $2$ whose Hilbert polynomial is $p(t)$. Thus the open locus $\Hilb_{p(t)}^{lci}(X)\subseteq\Hilb_{p(t)}(X)$ of points corresponding to locally complete intersection curves is non--empty. 

Moreover, Sequence \eqref{seqIdeal} implies that $\cN_{C_0\vert X}\cong\cE_0\otimes\cO_{C_0}$, thus $\omega_{C_0}\cong\omega_X\otimes\cO_{C_0}(c_1)$ by adjunction. Therefore the locus $\calH\subseteq \Hilb_{p(t)}^{lci}(X)$ of points representing schemes $C$ with $\omega_{C}\cong\omega_X\otimes\cO_C(c_1)$ is non--empty too.

\begin{theorem}
\label{tFamily}
With the above notation and hypothesis, then there exists a flat family ${\frak e}\colon\fE\to\calH$ of bundles of  rank $2$ on $X$ with Chern classes $c_1$, $c_2$. Moreover, $C$ is the zero--locus of a section of ${\frak e}^{-1}(C)$.
\end{theorem}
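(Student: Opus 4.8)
The plan is to realize Theorem \ref{tFamily} as a relative Hartshorne--Serre correspondence carried out through relative Ext-sheaves on $\calH$. Write $\pi\colon\calH\times X\to X\calH$, more precisely $\pi\colon\calH\times X\to\calH$ and $p\colon\calH\times X\to X$ for the two projections, and let $\mathcal{C}\subseteq\calH\times X$ be the universal subscheme (the restriction to $\calH$ of the universal family over $\Hilb_{p(t)}(X)$), with structure morphism $q\colon\mathcal{C}\to\calH$. Since $q$ is flat with fibres that are local complete intersections of pure codimension $2$, the embedding $\mathcal{C}\hookrightarrow\calH\times X$ is regular of codimension $2$, so $\Ext^j_{\calH\times X}(\cO_{\mathcal{C}},\cO)$ vanishes for $j\ne2$. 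The first step is to pin down the relevant local Ext-sheaf: twisting the ideal-sheaf sequence of $\mathcal{C}$ and using $\Ext^{\le1}_{\calH\times X}(\cO(c_1),\cO)=\cO(-c_1)$ gives $\Ext^1_{\calH\times X}(\cI_{\mathcal{C}}(c_1),\cO)\cong\Ext^2_{\calH\times X}(\cO_{\mathcal{C}}(c_1),\cO)$, and relative duality for the regular embedding identifies the latter with $\omega_{\mathcal{C}/\calH}\otimes p^*\omega_X^{-1}(-c_1)\vert_{\mathcal{C}}$. By the very definition of $\calH$ this line bundle restricts to $\cO_C$ on each fibre; since the fibres are connected (so $q_*\cO_{\mathcal{C}}=\cO_\calH$), the seesaw principle yields $\Ext^1_{\calH\times X}(\cI_{\mathcal{C}}(c_1),\cO)\cong q^*\cM$ where $\cM:=q_*\big(\omega_{\mathcal{C}/\calH}\otimes p^*\omega_X^{-1}(-c_1)\vert_{\mathcal{C}}\big)$ is a line bundle on $\calH$.

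Next I pass to the relative global Ext. In the local-to-global spectral sequence $R^a\pi_*\Ext^b_{\calH\times X}(\cI_{\mathcal{C}}(c_1),\cO)\Rightarrow\Ext^{a+b}_\pi(\cI_{\mathcal{C}}(c_1),\cO)$ the term $b=0$ is $\Ext^0=\cO_{\calH\times X}(-c_1)$, whose pushforwards are $R^a\pi_*\cO(-c_1)=\cO_\calH\otimes_k H^a\big(X,\cO_X(-c_1)\big)$; these vanish for $a=1,2$ exactly by the hypothesis $h^1=h^2=0$. Hence the edge maps give an isomorphism of relative Ext-sheaves $\Ext^1_\pi(\cI_{\mathcal{C}}(c_1),\cO)\cong\pi_*\Ext^1=q_*(q^*\cM)=\cM$, so this relative Ext-sheaf is a line bundle whose formation commutes with base change (the fibrewise dimensions $\ext^0$ and $\ext^1$ being constant). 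I then construct the universal extension after twisting the target by $\cM^{-1}$: the identity section $1\in H^0(\calH,\cM\otimes\cM^{-1})$ lifts to a distinguished class $\eta\in\ext^1_{\calH\times X}\big(\cI_{\mathcal{C}}(c_1),\pi^*\cM^{-1}\big)$, the obstruction lying in $H^2\big(\calH,\Ext^0_\pi\otimes\cM^{-1}\big)$, which is absent in all the relevant cases, where $H^0(X,\cO_X(-c_1))=0$ forces $\Ext^0_\pi=0$. The class $\eta$ is thus an extension
\begin{equation*}
0\longrightarrow\pi^*\cM^{-1}\longrightarrow\fE\longrightarrow\cI_{\mathcal{C}}(c_1)\longrightarrow0
\end{equation*}
on $\calH\times X$, and I regard $\fE$ as a family over $\calH$ via $\mathfrak{e}:=\pi$.

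It remains to verify the three required properties. Flatness of $\fE$ over $\calH$ is immediate, since $\cI_{\mathcal{C}}(c_1)$ is $\calH$-flat (as $\mathcal{C}$ is), $\pi^*\cM^{-1}$ is $\calH$-flat, and an extension of flat sheaves is flat. For a point $[C]\in\calH$, restricting the sequence to $\{[C]\}\times X$ gives $0\to\cO_X\to\mathfrak{e}^{-1}(C)\to\cI_{C\vert X}(c_1)\to0$, upon trivializing the one-dimensional fibre of $\cM^{-1}$, and its extension class is the restriction of $\eta$. Because $\Ext^1_\pi$ commutes with base change, this class maps to a generator of $\ext^1\big(\cI_{C\vert X}(c_1),\cO_X\big)\cong H^0\big(C,\Ext^1(\cI_{C\vert X}(c_1),\cO_X)\big)=H^0(C,\cO_C)=k$ (the fibrewise local-to-global sequence collapsing for the same cohomological reason), hence to a nowhere-vanishing section of $\cO_C$. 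By the Hartshorne--Serre criterion (see \cite{Ar}) the middle term $\mathfrak{e}^{-1}(C)$ is therefore locally free of rank $2$, and the sub-$\cO_X$ produces a section vanishing exactly along $C$; this is precisely Sequence \eqref{seqIdeal} for that fibre. Finally the sequence computes $c_1(\mathfrak{e}^{-1}(C))=c_1$ and $c_2(\mathfrak{e}^{-1}(C))=[C]$, and since $X$ is homogeneous the class $[C]\in A^2(X)$ is constant along the connected $\calH$ (exactly as in the proof of Proposition \ref{pHilbertLine}) and equals $c_2$.

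The main obstacle is the interplay between the relative and fibrewise local-to-global spectral sequences: one must guarantee that $\Ext^1_\pi$ is a line bundle whose formation commutes with base change and that the universal class restricts to a fibrewise \emph{generator} (equivalently, to a nowhere-vanishing section of $\cO_C$), rather than merely to a nonzero class that might degenerate on a closed subset. This is exactly what the twin vanishings $h^1\big(X,\cO_X(-c_1)\big)=h^2\big(X,\cO_X(-c_1)\big)=0$ are designed to control, by killing both the $R^1\pi_*$ sub-term and the $R^2\pi_*$ obstruction of $\Ext^0$ simultaneously in the relative and absolute settings.
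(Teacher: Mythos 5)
Your proof is correct and follows the same overall strategy as the paper's: relativize the Serre construction by computing the relative $\Ext^1$ of $\cI_{\fC\vert\fX}(c_1)$ through the Grothendieck spectral sequence for $\psi_*\circ\Hom_{\fX}\big(\cdot,\cO_{\fX}(-c_1)\big)$ (your ``local-to-global spectral sequence for relative Ext'' is exactly that spectral sequence), kill $R^p\psi_*\cO_{\fX}(-c_1)$ for $p=1,2$ using the stated hypothesis, identify the sheaf $\Ext^1$ by duality for the codimension-two lci embedding as a line bundle on $\fC$ that is trivial on fibres and hence pulled back from $\calH$, and restrict the resulting global extension fibrewise. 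The one step you execute differently is the production of the extension: the paper twists so that the relative $\Ext^1$ becomes $\cO_{\calH}$ and glues the extensions corresponding to $1\in A=\ext^1_{\psi^{-1}(U)}(\cdots)$ over an affine cover $U=\spec(A)$, whereas you lift the identity section of the trivialized relative $\Ext^1$ through the five-term sequence, at the price of the obstruction group $H^2\big(\calH,\Ext^0_\pi\big)$, which you kill by invoking $h^0\big(X,\cO_X(-c_1)\big)=0$ --- a hypothesis absent from the statement (only $i=1,2$ are assumed) though true in every application in the paper. This is not really a loss of generality relative to the paper: its gluing step tacitly needs the same vanishing, since the isomorphisms between local extensions on overlaps form a torsor under sections of $\Ext^0_\psi\cong H^0\big(X,\cO_X(-c_1)\big)\otimes\calL^{-1}$ and the cocycle obstruction sits in the same $H^2$; the paper simply does not address this. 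Finally, two points where your write-up is more careful than the paper's: the pullback statement for the $\Ext^1$ sheaf (your seesaw argument with $q_*\cO_{\fC}=\cO_{\calH}$, versus the paper's appeal to $\Pic(\fX)\cong\varphi^*\Pic(X)\oplus\psi^*\Pic(\calH)$, which does not literally apply to a line bundle supported on $\fC$ and likewise tacitly requires connected fibres), and the local freeness of the fibres $\mathfrak{e}^{-1}(C)$, which the paper never verifies and which your base-change-plus-Serre-criterion argument (the restricted class being a nowhere-vanishing section of $\cO_C$) actually supplies.
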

\begin{proof}
Let $\fC\subseteq \fX:=X\times\calH$ be the universal scheme, i.e. the flat family having fibre $C$ over the point corresponding to the scheme $C\in\calH$. The embedding $\fC\subseteq \fX$ is fibrewise locally complete intersection, thus it is locally complete intersection. We now construct a flat family $\fE$ of vector bundles over $\calH$ with Chern classes $c_1$ and $c_2$. To this purpose we will relativize the standard Hartshorne--Serre construction described in \cite{Ar}.

First we consider the scheme $\fX$ with the two projections $\varphi$ and $\psi$ onto $X$ and $\calH$ respectively. The morphism $\psi$ is trivially flat, thus $\cO_{\fX}( c_1):=\varphi^*\cO_X( c_1)$ is $\calH$--flat being invertible on $\fX$. It follows the flatness of the sheaf $\cO_{\fC}( c_1):=\cO_{\fC}\otimes\cO_{\fX}( c_1)$.  The exact sequence 
\begin{equation}
\label{seqRelative}
0\longrightarrow\cI_{\fC\vert\fX}\longrightarrow\cO_{\fX}\longrightarrow\cO_{\fC}\longrightarrow0
\end{equation}
yields that $\cI_{\fC\vert\fX}( c_1)$ is flat on $\calH$ too.

Now we consider the two left--exact functors $\psi_*$ and $\Hom_{\fX}\big(\cdot, \cO_{\fX}(- c_1)\big)$. The spectral sequence of the composition of these two functors satisfies
$$
E^{p,q}_2:=R^p\psi_*\left(\Ext^q_{\fX}\big(\cI_{\fC\vert\fX},\cO_{\fX}(- c_1)\big)\right),
$$
and it abuts to
$$
E^n:=R^n\left(\psi_*\Hom_{\fX}\big(\cI_{\fC\vert\fX}, \cO_{\fX}(- c_1)\big)\right).
$$

Recall that the exact sequence of low degree terms is
\begin{equation}
\label{seqLow}
0\longrightarrow E^{1,0}_2\longrightarrow E^{1}\longrightarrow E^{0,1}_2\longrightarrow E^{2,0}_2.
\end{equation}
By applying $\Hom_{\fX}\big(\cdot, \cO_{\fX}(- c_1)\big)$ to Sequence \eqref{seqRelative} we obtain
\begin{align*}
0\longrightarrow&\Hom_{\fX}\big(\cO_{\fC}, \cO_{\fX}(- c_1)\big)\longrightarrow\Hom_{\fX}\big(\cO_{\fX}, \cO_{\fX}(- c_1)\big)\longrightarrow\\
&\longrightarrow\Hom_{\fX}\big(\cI_{\fC\vert\fX}, \cO_{\fX}(- c_1)\big)\longrightarrow\Ext^1_{\fX}\big(\cO_{\fC}, \cO_{\fX}(- c_1)\big)\longrightarrow\\ 
&\longrightarrow\Ext^1_{\fX}\big(\cO_{\fX}, \cO_{\fX}(- c_1)\big)\longrightarrow\Ext^1_{\fX}\big(\cI_{\fC\vert\fX}, \cO_{\fX}(- c_1)\big)\longrightarrow\\
&\longrightarrow\Ext^2_{\fX}\big(\cO_{\fC}, \cO_{\fX}(- c_1)\big)\longrightarrow\Ext^2_{\fX}\big(\cO_{\fX}, \cO_{\fX}(- c_1)\big)\longrightarrow0.
\end{align*}

It is clear that $\Ext^i_{\fX}\big(\cO_{\fX}, \cO_{\fX}(- c_1)\big)=0$, $i\ge1$. Since $\cO_{\fC}$ is a torsion $\cO_{\fX}$--sheaf, it follows that $\Hom_{\fX}\big(\cO_{\fC}, \cO_{\fX}(- c_1)\big)=0$. Finally $\Ext^1_{\fX}\big(\cO_{\fC}, \cO_{\fX}(- c_1)\big)=0$ because the embedding $\fC\subseteq\fX$ is locally complete intersection. 

On the one hand
\begin{align*}
\Hom_{\fX}\big(\cI_{\fC\vert\fX},\cO_{\fX}(- c_1)\big)\cong \Hom_{\fX}\big(\cO_{\fX}, \cO_{\fX}(- c_1)\big)\cong \cO_{\fX}(- c_1).
\end{align*}
Since $\cO_{\fX}(- c_1)$ is flat over $\calH$ and $H^p\big(X,\cO_X(- c_1)\big)=0$, $p=1,2$, the semicontinuity theorem (see \cite{Ha2}, Corollary III.12.9) yields
$$
E^{p,0}_2=R^p\psi_* \cO_{\fX}(- c_1)\cong 0,\qquad p=1,2.
$$

On the other hand 
\begin{align*}
\Ext^1_{\fX}\big(\cI_{\fC\vert\fX}, \cO_{\fX}(- c_1)\big)\cong \Ext^2_{\fX}\big(\cO_{\fC}, \cO_{\fX}(- c_1)\big)\cong\omega_{\fC\vert \calH}\otimes\omega_{\fX\vert \calH}^{-1}\otimes\cO_{\fX}(-c_1)
\end{align*}
Since $\Pic(\fX)\cong\varphi^*\Pic(X)\oplus\psi^*\Pic(\calH)$, it follows that $\omega_{\fC\vert \calH}\otimes\omega_{\fX\vert \calH}^{-1}\otimes\cO_{\fX}(-c_1)\cong\varphi^*\cM\otimes\psi^*\calL$ for suitable $\cM\in\Pic(X)$ and $\calL\in\Pic(\calH)$. We know by adjunction formula that the restriction of $\omega_{\fC\vert \calH}$ to each fibre is $\omega_{C}\cong\omega_X\otimes\cO_C(c_1)$. We conclude that 
$$
\omega_{\fC\vert \calH}\otimes\omega_{\fX\vert \calH}^{-1}\otimes\cO_{\fX}(-c_1)\cong \psi^*\calL,
$$
whence $E^{0,1}_2\cong\psi_*\psi^*\calL\cong \calL$.

Substituting in Sequence \eqref{seqLow} we finally obtain an isomorphism
$$
\calL\cong R^1\left(\psi_*\Hom_{\fX}\big(\cI_{\fC\vert\fX}, \cO_{\fX}(- c_1)\big)\right),
$$
whence
\begin{equation}
\label{IsoGlobal}
\cO\cong R^1\left(\psi_*\Hom_{\fX}\big(\cI_{\fC\vert\fX}, \cO_{\fX}(- c_1)\otimes\psi^*{\calL}^{-1}\big)\right).
\end{equation}
Now take a sufficiently fine open cover of $\calH$ with affine open subsets $U:=\spec(A)\subseteq\calH$. We have an identification
$$
A=\ext^1_{\psi^{-1}(U)}\big(\cI_{\fC\cap\psi^{-1}(U)\vert\psi^{-1}(U)}, \cO_{\psi^{-1}(U)}(- c_1)\big).
$$
Taking the image of $1\in A$ we obtain locally on $U$ an extension of $\cI_{\fC\vert\fX}( c_1)\otimes\psi^*\calL$ with $\cO_{\fX}$.  The global isomorphism \eqref{IsoGlobal} allows us to glue together such sequences. Hence we have a global exact sequence
\begin{equation}
\label{seqGlobal}
0\longrightarrow\cO_{\fX}\longrightarrow\fE\longrightarrow\cI_{\fC\vert\fX}( c_1)\otimes\psi^*\calL\longrightarrow0.
\end{equation}
Since $\cI_{\fC\vert\fX}( c_1)$ is $\cO_{\calH}$--flat, it follows that the same is true for $\cI_{\fC\vert\fX}( c_1)\otimes\psi^*\calL$. Moreover $\cO_{\fX}$ is also flat. We conclude that the family ${\frak e}\colon\fE\to\calH$ is flat too.

Let $C\in\calH$. Recall that $C$ is locally complete intersection in $\fC$. Tensoring Sequence \eqref{seqGlobal} to $\psi^{-1}(C)$, we obtain the exact sequence \eqref{seqIdeal} with $\cE:={\frak e}^{-1}(C)$. Thus $C$ is the zero--locus of a section of $\cE$. Trivially $c_1(\cE)=c_1$.

The second Chern class $c_2(\cE)$ is the class of the zero locus  $C$ of $\cE$ in $A^2(X)$. Trivially $C$ is algebraically equivalent to $C_0$, thus they are also homologically equivalent (see \cite{Fu}, Proposition 19.1.1). Since $X$ is homogeneous, it follows that homologically equivalent cycles are rationaly equivalent (see \cite{Fu}, Example 19.1.11). Thus the class of each fibre $C$ of $\fC$ inside $A^2(X)$ is constantly $c_2$. 
\end{proof}

In the next sections we will apply the above result for constructing suitable families of bundles on $X=F$ where $\calH$ is one of the components of the schemes $\Hilb_{t+1}(F)$, $\Hilb_{7t+1}^{sm,nd}(F)$, $\Hilb_{8t}^{sm,nd}(F)$ described in the previous section.

\section{Moduli spaces of Ulrich bundles}
\label{sModuliUlrich}
We are interested in initialized rank $2$ aCM bundles on $F$ with $c_1$ either $2h$ or $h_1+2h_2+3h_3$: the representative $c_2$ are uniquely determined (see the Proposition \ref{pList}). 

Let $\cE$ be a vector bundle. Then $\End(\cE)\cong H^0\big(F,\cE\otimes{\cE}^\vee\big)$ has dimension at least $1$ and we call $\cE$ simple if such a minimum is attained. If $\cE$ is stable, then it is also simple (see \cite{H--L}, Corollary 1.2.8). A similar property holds when $\cE$ is an Ulrich bundle on $F$, even without the stability property.

\begin{proposition}
\label{pSimple}
If $\cE$ is an indecomposable Ulrich bundle of rank $2$ on $F$, then it is simple.
\end{proposition}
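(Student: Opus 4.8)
The plan is to exploit the fact that an Ulrich bundle is semistable (Theorem~2.9 of \cite{C--H2}), so that $\cE$ falls into exactly one of two regimes: either $\cE$ is (Gieseker, equivalently $\mu$-) stable, or it is strictly semistable. In the stable case there is nothing to do: a stable sheaf is automatically simple by \cite{H--L}, Corollary 1.2.8, which is precisely the implication recalled just before the statement. So the whole content is concentrated in the strictly semistable case, and here I would feed in the preceding Proposition~\ref{pStable}, which pins down the destabilizing subsheaf completely.

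Concretely, if $\cE$ is strictly semistable then, up to permuting the $h_i$'s, Proposition~\ref{pStable} provides an exact sequence
\begin{equation*}
0\longrightarrow\calL\longrightarrow\cE\longrightarrow\cM\longrightarrow0,
\end{equation*}
with $\cM:=\cO_F(2h_2+h_3)$ and $\calL$ equal to $\cO_F(2h_1+h_3)$ or $\cO_F(h_1+2h_3)$. Since $\cE$ is indecomposable this extension cannot split, so its class $e\in\ext^1_F(\cM,\calL)\cong H^1\big(F,\calL\otimes\cM^{-1}\big)$ is nonzero. The strategy is then to compute $\End(\cE)=\Hom_F(\cE,\cE)$ by applying $\Hom_F(-,\cE)$ to this sequence, which yields the left-exact piece
\begin{equation*}
0\longrightarrow\Hom_F(\cM,\cE)\longrightarrow\End(\cE)\longrightarrow\Hom_F(\calL,\cE).
\end{equation*}
I would prove that the two outer terms are, respectively, $0$ and one-dimensional; since $\mathrm{id}_\cE$ is a nonzero element of $\End(\cE)$, this forces $\dim_k\End(\cE)=1$, i.e. simplicity.

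For the left term, applying $\Hom_F(\cM,-)$ to the sequence gives an injection of $\Hom_F(\cM,\cE)$ into the cokernel-controlling data: $\Hom_F(\cM,\calL)=H^0\big(F,\calL\otimes\cM^{-1}\big)$ vanishes (a direct K\"unneth check on $(\p1)^3$, as $\calL\otimes\cM^{-1}$ has a strictly negative factor in each of the two permitted choices of $\calL$), while the map $\Hom_F(\cM,\cM)=k\cdot\mathrm{id}_\cM\to\ext^1_F(\cM,\calL)$ is cup product with $e$, sending $\mathrm{id}_\cM\mapsto e\neq0$ and hence injective; therefore $\Hom_F(\cM,\cE)=0$. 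For the right term, tensoring the sequence by $\calL^{-1}$ gives $0\to\cO_F\to\cE\otimes\calL^{-1}\to\cM\otimes\calL^{-1}\to0$, and since $H^0\big(F,\cM\otimes\calL^{-1}\big)=0$ and $H^0(F,\cO_F)=k$, $H^1(F,\cO_F)=0$, one gets $\Hom_F(\calL,\cE)=H^0\big(F,\cE\otimes\calL^{-1}\big)=k$. The same computation covers the case $c_1=h_1+2h_2+3h_3$, whose sequence has the same pair $(\calL,\cM)$. I expect the only delicate point to be the identification of the connecting map with cup product by the extension class $e$ and the observation that the indecomposability of $\cE$ is exactly what guarantees $e\neq0$; the cohomological vanishings themselves are routine K\"unneth calculations on $F=\p1\times\p1\times\p1$.
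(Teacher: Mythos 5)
Your proof is correct and follows essentially the same route as the paper's: reduce to the strictly semistable case via stability $\Rightarrow$ simplicity, invoke Proposition~\ref{pStable} to get the extension \eqref{seqUnstable}, and then show the two outer Hom groups have dimensions $0$ (using indecomposability, i.e.\ non-splitting) and $1$. The only cosmetic difference is that the paper phrases the same computation by tensoring \eqref{seqUnstable} with ${\cE}^\vee$ and using the rank-two identity ${\cE}^\vee\cong\cE(-c_1)$, whereas you apply $\mathrm{Hom}_F(-,\cE)$ and identify the connecting map as cup product with the extension class.
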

\begin{proof}
Due to the discussion above we can restrict our attention to strictly $\mu$--semistable Ulrich bundles $\cE$. We know that the bundle  $\cE$ fits into Sequence \eqref{seqUnstable} where $\calL\cong\cO_F(c_1-2h_2-h_3)$.

Taking the cohomology of Sequence \eqref{seqUnstable} tensorized by ${\cE}^\vee$ we obtain 
\begin{equation}
\label{Bound}
1\le h^0\big(F,\cE\otimes{\cE}^\vee\big)\le h^0\big(F,{\cE}^\vee\otimes\calL\big)+h^0\big(F,{\cE}^\vee(2h_2+h_3)\big).
\end{equation}
By applying the functor $\mathrm{Hom}_F\big(\cO_F(2h_2+h_3),\cdot\big)$ to Sequence \eqref{seqUnstable} and taking into account that $\cE$ is indecomposable, we deduce that 
$$
h^0\big(F,{\cE}^\vee\otimes\calL\big)=h^0\big(F,\cE(-2h_2-h_3)\big)=h^0\big(F,\calL(-2h_2-h_3)\big)=0. 
$$
Taking the cohomology of  Sequence \eqref{seqUnstable}  twisted by $\calL^{-1}\cong \cO_F(2h_2+h_3-c_1)$ we obtain 
$$
h^0\big(F,{\cE}^\vee(2h_2+h_3)\big)=h^0\big(F,\cE(-h_1-2h_3)\big)=h^0\big(F,\cO_F)=1.
$$
Thus Inequalities \eqref{Bound} yields $\dim_k\left(\End(\cE)\right)=h^0\big(F,\cE\otimes{\cE}^\vee\big)=1$.
\end{proof}

We now deal with the irreducibility and the dimension of the moduli spaces constructed above. We start with the case $c_1=h_1+2h_2+3h_3$: recall that in this case $c_2$ is either $4h_2h_3+h_1h_3+2h_1h_2$ or $3h_2h_3+3h_1h_3+h_1h_2$. The first result is the following lemma reverting Proposition \ref{pStable} in this particular case.

\begin{lemma} 
\label{lExtension}
Let $\cE$ be an initialized rank $2$ aCM bundle on $F$ with $c_1=h_1+2h_2+3h_3$ and $c_2=4h_2h_3+h_1h_3+2h_1h_2$. Then $\cE$ fits into an exact sequence of the form
$$
0\longrightarrow\cO_F(h_1+2h_3)\longrightarrow \cE\longrightarrow\cO_F (2h_2+h_3)\longrightarrow0.
$$
In particular there exists a family with base $\p3$ parameterizing such bundles.
\end{lemma}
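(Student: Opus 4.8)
The plan is to show that any such $\cE$ is forced to be strictly semistable, so that the desired sequence is produced by Proposition \ref{pStable}. By Proposition \ref{pList} the bundle $\cE$ is Ulrich, hence semistable; thus it suffices to prove that $\cE$ is \emph{not} stable. Since $\mu(\cE)=6$ and $\mu\big(\cO_F(h_1+2h_3)\big)=(h_1+2h_3)h^2=6$, a nonzero morphism $\cO_F(h_1+2h_3)\to\cE$ would have as image a proper subsheaf of slope equal to $\mu(\cE)$, which is incompatible with stability. Hence the heart of the proof is the nonvanishing
$$
h^0\big(F,\cE(-h_1-2h_3)\big)\neq0.
$$

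To establish it I would first record the Chern classes of $\cE':=\cE(-h_1-2h_3)$: one finds $c_1(\cE')=-h_1+2h_2-h_3$ and, after the cancellations forced by $c_2(\cE)=4h_2h_3+h_1h_3+2h_1h_2$ in the twisting formula $c_2(\cE\otimes L)=c_2(\cE)+c_1(\cE)c_1(L)+c_1(L)^2$, the vanishing $c_2(\cE')=0$. Hirzebruch--Riemann--Roch on $F$, using $T_F\cong\bigoplus_{i=1}^3\cO_F(2h_i)$ and hence $\mathrm{td}(T_F)=\prod_{i=1}^3(1+h_i)$, then gives $\chi(\cE')=1$. It therefore remains to prove $h^2(\cE')=h^3(\cE')=0$, for then $h^0(\cE')=1+h^1(\cE')\ge1$. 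Serre duality reduces $h^3(\cE')$ to $h^0\big(F,\cE(-2h_1-4h_2-3h_3)\big)$, which vanishes because $\cE$ is initialized and $2h_1+4h_2+3h_3-h$ is effective, so that $\cE(-2h_1-4h_2-3h_3)$ embeds into $\cE(-h)$.

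The vanishing $h^2(\cE')=0$ is the main obstacle, and here I would feed in the geometry of the associated curve. Twisting Sequence \eqref{seqIdeal} for a general section of $\cE$, whose zero locus $C$ is by Proposition \ref{pList} a smooth rational curve of degree $7$ with class $c_2$, by $\cO_F(-h_1-2h_3)$ gives
$$
0\longrightarrow\cO_F(-h_1-2h_3)\longrightarrow\cE'\longrightarrow\cI_{C\vert F}(2h_2+h_3)\longrightarrow0.
$$
Künneth shows $H^\bullet\big(F,\cO_F(-h_1-2h_3)\big)=0$, since the factor $\cO_{\p1}(-1)$ is acyclic, so $h^2(\cE')\le h^2\big(F,\cI_{C\vert F}(2h_2+h_3)\big)$. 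Twisting the ideal sequence \eqref{seqStandard} of $C$ and using $H^2\big(F,\cO_F(2h_2+h_3)\big)=0$ (again Künneth) together with $H^1\big(C,\cO_C(2h_2+h_3)\big)=H^1\big(\p1,\cO_{\p1}(4)\big)=0$ (the degree of $2h_2+h_3$ on $C$ being $(2h_2+h_3)c_2=4$) then forces $h^2(\cE')=0$.

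Once a nonzero map $\cO_F(h_1+2h_3)\to\cE$ is in hand, $\cE$ is strictly semistable and the case $c_1=h_1+2h_2+3h_3$ of Proposition \ref{pStable} yields the stated sequence; directly, semistability shows the saturation of the image has slope $\le6$, hence equals the slope--$6$ line bundle $\cO_F(h_1+2h_3)$ itself, so the torsion--free quotient $\cI_{Z\vert F}(2h_2+h_3)$ has $c_2=c_2(\cE)-(h_1+2h_3)(2h_2+h_3)=0$, whence $Z=\emptyset$ and the quotient is $\cO_F(2h_2+h_3)$. For the last assertion, extensions are classified by $\ext^1\big(\cO_F(2h_2+h_3),\cO_F(h_1+2h_3)\big)=H^1\big(F,\cO_F(h_1-2h_2+h_3)\big)$, which by Künneth is $4$--dimensional, while $\mathrm{Hom}\big(\cO_F(2h_2+h_3),\cO_F(h_1+2h_3)\big)=H^0\big(F,\cO_F(h_1-2h_2+h_3)\big)=0$; since an extension of line bundles is automatically locally free, the universal extension over $\bP\big(\ext^1\big)\cong\p3$ furnishes the required family.
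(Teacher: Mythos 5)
Your argument is correct, and its first half coincides with the paper's proof: both hinge on the nonvanishing $h^0\big(F,\cE(-h_1-2h_3)\big)\ne0$, obtained by combining $\chi\big(\cE(-h_1-2h_3)\big)=1$ with the vanishing $h^2\big(F,\cE(-h_1-2h_3)\big)=h^2\big(F,\cI_{C\vert F}(2h_2+h_3)\big)=h^1\big(\p1,\cO_{\p1}(4)\big)=0$, computed exactly as you do from Sequences \eqref{seqIdeal} and \eqref{seqStandard} for the degree--$7$ rational curve $C$ of Proposition \ref{pList}; your extra vanishing $h^3=0$ is harmless but not needed, since $h^0=\chi+h^1-h^2+h^3$, so $h^2=0$ alone suffices. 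Where you genuinely differ is the endgame. The paper converts the resulting section $\sigma$ of $\cE^\vee(2h_2+h_3)$ into the extension by hand: it decomposes $(\sigma)_0=D\cup E$ into a divisorial part and a residual curve, rules out each coefficient of $D$ through a case analysis using global generation, initializedness and the aCM property, and only then kills $E$ via $c_2=0$. You instead use that $\cE$ is Ulrich, hence semistable and so $\mu$--semistable: the saturation of the image of $\cO_F(h_1+2h_3)\to\cE$ is invertible of the form $\cO_F(h_1+2h_3+D)$ with $D$ effective, and the slope bound $\le 6$ forces $D=0$, after which the same $c_2$ computation eliminates the residual subscheme $Z$. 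This is shorter and more conceptual, replacing the paper's ad hoc analysis of $D$ by a single slope inequality, at the price of importing the semistability of Ulrich bundles (\cite{C--H2}, Theorem 2.9), which the paper's self--contained argument avoids. One small caveat: a subsheaf of slope $\mu(\cE)$ directly contradicts $\mu$--stability, not Gieseker stability; for Ulrich bundles the two notions agree (\cite{C--H2}, Theorem 2.9 (c)), and in any case your direct saturation argument uses only $\mu$--semistability, so no gap results. Your final count $\dim\ext^1_F\big(\cO_F(2h_2+h_3),\cO_F(h_1+2h_3)\big)=h^1\big(F,\cO_F(h_1-2h_2+h_3)\big)=4$, giving the family over $\p3$, is the same as the paper's.
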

\begin{proof}
Let $\cE$ be as in the statement: Riemann--Roch theorem yields $\chi(\cE(-h_1-2h_3))=1$. 
If $C\subseteq F$ is the zero locus of a general section of $\cE$, then $C$ is a rational normal curve of degree $7$.
Taking the cohomology of  Sequences \eqref{seqIdeal} and \eqref{seqStandard} respectively twisted by $\cO_F(-h_1-2h_3)$ and $\cO_F(2h_2+h_3)$ we obtain
\begin{align*}
h^2\big(F,\cE(-h_1-2h_3)\big)&=h^2\big(F,\cI_{C\vert F}(2h_2+h_3)\big)=\\
&=h^1\big(F,\cO_C(2h_2+h_3)\big)=h^1\big(\p1,\cO_{\p1}(4)\big)=0.
\end{align*}
Hence $h^0\big(F,\cE(-h_1-2h_3)\big)=h^0\big(F,{\cE}^\vee(2h_2+h_3)\big)\ne0$. 

Let $\sigma\in H^0\big(F,\cE^\vee(2h_2+h_3)\big)$ and set $(\sigma)_0=D\cup E$, where $D\in \vert a_1h_1+a_2h_2+a_3h_3\vert$ is an effective divisor (i.e. $a_i\ge0$, $i=1,2,3$) and $E$ is either empty or has pure dimension $1$. Thus $E=(s)_0$ where $s\in H^0\big(F,\cE^\vee(2h_2+h_3-D)\big)$. Twisting by $\cO_F(D)$ the Koszul complex of $s$ we obtain
\begin{equation}
\label{Extension} 
0\longrightarrow \cO_F(D)\longrightarrow {\cE}^\vee(2h_2+h_3)\longrightarrow \cI_{E\vert F}(-h_1+2h_2-h_3-D)\longrightarrow 0.
\end{equation}
Twisting Sequence \eqref{Extension} by $\cO_F(h_1+2h_3)$ we obtain
$$
0\longrightarrow \cO_F(D+h_1+2h_3)\longrightarrow \cE\longrightarrow \cI_{E\vert F}(2h_2+h_3-D)\longrightarrow 0.
$$
We know that $h^0\big(F,\cO_F(D-h_2+h_3)\big)\le h^0\big(F,\cE(-h)\big)=0$, thus $a_2=0$. We also know that $\cE$ is also globally generated, thus the same is true for $\cI_{E\vert F}(2h_2+h_3-D)$. In particular $0<h^0\big(F,\cI_{E\vert F}(2h_2+h_3-D)\big)\le h^0\big(F,\cO_F(2h_2+h_3-D)\big)$, whence we infer $a_1=0$ and $a_3\le1$. Taking the cohomology of Sequence \eqref{Extension} twisted by $\cO_F(-2h_2-h_3)$ we obtain 
$$
0\longrightarrow \cO_F(D-2h_2-h_3)\longrightarrow {\cE}^\vee\longrightarrow \cI_{E\vert F}(-h_1-2h_3-D)\longrightarrow 0.
$$
Since $D\in\vert a_3 h_3\vert$, it follows that $h^0\big(F,\cI_{E\vert F}(-h_1-2h_3-D)\big)\le h^0\big(F,\cO_F(-h_1-2h_3-D)\big)=0$. Moreover ${\cE}^\vee$ is aCM. Taking the cohomology of the above sequence we thus obtain $h^1\big(F,\cO_F(D-2h_2-h_3)\big)=0$, whence $a_3=0$. 

We conclude that $D=0$ in Sequence \eqref{Extension}. Since $c_2({\cE}^\vee(2h_2+h_3))=0$ we deduce that $E=\emptyset$. In particular $\cI_{E\vert F}\cong\cO_F$ in Sequence \eqref{Extension}. Twisting such a sequence by $\cO_F(h_1+2h_3)$ we finally prove the existence of the extension we were asking for. Such extension are parameterized by $\p3$, because $\dim\big(\ext^1_F\big(\cO_F (2h_2+h_3),\cO_F(h_1+2h_3)\big)\big)=4$.
\end{proof}

Lemma \ref{lExtension} yields that a Jordan--H\"older filtration of $\cE$ is $0\subset\cO_F(h_1+2h_3)\subset \cE$: indeed $\cO_F(h_1+2h_3)$ and $\cO_F (2h_2+h_3)$, being invertible, are stable with reduced Hilbert polynomial $p(t)=6{t+3\choose3}$. It follows that
$$
\gr(\cE)=\cO_F(h_1+2h_3)\oplus \cE/\cO_F(h_1+2h_3)\cong \cO_F(h_1+2h_3)\oplus\cO_F (2h_2+h_3).
$$
Thus we have just proved the following result.

\begin{proposition}
\label{pPoint}
The moduli space
$$
\cM_F^{ss, U}(2;h_1+2h_2+3h_3,4h_2h_3+h_1h_3+2h_1h_2)
$$
reduces to a single point.
\end{proposition}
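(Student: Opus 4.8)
The plan is to read the statement off directly from Lemma~\ref{lExtension} together with the description of $\cM_F^{ss,U}$ as a parameter space for $S$--equivalence classes of semistable Ulrich bundles. Recall that two semistable sheaves with the same Hilbert polynomial represent the same point of the moduli space exactly when they are $S$--equivalent, i.e. when the graded objects associated to their Jordan--H\"older filtrations are isomorphic (see \cite{H--L}, Section 1.5). So the task reduces to checking that every bundle carried by this moduli space has one and the same associated graded object.

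First I would apply Lemma~\ref{lExtension}: any initialized rank~$2$ aCM (hence Ulrich) bundle $\cE$ with $c_1(\cE)=h_1+2h_2+3h_3$ and $c_2(\cE)=4h_2h_3+h_1h_3+2h_1h_2$ fits into an extension
$$
0\longrightarrow\cO_F(h_1+2h_3)\longrightarrow\cE\longrightarrow\cO_F(2h_2+h_3)\longrightarrow0,
$$
so by Proposition~\ref{pStable} every such $\cE$ is strictly semistable and the moduli space contains no stable bundle. Both invertible sheaves appearing here have slope $6=\mu(\cE)$ and reduced Hilbert polynomial $6{t+3\choose3}$, and being invertible they are stable. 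Hence the displayed sequence exhibits $0\subset\cO_F(h_1+2h_3)\subset\cE$ as a Jordan--H\"older filtration of $\cE$.

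It follows that the associated graded object
$$
\gr(\cE)\cong\cO_F(h_1+2h_3)\oplus\cO_F(2h_2+h_3)
$$
is independent of the chosen $\cE$, so all bundles with the prescribed Chern classes are mutually $S$--equivalent and determine one and the same point of the moduli space. There is no genuine obstacle in this argument: everything substantive is already contained in Lemma~\ref{lExtension}, and the only thing to verify is that the two Jordan--H\"older factors are stable with the common reduced Hilbert polynomial $6{t+3\choose3}$, which is immediate from the slope and Hilbert polynomial computation. This forces the moduli space to reduce to a single point.
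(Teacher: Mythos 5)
Your proposal is correct and is essentially the paper's own proof: the paper likewise deduces from Lemma~\ref{lExtension} that $0\subset\cO_F(h_1+2h_3)\subset\cE$ is a Jordan--H\"older filtration (both factors being invertible, hence stable, with reduced Hilbert polynomial $6{t+3\choose3}$), so that $\gr(\cE)\cong\cO_F(h_1+2h_3)\oplus\cO_F(2h_2+h_3)$ is the same for every such bundle, and all of them lie in a single $S$--equivalence class. The only cosmetic difference is your explicit appeal to Proposition~\ref{pStable} for strict semistability, which the paper leaves implicit.
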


In all the other cases we claim that there always exist stable Ulrich bundles. Thanks to Proposition \ref{pStable} this is obvious when $c_2$ is either $2h_2h_3+3h_1h_3+3h_1h_2$ or $3h_2h_3+3h_1h_3+h_1h_2$. In particular we can prove the following results.

\begin{proposition}
\label{pModuliRationalElliptic}
The moduli spaces
\begin{gather*}
\cM_F^{ss,U}(2;h_1+2h_2+3h_3,3h_2h_3+3h_1h_3+h_1h_2),\\
\cM_F^{ss,U}(2;2h,2h_2h_3+3h_1h_3+3h_1h_2),
\end{gather*}
are irreducible, smooth of respective dimensions $3$ and $5$. 

They coincide with the loci of stable bundles.
\end{proposition}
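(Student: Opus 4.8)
The plan is to split the statement into its four assertions---coincidence with the stable locus, smoothness, irreducibility, and the dimension count---and to obtain them by combining Proposition \ref{pStable}, Lemma \ref{lObstructed}, and the relative Hartshorne--Serre family of Theorem \ref{tFamily}. First I would dispose of the coincidence with the stable locus. By Proposition \ref{pStable} a strictly semistable Ulrich bundle of rank $2$ has, up to permutation of the $h_i$, second Chern class $2h_2h_3+2h_1h_3+4h_1h_2$ (when $c_1=2h$) or $4h_2h_3+h_1h_3+2h_1h_2$ (when $c_1=h_1+2h_2+3h_3$). Comparing the coefficient multisets, $(2,2,4)$ is distinct from the $(2,3,3)$ of $2h_2h_3+3h_1h_3+3h_1h_2$; and since $c_1=h_1+2h_2+3h_3$ has pairwise distinct coefficients, the only permutation fixing it is the identity, so $3h_2h_3+3h_1h_3+h_1h_2$ is not $4h_2h_3+h_1h_3+2h_1h_2$. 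Hence no strictly semistable bundle has the prescribed invariants, whence $\cM_F^{ss,U}=\cM_F^{s,U}$ in both cases. Smoothness is then immediate: Lemma \ref{lObstructed} guarantees that every stable Ulrich bundle is an unobstructed point of the moduli space.

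Next I would prove irreducibility (and, as a bonus matching Theorem B, unirationality). One first checks that the hypotheses of Theorem \ref{tFamily} hold for $X=F$ with these Chern classes: the vanishing $h^1\big(F,\cO_F(-c_1)\big)=h^2\big(F,\cO_F(-c_1)\big)=0$ follows from the K\"unneth formula (for $c_1=h_1+2h_2+3h_3$ all cohomology of $\cO_F(-c_1)$ vanishes, and for $c_1=2h$ only $h^3$ survives), while by Proposition \ref{pList} the general section cuts out a smooth rational normal (resp. elliptic normal) curve $C$ with $\omega_C\cong\omega_F\otimes\cO_C(c_1)$. Applying Theorem \ref{tFamily} over the relevant component $\calH_{c_2}$ of $\Hilb_{7t+1}^{sm,nd}(F)$ (resp. $\Hilb_{8t}^{sm,nd}(F)$) produces a flat family of stable Ulrich bundles with constant Chern classes $c_1,c_2$, hence a classifying morphism $\calH_{c_2}\to\cM_F^{ss,U}(2;c_1,c_2)$. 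This morphism is surjective: by Proposition \ref{pList} every such bundle is the Hartshorne--Serre sheaf of the zero locus of a general section, a curve lying in $\calH_{c_2}$, and the reconstruction gives back $\cE$ because $\ext^1_F\big(\cI_{C\vert F}(c_1),\cO_F\big)$ is one--dimensional. Since $\calH_{c_2}$ is irreducible and unirational by Propositions \ref{pRational} and \ref{pElliptic}, its image $\cM_F^{ss,U}(2;c_1,c_2)$ is irreducible and unirational as well.

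Finally I would compute the dimension by deformation theory at a point $[\cE]$, which is stable, hence simple by Proposition \ref{pSimple}. There $\dim_{[\cE]}\cM=\ext^1_F(\cE,\cE)=h^1\big(F,\cE\otimes\cE^\vee\big)$, and combining $h^0\big(F,\cE\otimes\cE^\vee\big)=1$ with the vanishings $h^2=h^3=0$ supplied by Lemma \ref{lObstructed} at $t=0$ gives $h^1=1-\chi\big(F,\cE\otimes\cE^\vee\big)$. A Hirzebruch--Riemann--Roch computation on $F=\p1\times\p1\times\p1$, using $\mathrm{ch}_2(\cE\otimes\cE^\vee)=c_1^2-4c_2$ (the higher graded pieces of $\mathrm{ch}$ of the endomorphism bundle vanishing) together with $\mathrm{td}_1(F)=h$ and $\int_F h_1h_2h_3=1$, reduces $\chi$ to $4+\int_F(c_1^2-4c_2)h$; evaluating $c_1^2-4c_2$ gives $-6h_1h_3$ (case $c_1=h_1+2h_2+3h_3$) and $-4h_1h_2-4h_1h_3$ (case $c_1=2h$), so $\chi=-2$ and $\chi=-4$, yielding the dimensions $3$ and $5$. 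Equivalently, the generic fibre of $\calH_{c_2}\to\cM$ is $\bP\big(H^0(F,\cE)\big)\cong\p{11}$ since $h^0(F,\cE)=12$, so $\dim\cM=14-11=3$ and $16-11=5$, a reassuring cross--check.

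The step I expect to be the main obstacle is the surjectivity of the classifying map, that is, verifying that \emph{every} stable Ulrich bundle with the prescribed invariants really is the Hartshorne--Serre sheaf of a curve in the single component $\calH_{c_2}$, with no bundle escaping into a different permutation class and none failing to be reconstructed. This is exactly the point at which Proposition \ref{pList} is indispensable, since it furnishes both directions of the correspondence (each such curve arises from a section, and each such bundle has a general section vanishing along such a curve); the smoothness and dimension assertions are then routine consequences of Lemma \ref{lObstructed}, Proposition \ref{pSimple}, and Riemann--Roch.
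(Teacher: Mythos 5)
Your proposal is correct and follows essentially the same route as the paper: apply Theorem \ref{tFamily} over the irreducible, unirational component $\calH_{c_2}$ of the Hilbert scheme (Propositions \ref{pRational} and \ref{pElliptic}), use Proposition \ref{pList} for surjectivity of the classifying morphism onto $\cM_F^{ss,U}(2;c_1,c_2)$, rule out strictly semistable bundles via the $c_2$ values forced by Proposition \ref{pStable}, get smoothness from Lemma \ref{lObstructed}, and compute the dimension as $h^1\big(F,\cE\otimes\cE^\vee\big)=1-\chi(\cE\otimes\cE^\vee)$ by Riemann--Roch together with simplicity (Proposition \ref{pSimple}). Your numerical evaluations ($\chi=-2$ and $\chi=-4$) agree with the paper's, and your extra verifications (K\"unneth vanishing, one--dimensionality of $\ext^1_F\big(\cI_{C\vert F}(c_1),\cO_F\big)$, the fibre-dimension cross-check) are refinements of the same argument rather than a different method.
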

\begin{proof}
The existence of such moduli spaces has been already stated in Proposition \ref{pModUlrich}. We want to prove their irreducibility. We know that the locus $\calH\subseteq\Hilb_{7t+1}^{sm,nd}(F)$ corresponding to smooth, connected, non--degenerate curves with class $c_2=3h_2h_3+3h_1h_3+h_1h_2$ is irreducible, smooth and unirational (see Proposition \ref{pRational}). Moreover $h^i\big(F,\cO_F(-c_1)\big)=0$, $i=1,2$, where $c_1=h_1+2h_2+3h_3$.

It follows the existence of a flat family $\fE\to\calH$ of vector bundles of rank $2$ with Chern classes $c_1$ and $c_2$ (see Theorem \ref{tFamily}). If the bundle $\cE$ is a fibre of such a family, then it fits into the  exact sequence \eqref{seqIdeal} with $c_1=h_1+2h_2+3h_3$ and $C\subseteq F$ a rational normal curve. Trivially $\cE$ is initialized. Thanks to \cite{C--F--M}, Section 7, we know that $\cE$ is also aCM.

Thus the universal property of $\cM_F^{ss}(2;h_1+2h_2+3h_3,3h_2h_3+3h_1h_3+h_1h_2)$ yields the existence of a well--defined morphism
$$
m\colon\calH\to \cM_F^{ss,U}(2;h_1+2h_2+3h_3,3h_2h_3+3h_1h_3+h_1h_2).
$$
The morphism $m$ is surjective thanks to Proposition \ref{pList} (4): indeed each initialized, aCM bundle with $c_1=h_1+2h_2+3h_3$ and $c_2=3h_2h_3+3h_1h_3+h_1h_2$ has a rational normal curve as zero--locus of its general section, thus it appears as a fibre of the family defined in Theorem \ref{tFamily}. We conclude that $\cM_F^{ss,U}(2;h_1+2h_2+3h_3,3h_2h_3+3h_1h_3+h_1h_2)$ is irreducible. 

Each point in $\cM_F^{ss,U}(2;h_1+2h_2+3h_3,3h_2h_3+3h_1h_3+h_1h_2)$ represents a stable bundle. Corollary 4.5.2 of \cite{H--L} yields that it is a smooth point, thanks to the vanishing $h^2\big(F,{\cE}^\vee\otimes\cE\big)=0$ proved in Lemma \ref{lObstructed}. Moreover we have
$$
\dim\big(\cM_F^{ss,U}(2;h_1+2h_2+3h_3,3h_2h_3+3h_1h_3+h_1h_2)\big)=h^1\big(F,{\cE}^\vee\otimes\cE\big).
$$
Thanks to Proposition \ref{pSimple} and Lemma \ref{lObstructed} we have $h^1\big(F,{\cE}^\vee\otimes\cE\big)=1-\chi({\cE}^\vee\otimes\cE)$. It is easy to check that $c_1({\cE}^\vee\otimes\cE)=0$, $c_2({\cE}^\vee\otimes\cE)=4c_2-c_1^2$, $c_3({\cE}^\vee\otimes\cE)=0$: thus Riemann--Roch theorem for ${\cE}^\vee\otimes\cE$ yields $\chi({\cE}^\vee\otimes\cE)=4-4hc_2+hc_1^2$, hence $h^1\big(F,{\cE}^\vee\otimes\cE\big)=3$.

The second case can be handled similarly.
\end{proof}

What can be said in the case $c_2=2h_2h_3+2h_1h_3+4h_1h_2$? Take a strictly semistable Ulrich bundle $\cE$. It has a Jordan--H\"older filtration $0\subset\calL\subset \cE$ where $\calL$ is either $\cO_F(2h_1+h_3)$ or $\cO_F(2h_2+h_3)$ (see the proof of Proposition \ref{pStable}), hence $\cE/\calL$ is, respectively, either $\cO_F(2h_2+h_3)$ or $\cO_F(2h_1+h_3)$. In particular 
$$
\gr(\cE)\cong\cO_F(2h_1+h_3)\oplus\cO_F(2h_2+h_3)
$$
Hence there is only one point in $\cM_F^{ss,U}(2;2h,2h_2h_3+2h_1h_3+4h_1h_2)$ representing the $S$--equivalence class of all the strictly semistable bundles.

\begin{proposition}
The moduli space
\begin{gather*}
\cM_F^{ss,U}(2;2h,2h_2h_3+2h_1h_3+4h_1h_2),
\end{gather*}
is irreducible of dimension $5$. 

The locus $\cM_F^{s,U}(2;2h,2h_2h_3+2h_1h_3+4h_1h_2)$ is irreducible, smooth and its complement consists in exactly one point.
\end{proposition}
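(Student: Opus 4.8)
The plan is to adapt the strategy of Proposition \ref{pModuliRationalElliptic} to the case $c_2=2h_2h_3+2h_1h_3+4h_1h_2$, exploiting the Hilbert scheme component produced in Proposition \ref{pElliptic} together with the universal family of Theorem \ref{tFamily}. First I would fix $c_1=2h$ and the relevant class $c_2$, and select the irreducible component $\calH\subseteq\Hilb_{8t}^{sm,nd}(F)$ of smooth, connected, non--degenerate elliptic octics whose class in $A^2(F)$ is $c_2$; by Proposition \ref{pElliptic} this $\calH$ is smooth, unirational of dimension $16$. Since $h^i\big(F,\cO_F(-2h)\big)=0$ for $i=1,2$, Theorem \ref{tFamily} supplies a flat family $\fE\to\calH$ of rank $2$ bundles with the prescribed Chern classes, each fibre being an Ulrich bundle associated to an elliptic normal curve (using Proposition \ref{pList} (4) and the fact that such bundles are Ulrich). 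The universal property of the moduli space then yields a morphism
$$
m\colon\calH\to \cM_F^{ss,U}(2;2h,2h_2h_3+2h_1h_3+4h_1h_2).
$$

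Next I would argue surjectivity and hence irreducibility. By Proposition \ref{pList} (4), every initialized aCM bundle $\cE$ with these Chern classes arises as $\cE\cong\fE|_{C}$ for the elliptic normal curve $C=(s)_0$ of a general section; the stable ones therefore all lie in the image of $m$, so the stable locus $\cM_F^{s,U}$ is dominated by the irreducible $\calH$ and is irreducible. Together with the single point representing the $S$--equivalence class of all strictly semistable bundles (computed just before the statement via $\gr(\cE)\cong\cO_F(2h_1+h_3)\oplus\cO_F(2h_2+h_3)$), this shows $\cM_F^{ss,U}(2;2h,2h_2h_3+2h_1h_3+4h_1h_2)$ is irreducible: its closure contains the strictly semistable point as a limit of stable bundles. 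Smoothness of the stable locus follows as in Proposition \ref{pModuliRationalElliptic}, namely from $h^2\big(F,\cE\otimes\cE^\vee\big)=0$ (Lemma \ref{lObstructed}) via Corollary 4.5.2 of \cite{H--L}, and its dimension is $h^1\big(F,\cE\otimes\cE^\vee\big)$. A stable bundle is simple, so this equals $1-\chi(\cE\otimes\cE^\vee)$, and the same Riemann--Roch computation as before ($c_1(\cE\otimes\cE^\vee)=0$, $c_2(\cE\otimes\cE^\vee)=4c_2-c_1^2$, $c_3=0$) gives dimension $5$.

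The delicate point is the very last assertion, that the complement of the stable locus inside $\cM_F^{ss,U}$ \emph{consists in exactly one point}. I already know there is one $S$--equivalence class of strictly semistable bundles, but I must check that this point genuinely lies in the moduli space as a limit of the irreducible stable family (so that $\cM_F^{ss,U}$ is irreducible rather than having an isolated extra point), and conversely that no stable bundle maps to it. The first is handled by constructing an explicit flat degeneration: I would exhibit a one--parameter family of extensions of $\cO_F(2h_2+h_3)$ by $\cO_F(2h_1+h_3)$ whose general member is a stable Ulrich bundle (non--split extensions, by Proposition \ref{pStable} these are exactly the strictly semistable ones, but a generic global deformation inside $\cM_F^{ss,U}$ smooths them to stable bundles) and whose special fibre is $\gr(\cE)$; since $\ext^1_F\big(\cO_F(2h_2+h_3),\cO_F(2h_1+h_3)\big)$ is computed as in Lemma \ref{lExtension} to be nonzero, such deformations exist and the strictly semistable point is a genuine limit of stable points, hence lies in $\overline{m(\calH)}$. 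The second is immediate since stability is an open condition and the strictly semistable point is by definition not stable. Combining, $\cM_F^{ss,U}$ is irreducible of dimension $5$, its stable locus is the smooth irreducible open part described above, and the complementary locus is precisely the single strictly semistable point.
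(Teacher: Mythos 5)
Your overall framework (the morphism $m\colon\calH\to\cM_F^{ss,U}(2;2h,2h_2h_3+2h_1h_3+4h_1h_2)$ from the $16$--dimensional component $\calH\subseteq\Hilb_{8t}^{sm,nd}(F)$ via Theorem \ref{tFamily}, surjectivity via Proposition \ref{pList}, the Jordan--H\"older analysis giving a single strictly semistable point, and the smoothness/dimension computation via Lemma \ref{lObstructed} and Riemann--Roch) matches the paper's. But there is a genuine gap at exactly the point you call delicate, and your fix does not work. The real difficulty in this case --- the one that distinguishes it from Proposition \ref{pModuliRationalElliptic} --- is not whether the strictly semistable point lies in the closure of the stable locus (that is automatic once the moduli space is irreducible, which follows from surjectivity of $m$, provided the stable locus is non--empty and open); it is whether stable bundles with these Chern classes exist \emph{at all}. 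A priori every semistable bundle with $c_1=2h$ and $c_2=2h_2h_3+2h_1h_3+4h_1h_2$ could be strictly semistable, in which case $\cM_F^{ss,U}$ would reduce to a single point and the dimension--$5$ claim would be false. Your argument for existence is circular: you propose a one--parameter family of extensions of $\cO_F(2h_2+h_3)$ by $\cO_F(2h_1+h_3)$ ``whose general member is a stable Ulrich bundle'', but by Proposition \ref{pStable} \emph{every} bundle fitting into such an extension, split or not, is strictly semistable --- none is stable, so no family of such extensions can have stable members. Your parenthetical fallback, that a generic deformation inside $\cM_F^{ss,U}$ smooths them to stable bundles, assumes precisely what must be proven, namely that stable points of this moduli space exist.

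The paper closes this gap with a dimension count on the fibre of $m$ over the strictly semistable point. The strictly semistable bundles form a family over $B:=\bP\big(\ext^1_F\big(\cO_F(2h_2+h_3),\cO_F(2h_1+h_3)\big)\big)\cong\p2$, and for a fixed such bundle $\cE$ the curves of $\calH$ mapping to its $S$--equivalence class are dominated by $\bP\big(H^0\big(F,\cE\big)\big)\cong\p{11}$, since $h^0\big(F,\cE\big)=\rk(\cE)\deg(F)=12$ for an Ulrich bundle of rank $2$. Hence the fibre $X$ of $m$ over the unique strictly semistable point satisfies $\dim(X)\le 2+11=13<16=\dim(\calH)$, so some point of $\calH$ must map to a point representing a stable bundle. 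Once existence of stable bundles is established this way, the rest of your argument (irreducibility from surjectivity of $m$, density of the open stable locus, smoothness and dimension $5$ via Lemma \ref{lObstructed} and Riemann--Roch, and the complement being the single strictly semistable point) goes through as you wrote it.
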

\begin{proof}
We still have a surjective morphism
$$
m\colon\calH\to \cM_F^{ss,U}(2;2h,2h_2h_3+2h_1h_3+4h_1h_2)
$$
where $\calH$ is the component of the Hilbert scheme $\Hilb_{8t}^{sm,nd}(F)$ whose points correspond to curves in the class $2h_2h_3+2h_1h_3+4h_1h_2$. Proposition \ref{pElliptic} guarantees that $\calH$ has dimension $16$. Consider the fibre $X$ over the unique point corresponding to the  $S$--equivalence class of all the strictly semistable bundles. 

Recall that strictly semistable bundles are parameterized by the variety $B:=\bP\big(\ext^1_F\big(\cO_F (2h_2+h_3),\cO_F(2h_1+h_3)\big)\big)\cong\p2$ (see Proposition \ref{pStable}). We have a map over $B$, whose fibre over a point corresponding to the extension 
$$
0\longrightarrow\cO_F(2h_1+h_3)\longrightarrow\cE\longrightarrow\cO_F (2h_2+h_3)\longrightarrow0
$$
is dominated by $\bP(H^0\big(F,\cE\big))$. Since $\cE$ is Ulrich, it follows that $h^0\big(F,\cE\big)=12$, hence $\dim(X)\le13$. In particular there exist points in $\calH$ which are mapped on points representing stable bundles.

Repeating almost verbatim the argument used in the proof of Proposition \ref{pModuliRationalElliptic} we are able to complete the proof of the statement.
\end{proof}

\section{Moduli spaces of non--Ulrich bundles}
\label{sModuliLine}
In this section we will examine the moduli spaces of non--Ulrich semistable bundles $\cE$. 
Thus we start to examine the case of initialized rank $2$ aCM bundles on $F$ with $c_1=h_3$: Proposition \ref{pList} shows that the representative $c_2$ can be assumed to be $h_2h_3$. Recall that such bundles are $\mu$--stable, hence stable, thus their moduli space 
$$
\cM_F^{s, aCM}(2;h_3,h_2h_3),
$$
parameterizing their isomorphisms classes (see \cite{C--H2}), exists. Thanks to  Proposition \ref{pHilbertLine} we know that the locus $\calH\subseteq\Hilb_{t+1}(F)$ corresponding to lines with class $c_2=h_2h_3$ is isomorphic to $\p1\times\p1$, hence rational and smooth.  Moreover $h^i\big(F,\cO_F(-c_1)\big)=0$, $i=1,2$, where $c_1=h_3$.

Thus there exists a flat family $\fE\to\calH$ of vector bundles of rank $2$ with Chern classes $c_1$ and $c_2$ (see Theorem \ref{tFamily}). If the bundle $\cE$ is a fibre of such a family, then it fits into the exact sequence \eqref{seqIdeal} where $c_1=h_3$ and $C\subseteq F$ is a line. Thanks to \cite{C--F--M}, Section 8.2, we know that $\cE$ is also aCM.

Thus the universal property of $\cM_F^{s,aCM}(2;h_3,h_2h_3)$ yields the existence of a well--defined morphism $m\colon\p1\times\p1\to \cM_F^{s,aCM}(2;h_3,h_2h_3)$. The morphism $m$ is surjective thanks to assertion (1) of Proposition \ref{pList}: indeed we have that each initialized, aCM bundle with $c_1=h_3$ and $c_2=h_2h_3$ has a line as zero--locus of its general section, thus it appears as a fibre of the family defined in Theorem \ref{tFamily}. It follows that such a moduli space is irreducible.

Let $\cE$ be one of the bundles we wish to deal with. Being stable such a bundle is simple  (see \cite{H--L}, Corolary 1.2.8), hence $h^0\big(F,\cE\otimes{\cE}^\vee\big)=1$. Let $C$ be the zero--locus of a general section of $\cE$: we already know that $C$ is a line whose class in $A^2(F)$ is $h_2h_3$. Tensoring Sequence \eqref{seqIdeal} by $\cO_F(-h_3)$ and taking its cohomology we obtain $h^i\big(F,\cE(-h_3)\big)=h^i\big(F,\cI_{C\vert F}\big)$. Since $C$ is a line and the embedding $F\subseteq\p7$ is aCM, it is easy to check that $h^i\big(F,\cI_{C\vert F}\big)=0$, $i=0,1,2,3$.

Again the cohomology of the same sequence tensorized by $\cE(-h_3)$ yields $h^i\big(F,\cE\otimes{\cE}^\vee\big)=h^i\big(F,\cE\otimes\cI_{C\vert F}\big)$, $i=0,1,2,3$. In order to compute these last dimensions we can take the cohomology of Sequence \eqref{seqStandard} tensorized by $\cE$. We already know that $h^0\big(F,\cE\otimes\cI_{C\vert F}\big)=h^0\big(F,\cE\otimes{\cE}^\vee\big)=1$. Moreover, since $\cE$ is aCM we obtain
\begin{gather*}
h^1\big(F,\cE\otimes{\cE}^\vee\big)=h^1\big(F,\cE\otimes\cI_{C\vert F}\big)=h^0\big(C,\cE\otimes\cO_{C}\big)-h^0\big(F,\cE\big)+1,\\
h^2\big(F,\cE\otimes{\cE}^\vee\big)=h^2\big(F,\cE\otimes\cI_{C\vert F}\big)=h^1\big(C,\cE\otimes\cO_{C}\big).
\end{gather*}

We know that $\cE\otimes\cO_{C}\cong\cN_{C\vert F}$. The general theory of del Pezzo threefolds implies that $\cN_{C\vert F}$ is either $\cO_{\p1}^{\oplus2}$, or $\cO_{\p1}(-1)\oplus \cO_{\p1}(1)$ (Lemma 3.3.4 of \cite{I--P}). It follows that $h^2\big(F,\cE\otimes{\cE}^\vee\big)=0$ and $h^1\big(F,\cE\otimes{\cE}^\vee\big)=3-h^0\big(F,\cE\big)$.

Again the cohomology of Sequence \eqref{seqIdeal} gives $h^0\big(F,\cE\big)=h^0\big(F,\cO_F\big)+h^0\big(F,\cI_{C\vert F}(h_3)\big)$ and $h^1\big(F,\cI_{C\vert F}(h_3)\big)=0$, because $\cE$ is aCM. Thus the cohomology of Sequence \eqref{seqStandard} yields $h^0\big(F,\cI_{C\vert F}(h_3)\big)=h^0\big(F,\cO_F(h_3)\big)-h^0\big(C,\cO_C(h_3)\big)=1$, because $h_3C=0$, whence $\cO_F(h_3)\cong\cO_F$. It follows that $h^1\big(F,\cE\otimes{\cE}^\vee\big)=1$. We have thus proved that $\cM_F^{s, aCM}(2;h_3,h_2h_3)$ is irreducible and smooth of dimensions $1$. In particular it is rational, thanks to L\"uroth theorem: we will actually check that it is isomorphic to $\p1$.

Now consider a bundle $\cE$ arising from an extension of the form
$$
0\longrightarrow \cO_F(-h_2+h_3)\longrightarrow \cE\longrightarrow \cO_F(h_2)\longrightarrow 0.
$$
Clearly such an $\cE$ is initialized, aCM, with $c_1(\cE)=h_3$ and $c_2(\cE)=h_2h_3$. It is easy to check via a Chern classes computation that if $\cE$ is decomposable then it is $\cO_F(-h_2+h_3)\oplus\cO_F(h_2)$. If this is the case, then the above sequence splits, because there are no non--zero morphisms $\cO_F(-h_2+h_3)\to\cO_F(h_2)$. 

Thus, if we only consider the bundles arising from non--trivial extensions, they are indecomposable. Notice that  $h^1\big(F,\cO_F(-2h_2+h_3)\big)=2$, thus we have a family of non--isomorphic bundles with base $\p1$. In particular $\p1\subseteq\cM_F^{s, aCM}(2;h_3,h_2h_3)$, thus equality must hold.

The duality morphism defined by $\cE\mapsto {\cE}^\vee(h)$ is an isomorphism. Thus the same conclusions hold for bundles with $c_1=2h_1+2h_2+h_3$ whose representative $c_2$ is $2h_2h_3 +h_1h_3 +2h_1h_2$, hence also the moduli space 
$$
\cM_F^{s, aCM}(2;2h_1+2h_2+h_3,2h_2h_3 +h_1h_3 +2h_1h_2)
$$
exists. 

We can summarize the above computations in the following statement.

\begin{proposition}
\label{pRationalLine}
The moduli spaces
$$
\cM_F^{s, aCM}(2;h_3,h_2h_3),\qquad \cM_F^{s, aCM}(2;2h_1+2h_2+h_3,2h_2h_3 +h_1h_3 +2h_1h_2)
$$
are isomorphic to $\p1$. 
\end{proposition}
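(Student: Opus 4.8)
The plan is to exploit the fact that stability has already been settled: Proposition \ref{pOther} shows that every such bundle is $\mu$--stable, hence stable, so the moduli space $\cM_F^{s,aCM}(2;h_3,h_2h_3)$ exists. I would then identify it in four steps: (i) produce a surjection onto it from a rational variety to obtain irreducibility; (ii) compute its tangent space to obtain smoothness and dimension $1$; (iii) upgrade ``smooth rational curve'' to ``$\cong\p1$'' by exhibiting an explicit complete family; and (iv) transfer the conclusion to the second space by Serre duality.

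For (i) and (ii), first I would take the component $\calH\subseteq\Hilb_{t+1}(F)$ of lines in the class $h_2h_3$, which is isomorphic to $\p1\times\p1$ by Proposition \ref{pHilbertLine}. Since $h^i\big(F,\cO_F(-h_3)\big)=0$ for $i=1,2$, Theorem \ref{tFamily} produces a flat family $\fE\to\calH$ of rank $2$ bundles with the prescribed Chern classes, each of which is initialized and aCM; the universal property then yields a morphism $m\colon\p1\times\p1\to\cM_F^{s,aCM}(2;h_3,h_2h_3)$. By Proposition \ref{pList}(1) every such bundle has a line as the zero locus of its general section, so $m$ is surjective and the target is irreducible. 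For the dimension I would compute $h^\bullet\big(F,\cE\otimes\cE^\vee\big)$: since $\cE$ is stable it is simple, so $h^0=1$. Twisting Sequence \eqref{seqIdeal} by $\cE^\vee\cong\cE(-h_3)$ and using the vanishing $h^\bullet\big(F,\cE(-h_3)\big)=h^\bullet\big(F,\cI_{C\vert F}\big)=0$ reduces the computation to $h^\bullet\big(F,\cE\otimes\cI_{C\vert F}\big)$; twisting Sequence \eqref{seqStandard} by $\cE$ then reduces it to the cohomology of $\cN_{C\vert F}\cong\cE\otimes\cO_C$ on the line $C$. Since the normal bundle of a line is either $\cO_{\p1}^{\oplus2}$ or $\cO_{\p1}(-1)\oplus\cO_{\p1}(1)$ (Lemma 3.3.4 of \cite{I--P}), this forces $h^2\big(F,\cE\otimes\cE^\vee\big)=0$ and $h^1\big(F,\cE\otimes\cE^\vee\big)=1$. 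Thus the moduli space is smooth of dimension $1$, and being dominated by the rational variety $\p1\times\p1$ it is unirational, hence rational by L\"uroth's theorem.

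The hard part will be step (iii): a smooth irreducible quasi--projective rational curve need not be $\p1$, and completeness of the moduli space is not available a priori, so (i)--(ii) alone do not suffice. To pin this down I would construct bundles directly as non--split extensions
$$
0\longrightarrow\cO_F(-h_2+h_3)\longrightarrow\cE\longrightarrow\cO_F(h_2)\longrightarrow0.
$$
Each such $\cE$ is initialized, aCM, with $c_1(\cE)=h_3$ and $c_2(\cE)=h_2h_3$, and a Chern class computation shows that a decomposable $\cE$ would have to be $\cO_F(-h_2+h_3)\oplus\cO_F(h_2)$, which splits the sequence; hence every non--trivial extension is indecomposable, so stable. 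Since $\dim\big(\ext^1_F\big(\cO_F(h_2),\cO_F(-h_2+h_3)\big)\big)=h^1\big(F,\cO_F(-2h_2+h_3)\big)=2$, these bundles are parameterized by $\p1$, and distinct extension classes give non--isomorphic (hence non--$S$--equivalent) bundles. This yields an injective morphism $\p1\to\cM_F^{s,aCM}(2;h_3,h_2h_3)$; as the target is irreducible of dimension $1$ and $\p1$ is complete, the image is closed and therefore equal to the whole space, and a bijective morphism from $\p1$ onto a smooth curve is an isomorphism in characteristic $0$.

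Finally, for (iv) I would use that $\cE\mapsto\cE^\vee(h)$ carries initialized aCM rank $2$ bundles with $c_1=h_3$ to those with $c_1=2h-h_3=2h_1+2h_2+h_3$, transforming the representative $c_2=h_2h_3$ into $2h_2h_3+h_1h_3+2h_1h_2$ and preserving stability. This is an involutive isomorphism of moduli spaces, so
$$
\cM_F^{s,aCM}(2;2h_1+2h_2+h_3,2h_2h_3+h_1h_3+2h_1h_2)\cong\cM_F^{s,aCM}(2;h_3,h_2h_3)\cong\p1
$$
as well.
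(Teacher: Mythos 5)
Your proposal is correct and follows essentially the same path as the paper: irreducibility via the family over the component $\calH\cong\p1\times\p1$ of $\Hilb_{t+1}(F)$ from Theorem \ref{tFamily}, smoothness and dimension $1$ via $h^1\big(F,\cE\otimes\cE^\vee\big)=1$ computed through Sequences \eqref{seqIdeal} and \eqref{seqStandard} and the normal bundle of a line, the explicit $\p1$ of extensions $0\to\cO_F(-h_2+h_3)\to\cE\to\cO_F(h_2)\to0$ to pin down the isomorphism type, and the duality $\cE\mapsto\cE^\vee(h)$ for the second space. The only elided detail is the intermediate computation $h^0\big(F,\cE\big)=2$ (via $h^0\big(F,\cI_{C\vert F}(h_3)\big)=1$), which is needed to get $h^1\big(F,\cE\otimes\cE^\vee\big)=3-h^0\big(F,\cE\big)=1$; the paper carries this out explicitly.
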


Now consider the case of an initialized, indecomposable, aCM bundle $\cE$ of rank $2$ with $c_1(\cE)=0$ and $c_2(\cE)=h_2h_3$. Such bundles are never semistable, though $\mu$--semistable (see Proposition \ref{pLine}). It is well--known that it is possible to construct the moduli space $\cM_F^{\mu ss}(2;0,h_2h_3)$ parameterizing $S$-equivalence classes of $\mu$--semistable rank $2$ vector bundles with fixed Chern classes $c_1=0$ and $c_2=h_2h_3$ (see Section 5 of \cite{G--T}): we will denote by $\cM_F^{\mu ss, aCM}(2;0,h_2h_3)$ the locus of aCM ones.

Looking at the cohomology of Sequence \eqref{seqIdeal}, we know that $h^0\big(F,\cE\big)=1$ and each non--zero section of $\cE$ vanishes along the same line $C$ which is the complete intersection inside $F$ of two divisors $D_2\in\vert h_2\vert$ and $D_3:=\vert h_3\vert$.

Sequence \eqref{seqIdeal} implies the existence of a filtration $0\subseteq\cO_F\subseteq\cE$ with $\cE/\cO_F\cong\cI_{C\vert F}$. We claim that such a filtration is actually the Jordan--H\"older filtration of $\cE$ with respect to the $\mu$--semistability notion. 
On the one hand we know that $\mu(\cE)=0$ and it is trivial that $\mu(\cO_F)=0$, hence $\mu(\cE/\cO_F)=0$. On the other hand the $\mu$--stability of  $\cO_F$ and $\cE/\cO_F\cong\cI_{C\vert F}$ is well--known. In particular
$$
\gr(\cE)= \cO_F\oplus\cE/\cO_F\cong\cO_F\oplus\cI_{C\vert F}.
$$

Let $\cE'$ be another bundle with the same properties and let $C'$ be the zero--locus of any non--zero section of $\cE'$: we can write $C'=D_2'\cap D_3'$ with $D_2'\in\vert h_2\vert$ and $D_3':=\vert h_3\vert$. Let us look at the group $G:=\PGL_2\times\PGL_2$ as the  subgroup of the automorphism group of $F$ acting on the second and third factor of $F\cong\p1\times\p1\times\p1$. The divisors $D_i$ and $D_i'$ are inverse images of suitable points with respect to the projections $\pi_i$. it follows the existence of an element of $G$ transforming $C$ into $C'$. Its induced action on $\cO_F$ gives an isomorphism $\cI_{C\vert F}\cong\cI_{C'\vert F}$ as $\cO_F$--modules. It follows that $\gr(\cE)\cong\gr(\cE')$, i.e. all the bundles we are interested in belong to the same $S$--equivalence class with respect to $\mu$--semistability. 

\begin{proposition}
\label{pMuRationalLine}
The moduli space
$$
\cM_F^{\mu ss, aCM}(2;0,h_2h_3)
$$
reduces to a single point.
\end{proposition}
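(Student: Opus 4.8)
The plan is to exploit the fact that a point of $\cM_F^{\mu ss, aCM}(2;0,h_2h_3)$ is an $S$--equivalence class, and that such a class is completely determined by the isomorphism type of the associated graded object $\gr(\cE)$ of a Jordan--H\"older filtration of $\cE$ with respect to $\mu$--semistability. Thus the whole statement reduces to two points: that the moduli space is non--empty, and that $\gr(\cE)$ is the same for every bundle $\cE$ under consideration. Non--emptiness is guaranteed by Proposition \ref{pList} (1), which produces such bundles, and each of them is $\mu$--semistable by Proposition \ref{pLine}.

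First I would fix one such $\cE$ and read off its Jordan--H\"older filtration. Sequence \eqref{seqIdeal} gives an inclusion $\cO_F\subseteq\cE$ with torsion--free quotient $\cI_{C\vert F}$, where $C$ is the common zero--locus of the (one--dimensional space of) sections of $\cE$, a line of class $h_2h_3$. Since $\mu(\cO_F)=\mu(\cI_{C\vert F})=\mu(\cE)=0$ and both $\cO_F$ and $\cI_{C\vert F}$ are $\mu$--stable, this chain is a Jordan--H\"older filtration, and therefore $\gr(\cE)\cong\cO_F\oplus\cI_{C\vert F}$.

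The key step, which I expect to be the main obstacle, is to show that this graded object does not depend on $\cE$, i.e. that $\cI_{C\vert F}\cong\cI_{C'\vert F}$ for the lines $C,C'$ associated to any two such bundles $\cE,\cE'$. Here I would use homogeneity: writing $C=D_2\cap D_3$ with $D_2\in\vert h_2\vert$ and $D_3\in\vert h_3\vert$, the line $C$ is a fibre $\p1\times\{p_2\}\times\{p_3\}$ of the first projection, parameterized by $(p_2,p_3)\in\p1\times\p1$. The subgroup $G:=\PGL_2\times\PGL_2$ of $\mathrm{Aut}(F)$ acting on the last two factors is transitive on such pairs, so some $g\in G$ carries $C$ to $C'$; pulling back the ideal sheaf along the automorphism of $F$ induced by $g$ yields $\cI_{C\vert F}\cong\cI_{C'\vert F}$ as $\cO_F$--modules, whence $\gr(\cE)\cong\gr(\cE')$.

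Combining the three steps, every bundle under consideration belongs to a single $S$--equivalence class, so $\cM_F^{\mu ss, aCM}(2;0,h_2h_3)$ consists of exactly one point.
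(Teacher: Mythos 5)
Your proposal is correct relative to the paper and takes essentially the same route as the paper's own proof: the identical Jordan--H\"older filtration $0\subset\cO_F\subset\cE$ extracted from Sequence \eqref{seqIdeal}, giving $\gr(\cE)\cong\cO_F\oplus\cI_{C\vert F}$, followed by the identical homogeneity argument with $G=\PGL_2\times\PGL_2$ acting on the last two factors of $F$ to identify the graded objects of any two such bundles. Even the one delicate step---asserting that the automorphism $g$ carrying $C$ to $C'$ yields an isomorphism $\cI_{C\vert F}\cong\cI_{C'\vert F}$ as $\cO_F$--modules, when what it literally produces is $\cI_{C\vert F}\cong g^*\cI_{C'\vert F}$---appears in the same form in the paper, so your attempt reproduces the paper's proof essentially word for word.
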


\section{(Uni)rationality of the constructed moduli spaces}
\label{sRational}
In this section we will finally discuss about the (uni)rationality of the moduli spaces constructed in the previous sections.

In Proposition \ref{pRationalLine} we proved that $\cM_F^{s, aCM}(2;h_3,h_2h_3)\cong \cM_F^{s, aCM}(2;2h_1+2h_2+h_3,2h_2h_3 +h_1h_3 +2h_1h_2)\cong\p1$. In particular they are rational.

Let us examine the moduli spaces $\cM_F^{ss,U}(2;h_1+2h_2+3h_3,3h_2h_3+3h_1h_3+h_1h_2)$ and $\cM_F^{ss,U}(2;2h,2h_2h_3+3h_1h_3+3h_1h_2)$. Thanks to the construction described in the previous section, such spaces are dominated by unirational varieties (see Propositions \ref{pRational} and \ref{pElliptic}). We conclude that such two spaces are unirational.

We consider the moduli space $\cM_F^{ss,U}(2;2h,2h_2h_3+2h_1h_3+4h_1h_2)$. The same argument used above implies it is unirational: in what follows we will show it is actually rational.

We consider the projection $\pi\colon F\to Q:=\p1\times\p1$ onto the two first factors and the moduli space $\cM_Q^{\mu s}(2;0,2)$ of $\mu$--stable rank $2$ vector bundles $\cF$ on $Q$ with Chern classes $c_1(\cF)=0$ and $c_2(\cF)=2$ which has been studied in \cite{Sob}: in particular it is irreducible, of dimension $5$ and rational. 

We denote by $C$ a general plane section of  $Q$ embedded in $\p3$ via the Segre map (or, in other words, a general divisor of bidegree $(1,1)$ on $Q$): notice that $C$ is a smooth conic, thus $C\cong\p1$. Each bundle $\cF$ representing a point in $\cM_Q^{\mu s}(2;0,2)$ is stable and normalized. It follows that $h^0\big(Q,\cF(tC)\big)=0$, $t\le -1$. For each such an $\cF$, we define $e(\cF):=\pi^*\cF\otimes\cO_F(h)$.

\begin{lemma}
If $\cF\in\cM_Q^{\mu s}(2;0,2)$, then $e(\cF)\in\cM_F^{ss,U}(2;2h,2h_2h_3+2h_1h_3+4h_1h_2)$.
\end{lemma}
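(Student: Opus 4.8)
The plan is to show that $e(\cF)$ is an Ulrich bundle on $F$ with the prescribed Chern classes; since Ulrich bundles are automatically semistable (Theorem 2.9 of \cite{C--H2}), this immediately places $e(\cF)$ in the asserted moduli space. As a preliminary I would record the Chern classes. Writing $h=h_1+h_2+h_3$ and using $c_1(\cF)=0$, $c_2(\cF)=2$ together with $\pi^*(\ell_1\ell_2)=h_1h_2$, the twist formulas for a rank $2$ bundle give $c_1(e(\cF))=2c_1(\cO_F(h))=2h$. Since $(h_1+h_2+h_3)^2=2h_1h_2+2h_1h_3+2h_2h_3$ in $A(F)$, one finds $c_2(e(\cF))=c_2(\pi^*\cF)+c_1(\pi^*\cF)h+h^2=2h_1h_2+(h_1+h_2+h_3)^2=4h_1h_2+2h_1h_3+2h_2h_3$, which is exactly the representative $c_2=2h_2h_3+2h_1h_3+4h_1h_2$ of the target moduli space.

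Next I would establish the Ulrich property through the cohomological characterization of \cite{E--S--W}: a rank $2$ bundle $\cG$ on the threefold $F\subseteq\p7$ is Ulrich if and only if $H^\bullet\big(F,\cG(-ph)\big)=0$ for $p=1,2,3$. Viewing $F$ as the product $Q\times\p1$ of the first two factors with the third, with projections $\pi$ and $\rho\colon F\to\p1$, one has $\cO_F(h_3)\cong\rho^*\cO_{\p1}(1)$ and $\pi^*\cO_Q(C)\cong\cO_F(h_1+h_2)$, so that $e(\cF)(-ph)\cong\pi^*\big(\cF((1-p)C)\big)\otimes\rho^*\cO_{\p1}(1-p)$. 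By the Künneth formula its cohomology splits as $\bigoplus_{a+b=i}H^a\big(Q,\cF((1-p)C)\big)\otimes_k H^b\big(\p1,\cO_{\p1}(1-p)\big)$. For $p=2$ the factor $H^\bullet\big(\p1,\cO_{\p1}(-1)\big)$ vanishes identically, so all cohomology vanishes for free; for $p=1$ and $p=3$ the $\p1$--factor is one--dimensional and concentrated in degree $0$ and degree $1$ respectively, so the problem reduces to the vanishing of $H^\bullet(Q,\cF)$ and of $H^\bullet(Q,\cF(-2C))$.

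These last vanishings I would deduce from the data already in hand. Since $\cF$ is stable of rank $2$ with $c_1(\cF)=0$ one has $\cF^\vee\cong\cF$ and $H^0(Q,\cF)=0$; by Serre duality on $Q$, using $\omega_Q\cong\cO_Q(-2C)$, one gets $H^2(Q,\cF)\cong H^0(Q,\cF(-2C))^\vee=0$ (the latter by the hypothesis $h^0(Q,\cF(tC))=0$ for $t\le-1$), and Riemann--Roch gives $\chi(\cF)=0$, whence $H^1(Q,\cF)=0$. For $\cF(-2C)$ the same hypothesis yields $H^0(Q,\cF(-2C))=0$, Serre duality together with $\cF^\vee\cong\cF$ gives $H^2(Q,\cF(-2C))\cong H^0(Q,\cF)^\vee=0$, and the Riemann--Roch computation $\chi(\cF(mC))=2m^2+4m$ gives $\chi(\cF(-2C))=0$, so $H^1(Q,\cF(-2C))=0$ as well. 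This completes the verification that $e(\cF)$ is Ulrich, hence semistable, hence a point of $\cM_F^{ss,U}(2;2h,2h_2h_3+2h_1h_3+4h_1h_2)$.

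I expect the only delicate point to be the bookkeeping rather than any deep input: correctly identifying $e(\cF)(-ph)$ as an external product on $Q\times\p1$ and applying Künneth, and keeping track of the self--duality $\cF^\vee\cong\cF$ in the Serre--duality steps. Everything else is a routine Riemann--Roch and stability computation, and the reduction to the three twists $p=1,2,3$ avoids having to check the aCM condition for all $t\in\bZ$ by hand.
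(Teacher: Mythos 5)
Your proof is correct, but it takes a genuinely different route from the paper's. The paper proves that $e(\cF)$ is aCM directly: since $c_1(e(\cF))=2h$, Serre duality reduces the intermediate cohomology to $i=1$, and the paper then shows $h^1\big(F,e(\cF)(th)\big)=0$ for \emph{every} $t\in\bZ$ via K\"unneth; this forces an analysis of $h^1\big(Q,\cF(tC)\big)$ for all $t$, carried out by restricting $\cF$ to the conic $C$, bounding the splitting type $\cF_C\cong\cO_{\p1}(-a)\oplus\cO_{\p1}(a)$ by $a\le 1$ using Theorem 4.6 of \cite{Mar}, and chasing the restriction sequences $0\to\cF((t-1)C)\to\cF(tC)\to\cF_C(tC)\to0$. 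You instead invoke the finite cohomological characterization of Ulrich bundles on a threefold (all cohomology of the three twists $e(\cF)(-h)$, $e(\cF)(-2h)$, $e(\cF)(-3h)$ must vanish), which is indeed available in \cite{E--S--W} and stated for sheaves in \cite{C--H2}, so only three twists need checking; K\"unneth kills the twist $p=2$ for free (the $\p1$--factor is $\cO_{\p1}(-1)$) and reduces $p=1,3$ to the vanishing of $H^\bullet\big(Q,\cF\big)$ and $H^\bullet\big(Q,\cF(-2C)\big)$, which you get from stability, the self--duality $\cF^\vee\cong\cF$, Serre duality and Riemann--Roch. Your route is shorter and more elementary in its inputs, avoiding Maruyama's restriction theorem and the splitting-type analysis entirely; the paper's route establishes the aCM property twist by twist and yields along the way the extra information $h^1\big(Q,\cF(tC)\big)=0$ for all $t\in\bZ$. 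Both arguments end identically: the Chern class computation (which you do correctly) fixes the invariants, and Ulrich implies semistable by Theorem 2.9 of \cite{C--H2}, placing $e(\cF)$ in $\cM_F^{ss,U}(2;2h,2h_2h_3+2h_1h_3+4h_1h_2)$.
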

\begin{proof}
An easy Chern class computation shows that $c_1(e(\cF))=2h$ and $c_2(e(\cF))=2h_2h_3+2h_1h_3+4h_1h_2$. Thus, if we show that $e(\cF)$ is aCM we are done, because aCM bundles on $F$ with those Chern classes are automatically Ulrich and semistable. We will compute the intermediate cohomology of $e(\cF)$ by using K\"unneth formula and projection formula. We have 
\begin{equation}
\label{Kunneth}
h^i\big(F,e(\cF)(th)\big)=\sum_{j=0}^ih^j\big(Q,\cF(tC)\big)h^{i-j}\big(\p1,\cO_{\p1}(t)\big).
\end{equation}
Since $c_1(e(\cF))=2h$, it follows from Serre's duality that it suffices to consider only the case $i=1$. If $t=-1$ then it becomes 
\begin{align*}
h^1\big(F,e(\cF)(-h)\big)&=h^0\big(Q,\cF(-C)\big)h^1\big(\p1,\cO_{\p1}(-1)\big)+\\
&+h^1\big(Q,\cF(-C)\big)h^0\big(\p1,\cO_{\p1}(-1)\big)=0.
\end{align*}

Let now examine the case $t\ne-1$. Let  $\cF_C:=\cF\otimes\cO_C$: the isomorphism $C\cong\p1$ implies that $\cF_C\cong\cO_{\p1}(-a)\oplus\cO_{\p1}(a)$ for some $a\ge 0$. Assume $a\ge 1$: thus the Harder--Narasimhan filtration of $\cF_C$ is $0\subseteq \cO_{\p1}(a)\subseteq \cF_C$ and $\cF_C/\cO_{\p1}(a)\cong\cO_{\p1}(-a)$, thus $0<2a\le 2$ thanks to Theorem 4.6 of \cite{Mar}. We conclude that  $0\le a\le1$. In both the cases $h^1\big(C,\cF_C(-C)\big)=h^0\big(C,\cF_C\big)=2$. For each $t\in\bZ$ consider the restriction sequence
\begin{equation}
\label{seqRestr}
0\longrightarrow\cF((t-1)C)\longrightarrow\cF(tC)\longrightarrow\cF_C(tC)\longrightarrow 0.
\end{equation}

The bundle $\cF$ is stable on $Q$, hence Serre's duality implies $h^2\big(Q,\cF(-C)\big)=h^0\big(Q,\cF(-C)\big)=0$ and $h^2\big(Q,\cF(-2C)\big)=h^0\big(Q,\cF\big)=0$. 

From the former vanishing, Riemann--Roch theorem on $Q$ and Serre's duality, we deduce $h^1\big(Q,\cF(-C)\big)=-\chi(\cF(-C))=2$. 

The cohomology of Sequece \eqref{seqRestr} with $t=-1$ gives
$$
0\longrightarrow H^1\big(Q,\cF(-2C)\big)\longrightarrow H^1\big(Q,\cF(-C)\big)\longrightarrow  H^1\big(Q,\cF_C(-C)\big)\longrightarrow 0.
$$
Thus the latter vanishing, the equalities $h^1\big(Q,\cF(-C)\big)=h^1\big(C,\cF_C(-C)\big)=2$ and Serre's duality yield $h^1\big(Q,\cF\big)=h^1\big(Q,\cF(-2C)\big)=0$.

Since $h^0\big(C,\cF_C(tC)\big)=0$ for each $t\le -2$, it follows that the map $H^1\big(Q,\cF((t-1)C)\big)\to H^1\big(Q,\cF(tC)\big)$ is injective, thus $h^1\big(Q,\cF(tC)\big)=0$ in the same range.

Since $h^1\big(C,\cF_C(tC)\big)=0$ for each $t\ge0$, it follows that the map $H^1\big(Q,\cF((t-1)C)\big)\to H^1\big(Q,\cF(tC)\big)$ is surjective, thus again $h^1\big(Q,\cF(tC)\big)=0$ in the same range.

Equality \eqref{Kunneth} for $i=1$ and $t\ne -1$ becomes
$$
h^1\big(F,e(\cF)(th)\big)=h^0\big(Q,\cF(tC)\big)h^1\big(\p1,\cO_{\p1}(t)\big).
$$
On the one hand we already know that $h^0\big(Q,\cF(tC)\big)=0$, $t\le -2$. On the other hand $h^1\big(\p1,\cO_{\p1}(t)\big)=0$, $t\ge0$. 

We conclude that $h^1\big(F,e(\cF)(th)\big)=0$ for each $t\in\bZ$.
\end{proof}

The above proposition gives the existence of a well--defined map
$$
e\colon \cM_Q^{\mu s}(0,2)\to\cM_F^{ss,U}(2;2h,2h_2h_3+2h_1h_3+4h_1h_2).
$$
The morphism $\pi$ has a section $\sigma\colon Q\to F$. In particular $\sigma^*\pi^*=(\pi\sigma)^*$ is the identity, whence we deduce the injectivity of $e$. Since both the spaces has dimension $5$, we conclude that $e$ is obviously dominant.

\begin{proposition}
The moduli space $\cM_F^{ss,U}(2;2h,2h_2h_3+2h_1h_3+4h_1h_2)$ is rational.
\end{proposition}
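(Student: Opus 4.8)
The plan is to read off rationality of the target directly from the morphism $e$ constructed above, using that rationality is a birational invariant. All the geometric input is already in place: $e\colon\cM_Q^{\mu s}(0,2)\to\cM_F^{ss,U}(2;2h,2h_2h_3+2h_1h_3+4h_1h_2)$ is a morphism of irreducible varieties, both of dimension $5$, and it has been shown to be injective (via the section $\sigma$ of $\pi$) and dominant. The source $\cM_Q^{\mu s}(0,2)$ is rational by \cite{Sob}. So the whole proof reduces to the single observation that $e$ is birational.

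To establish this I would argue as follows. Since $e$ is dominant it induces an inclusion of function fields $K\big(\cM_F^{ss,U}(2;2h,2h_2h_3+2h_1h_3+4h_1h_2)\big)\hookrightarrow K\big(\cM_Q^{\mu s}(0,2)\big)$, and because source and target have the same dimension this is a finite extension. Over an algebraically closed field of characteristic $0$ such an extension is separable, so its degree equals the number of points of a general fibre of $e$. The injectivity of $e$ forces a general (indeed every) fibre to consist of a single point, hence the extension has degree $1$ and $e$ is birational. Transporting the rationality of $\cM_Q^{\mu s}(0,2)$ across this birational equivalence then yields the rationality of $\cM_F^{ss,U}(2;2h,2h_2h_3+2h_1h_3+4h_1h_2)$, which is the assertion.

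The only delicate point is the passage from injectivity to birationality, and it is precisely here that the characteristic $0$ assumption does the work: in positive characteristic a relative Frobenius provides injective, dominant, equidimensional morphisms that fail to be birational, so separability of the function field extension cannot be dispensed with. Everything else — the construction of $e$, its injectivity, the dimension count, and the rationality of the source — is either already established in the preceding discussion or quoted from \cite{Sob}, so the remaining argument is essentially formal.
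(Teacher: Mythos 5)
Your proposal is correct and follows essentially the same route as the paper: the paper likewise deduces birationality of $e$ from its injectivity together with the equality $\dim(\cM_Q^{\mu s}(2;0,2))=\dim(\cM_F^{ss,U}(2;2h,2h_2h_3+2h_1h_3+4h_1h_2))=5$, and then transports the rationality of $\cM_Q^{\mu s}(2;0,2)$ proved in \cite{Sob}. The only difference is that you spell out the step the paper leaves implicit — that in characteristic $0$ the induced finite extension of function fields is separable, so its degree is the cardinality of a general fibre, which injectivity forces to be $1$ — which is a welcome justification but not a different argument.
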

\begin{proof}
Since $e$ is injective and
$$
\dim(\cM_Q^{\mu s}(2;0,2))=\dim(\cM_F^{ss,U}(2;2h,2h_2h_3+2h_1h_3+4h_1h_2))=5.
$$
it follows that $e$ is birational. Since the rationality of $\cM_Q^{\mu s}(2;0,2)$ has been proved in \cite{Sob}, we deduce that $\cM_F^{ss,U}(2;2h,2h_2h_3+2h_1h_3+4h_1h_2)$ is rational too.
\end{proof}

\begin{remark}  
In  \cite{LP} J. Le Potier analyzes the restriction to the quadric $Q$ of the null correlation bundles $\mathcal N$. We denote by $\cM_{\p3}^{ss}(2;0,1)$ the moduli space of normalized semistable bundles of rank $2$ on $\p3$ with $c_2=1$. Let $\cM_{\p3}^{ss,0}(2;0,1)$ be the open subset of $\cM_{\p3}^{ss}(2;0,1)$ consisting of all bundles $\mathcal N$ such that $\mathcal N\otimes\cO_Q$ is stable on $Q$. 

The restriction gives an \'etale quasi--finite morphism from $\cM_{\p3}^{ss,0}(2;0,1)$ onto an
open proper subset $\mathcal U \subset\cM_Q^{\mu s}(2;0,2)$. The generic bundle $\cE$ of $\mathcal U$ has a twin pair (a Tjurin pair) of null correlation bundles restricting to it, while there are bundles $\cE$ in $\mathcal U$ with a unique null correlation bundle
restricting to it. 

Example 4.2  of \cite{M--R} shows the existence of a bundle in  $\cM_Q^{\mu s}(2;0,2)$ but not in $\mathcal U$, i.e. a stable bundle which is not the restriction of a null correlation bundle. The pull--back of this bundle gives an example of Ulrich bundle $\cE$ on $F$ with $c_1(\cE)=2h$ and $c_2(\cE)=2h_2h_3+2h_1h_3+4h_1h_2$ which is not related, under this construction to an instanton bundle on $\p3$. By the way, if $\cE$ is a generic bundle on $\cM_F^{ss,U}(2;2h,2h_2h_3+2h_1h_3+4h_1h_2)$ then $\cE(-h)$ can be obtained from a pair of null correlation bundles, hence $\cE(-h)$ is the homology of the monad
$$
0\longrightarrow\cO_F(-h_1-h_2)\longrightarrow\cO_F^{\oplus4}\longrightarrow\cO_F(h_1+h_2)\longrightarrow 0.
$$
   \end{remark}

\bigskip
\noindent
Gianfranco Casnati,\\
Dipartimento di Scienze Matematiche, Politecnico di Torino,\\
c.so Duca degli Abruzzi 24,\\
10129 Torino, Italy\\
e-mail: {\tt gianfranco.casnati@polito.it}

\bigskip
\noindent
Daniele Faenzi,\\
UFR des Sciences et des Techniques, Universit\'e de Bourgogne,\\
9 avenue Alain Savary -- BP 47870\\
21078 DIJON CEDEX, France\\
e-mail: {\tt daniele.faenzi@univ-pau.fr}

\bigskip
\noindent
Francesco Malaspina,\\
Dipartimento di Scienze Matematiche, Politecnico di Torino,\\
c.so Duca degli Abruzzi 24,\\
10129 Torino, Italy\\
e-mail: {\tt francesco.malaspina@polito.it}

\end{document}